\newtheorem{theorem}{Theorem}[section]
\theoremstyle{plain}
\newtheorem{definition}[theorem]{Definition}
\newtheorem{prop}[theorem]{Proposition}
\newtheorem{cor}[theorem]{Corollary}
\newtheorem{lemma}[theorem]{Lemma}
\newtheorem{thm}[theorem]{Theorem}
\theoremstyle{definition}
\newtheorem{notation}{Notation}
\theoremstyle{remark}
\newtheorem{remark}[theorem]{Remark}
\newcommand{\OA}{\mathcal{O}_{A}}
\newcommand{\OC}{\mathcal{O}}
\newcommand{\A}{\mathscr{A}}
\newcommand{\M}{\mathscr{M}}
\newcommand{\LL}{\mathcal{L}}
\newcommand{\V}{\mathcal{V}}
\newcommand{\E}{\mathcal{E}}
\newcommand{\G}{\mathcal{G}}
\newcommand{\ZZ}{\mathcal{Z}}
\newcommand{\PP}{\mathcal{P}}
\newcommand{\W}{\mathcal{W}}
\newcommand{\N}{\mathbb{N}}
\newcommand{\Z}{\mathbb{Z}}
\newcommand{\C}{\mathbb{C}}
\newcommand{\K}{\mathbb{K}}
\newcommand{\VV}{\mathbb{V}}
\newcommand{\g}{\mathfrak{g}}
\newcommand{\lb}{\mathfrak{b}}
\newcommand{\h}{\mathfrak{h}}
\newcommand{\p}{\mathfrak{p}}
\newcommand{\lam}{{\lambda}}
\newcommand{\Hom}{\mathrm{Hom}}
\newcommand{\End}{\mathrm{End}}
\newcommand{\Ext}{\mathrm{Ext}}
\numberwithin{equation}{section}
\begin{document}

\title{Tilting modules in category $\OC$ and sheaves on moment graphs}

%    Information for first author
\author{Johannes K\"ubel*}
%    Address of record for the research reported here
\address{Department of Mathematics, University of Erlangen, Germany}
%    Current address
\curraddr{Cauerstr. 11, 91058 Erlangen, Germany}
\email{kuebel@mi.uni-erlangen.de}
%    \thanks will become a 1st page footnote.
\thanks{*supported by the DFG priority program 1388}

%    General info

\date{\today}

\keywords{representation theory, category $\OC$}

\begin{abstract}
We describe tilting modules of the deformed category $\OC$ over a semisimple Lie algebra as certain sheaves on a moment graph associated to the corresponding block of $\OC$. We prove that they map to Braden-MacPherson sheaves constructed along the reversed Bruhat order under Fiebig's localization functor. By this means, we get character formulas for tilting modules and explain how Soergel's result about the Andersen filtration gives a Koszul dual proof of the semisimplicity of subquotients of the Jantzen filtration.
\end{abstract}

\maketitle

\section{Introduction}

Let $\g \supset \lb \supset \h$ be a complex semisimple Lie algebra with a Borel and a Cartan subalgebra. Let $A$ be the localization of the symmetric algebra $S=S(\h)$ at the maximal ideal of zero. The deformed category $\OC_A$ is the full subcategory of $\g$-$A$-bimodules that are finitely generated over $\g \otimes_\C A$, semisimple over $\h$ and locally finite over $\lb$. $\OC_A$ decomposes into blocks which are parameterized by antidominant weights. For a given antidominant weight $\lam \in \h^*$ the weights involved in the corresponding block are given by the orbit $\W_\lam \cdot \lam$ of $\lam$ under the dot-action of the integral Weyl group corresponding to $\lam$. This combinatorial data defines a graph with $\W_\lam \cdot \lam$ the set of vertices being partially ordered by the Bruhat order on $\W_\lam$ divided by the stabilizer of $\lam$. Two different vertices are linked by an edge if there is a reflection of $\W_\lam$ mapping one vertex to the other. In addition, every edge has a labeling given by the coroot corresponding to the positive root of the according reflection. Denote by $\mathcal{M}_{A,\lam}$ the subcategory of the block corresponding to the antidominant weight $\lam$ consisting of modules which have a Verma flag, i.e., a filtration with subquotients isomorphic to deformed Verma modules. Now the usual duality on category $\OC$ extends to the deformed version $\OC_A$. The modules which are self-dual and admit a Verma flag are called deformed tilting modules. The indecomposable tilting modules are parameterized by their highest weight and $\mathcal{M}_{A,\lam}$ contains those with a highest weight lying in the orbit $\W_\lam \cdot \lam$.\\
While Fiebig shows that indecomposable deformed projective modules of $\mathcal{M}_{A,\lam}$ correspond to Braden-MacPherson sheaves constructed along the Bruhat order of the associated moment graph, we prove in a very similar way that indecomposable deformed tilting modules correspond to Braden-MacPherson sheaves constructed along the reversed order on the moment graph. This approach implies a character formula for tilting modules which was already discovered in \cite{A}. There, Soergel uses a tilting equivalence to trace back character formulas for tilting modules to the known ones for projective modules. Our approach, however, doesn't use the tilting functor but has the disadvantage that it doesn't generalize to Kac-Moody algebras without using the tilting functor.\\
Another application of our result about tilting modules as sheaves on a moment graph is the connection between the Andersen and Jantzen filtration. The Jantzen filtration on a Verma module induces a filtration on the space of homomorphisms from a projective to the Verma module. The Andersen filtration on the space of homomorphisms from a Verma module to a tilting module is constructed in a very similar way as the Jantzen filtration. In \cite{11} we already proved that there is an isomorphism between both spaces of homomorphisms which interchanges the mentioned filtrations.\\
In this paper we describe both Hom-spaces on the level of sheaves on moment graphs. Since the construction of those involves the symmetric algebra $S(\h)$ we discover an inherited grading on both Hom-spaces. Now the advantage of this approach compared to \cite{11} is that we are able to construct an isomorphism which respects the grading and lifts to an isomorphism on the Hom-spaces which also interchanges both filtrations. In \cite{14} it is proved that the Andersen filtration coincides with the grading filtration on this Hom-space. Soergel's approach, however, is Koszul dual to \cite{1} and in combination with our result, leads to another proof of the semisimplicity of the Jantzen filtration layers.
\section{Preliminaries}

In this section we repeat results of \cite{7} and \cite{3} about certain sheaves on moment graphs. We mostly follow the lecture notes \cite{10} and \cite{15} which are more introductory to this subject.

\subsection{Moment graphs}

For a vector space $V$ we denote by $S:= S(V)$ the symmetric algebra of $V$ with the usual grading doubled, i.e., $\mathrm{deg} V=2$. Let $\V$ be the set of vertices and $\E$ the set of edges of a finite graph $(\V,\E)$. I.e., $\V$ is a finite set and $\E \subset \PP(\V)$ a subset of the power set of $\V$ with the following property:\\
If $E$ is an element of $\E$, then the cardinality of $E$ is two.

\begin{definition}
An unordered $V$-moment graph $\G=(\V,\E, \alpha)$ is a finite graph $(\V,\E)$ without loops and double edges, which is equipped with a map $\alpha: \E \rightarrow \mathbb{P}(V)$ that associates to any edge $E$ a line $\alpha_E :=\alpha(E)$ in $V$.
\end{definition}

\begin{remark}
The subsets $Y \subset \V \cup \E$ with the property:
$$x \in Y \cap \V \Rightarrow \{ E \in \E \,| \, x \in E\} \subset Y$$
form the open sets of a topology on $\V \cup \E$. By this means, we view $\G$ as a topological space.
\end{remark}

\subsection{Sheaves on moment graphs}

Let $A$ be a commutative $S$-algebra. For $x \in \V$ define $x^{\circ}:= \{x\} \cup \{E \in \E \,| \, x\in E\}$. For a sheaf $\M$ of $A$-modules on the topological space $\G$ the stalks are given by\\
$\M_x = \M(x^{\circ})$ for $x\in \V$ and $\M_E = \M(\{E\})$ for $E \in \E$.\\
We denote by $\rho_{x,E}: \M_x \rightarrow \M_E$ the restriction map for $x \in E$. The sheaf $\M$ is uniquely determined by this data and we can define a sheaf of rings $\A$, namely the \textit{structure sheaf} of $\G$ over $A$, by setting
\begin{itemize}
\item $\A_x =A$  $\forall x \in \V$
\item $\A_E = A / \alpha_E A$  $\forall E \in \E$
\item $\rho_{x,E}: \A_x \rightarrow \A_E$ the quotient map for $x \in E$.
\end{itemize}

By \cite{3} Proposition 1.1, an $\A$-module $\M$ is characterized by a tuple \\
$(\{\M_x\},\{\M_E\},\{\rho_{x,E}\})$ with the properties
\begin{itemize}	
\item $\M_x$ is an $A$-module for any $x \in \V$
\item $\M_E$ is an $A$-module for all $E \in \E$ with $\alpha_E \M_E =0$
\item $\rho_{x,E}: \M_x \rightarrow \M_E$ is a homomorphism of $A$-modules for $x \in \V$, $E \in \E$ with $x \in E$.
\end{itemize}

\begin{remark}
In what follows, we will always work with this characterization of sheaves on the moment graph. If the $S$-algebra $A$ is $S$ itself, we consider all modules as graded $S$-modules and all maps between them as graded homomorphisms of degree zero.\\
To distinguish between the $S$-algebras we are working with we sometimes call the sheaf $\M$ an $A$-sheaf.
\end{remark}

\subsection{Global sections}

Now let $A$ be a localization of $S$ at a prime ideal $\p \subset S$. Denote by $\mathcal{SH}_A(\G)^{f}$ the subcategory of $\A$-modules, such that $\M_x$ is torsion free and finitely generated over $A$ for all $x \in \V$. We denote by $\ZZ = \ZZ_A (\G)$ the global sections $\Gamma(\A)$ of the structure sheaf and call it the \textit{structure algebra of $\G$ over $A$}. By \cite{7} section 2.5. we get $\ZZ :=\ZZ_A(\G) = \{(z_x) \in \prod_{x \in \V} A \, | \, (z_x \equiv z_y \, \mathrm{mod} \, \alpha_E) \, \mathrm{for} \, \{x,y\}=E\}$.

\begin{remark}
In case $A=S$, $\ZZ_S(\G)$ carries a grading induced by $S$. In this case we consider all $\ZZ$-modules as graded modules.
\end{remark}

The functor of global sections

\begin{equation*}
\Gamma: \A- \mathrm{mod} \longrightarrow \ZZ-\mathrm{mod}
\end{equation*}
has a left adjoint, namely the localization functor $\LL$. Denote by $\ZZ_A-\mathrm{mod}^f$ the subcategory of $\ZZ$-modules that are finitely generated and torsion free over $A$. 

\begin{lemma}(\cite{7}, Proposition 3.5.) \label{adj}
The functors $\Gamma$ and $\LL$ induce a pair of adjoint functors

\[
\begin{xy}
\xymatrix{
  \mathcal{SH}_A(\G)^f   \ar@<2pt>[r]^{\Gamma}  &  \ZZ_A-\mathrm{mod}^f  \ar@<2pt>[l]^{\LL}  \\
} 
\end{xy}
\]
and the canonical maps $\Gamma(\M) \rightarrow \Gamma \LL \Gamma (\M)$ and $\LL(M) \rightarrow \LL \Gamma \LL (M)$ are isomorphisms.
\end{lemma}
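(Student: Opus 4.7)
The plan is to construct $\LL$ explicitly via tensor products, establish the adjunction by tensor--hom, and upgrade the triangle identities to isomorphisms using the concrete structure of $\ZZ$. For each vertex $x$, projection onto the $x$-th component gives a ring map $\pi_x : \ZZ \to A$, and the defining congruences of $\ZZ$ show that for each edge $E = \{x,y\}$ the composite $\ZZ \xrightarrow{\pi_x} A \twoheadrightarrow A/\alpha_E A$ is independent of the endpoint, producing a map $\pi_E : \ZZ \to A/\alpha_E A$. For $M \in \ZZ_A-\mathrm{mod}^f$ I would set
\begin{equation*}
\LL(M)_x := A \otimes_\ZZ M, \qquad \LL(M)_E := A/\alpha_E A \otimes_\ZZ M,
\end{equation*}
with $\rho_{x,E}$ induced by $A \twoheadrightarrow A/\alpha_E A$ (quotienting by $A$-torsion at vertex stalks if necessary to stay inside $\mathcal{SH}_A(\G)^f$).

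For adjointness, a morphism $\LL(M) \to \M$ is a family of $A$-linear maps $f_x : A \otimes_\ZZ M \to \M_x$ compatible with the edge restrictions $\rho_{x,E}$. Tensor--hom identifies each $f_x$ with a unique $\ZZ$-linear map $g_x : M \to \M_x$ (with $\M_x$ viewed as a $\ZZ$-module via $\pi_x$), and the edge compatibility translates exactly to $\rho^\M_{x,E} \circ g_x = \rho^\M_{y,E} \circ g_y$ for every $E = \{x,y\}$, i.e.\ to the condition that $(g_x(m))_x$ lies in $\Gamma(\M) \subset \prod_x \M_x$ for every $m \in M$. This yields a natural bijection $\Hom_\A(\LL(M), \M) \cong \Hom_\ZZ(M, \Gamma(\M))$.

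Writing $\eta$ and $\epsilon$ for the unit and counit, the triangle identities give
\begin{equation*}
\Gamma(\epsilon_\M) \circ \eta_{\Gamma(\M)} = \mathrm{id}_{\Gamma(\M)}, \qquad \epsilon_{\LL(M)} \circ \LL(\eta_M) = \mathrm{id}_{\LL(M)},
\end{equation*}
so both canonical maps in the statement are automatically split injections. To upgrade them to isomorphisms it suffices to prove, say, that $\epsilon_{\LL(M)}$ is an isomorphism, since abstractly the four idempotency conditions (unit or counit, applied after either functor) are equivalent. Stalkwise this reduces to showing $A \otimes_\ZZ \Gamma\LL(M) \to A \otimes_\ZZ M$ is bijective, which I would argue by using the explicit description of sections of $\LL(M)$ as $\alpha_E$-congruent tuples together with the torsion-freeness of stalks over $A$ to exhibit the inverse. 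The main obstacle is exactly this last step: idempotency of an adjunction is not automatic, and its proof here must genuinely use the concrete structure of $\ZZ$ as the subalgebra of $\prod_x A$ cut out by the edge congruences, combined with the torsion-freeness hypothesis built into the definitions of $\mathcal{SH}_A(\G)^f$ and $\ZZ_A-\mathrm{mod}^f$.
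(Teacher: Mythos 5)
The first half of your proposal (the construction of $\LL$ and the adjunction) is essentially sound and close to the construction the paper recalls from Fiebig: your $A\otimes_{\ZZ,\pi_x}M$ agrees with the paper's $\LL(M)_x=e_xM\subset M\otimes_AQ$ after killing $A$-torsion (the kernel of $A\otimes_{\ZZ,\pi_x}M\to e_xM$ is exactly the torsion submodule, using that $M$ embeds in $M\otimes_AQ$), and the tensor--hom computation does yield $\Hom_{\A}(\LL(M),\M)\cong\Hom_{\ZZ}(M,\Gamma(\M))$. Two caveats: once you pass to torsion-free quotients at the vertices you must also adjust the edge stalks, since the restriction maps need not descend; and you should verify that your functor coincides with the $\LL$ described in the paper via $e_xM$ and the push-out over $M(E)=(e_x+e_y)M+\alpha e_xM$, because the later arguments (Proposition \ref{B}, Lemma \ref{A}, the identification of $\ker\rho_{x,E}$ with elements of $\VV K(E)$) use that explicit form. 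Note also that the paper gives no proof of this lemma -- it is quoted from Fiebig -- so your argument has to stand entirely on its own.

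The genuine gap is the second assertion, which is the actual content of the statement. Your reduction to ``$\epsilon_{\LL(M)}$ is an isomorphism'' is legitimate (the equivalence of the idempotency conditions is a standard, though not purely formal, fact: from $\epsilon_{\LL(M)}$ invertible for all $M$ the monad $\Gamma\LL$ is idempotent, whence $\eta_{\Gamma\LL(M)}=\Gamma\LL(\eta_M)$, and then $\eta_{\Gamma(\M)}$ is a two-sided inverse of $\Gamma(\epsilon_{\M})$), but at the decisive point you only say you ``would argue by using the explicit description \dots to exhibit the inverse''. That is exactly where the structure of $\ZZ$ and of $\LL$ enters and where all the work of the lemma lies; beyond it, only the split injectivity has been established, and that is automatic for any adjunction. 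Concretely, what must be proved is: (i) $e_x\Gamma\LL(M)=e_xM$ inside $M\otimes_AQ$ for every vertex $x$ -- note that your formulation ``$A\otimes_{\ZZ}\Gamma\LL(M)\to A\otimes_{\ZZ}M$ is bijective'' is not quite the right statement because both sides may have torsion; and (ii) an isomorphism on \emph{edge} stalks, which does not follow from (i) in the corrected (torsion-free) model: one needs $(e_x+e_y)\Gamma\LL(M)+\alpha_E\,e_x\Gamma\LL(M)=(e_x+e_y)M+\alpha_E\,e_xM$, i.e.\ that every global section $(m_z)$ of $\LL(M)$ satisfies $m_x+m_y\in M(E)$ for each edge $E=\{x,y\}$, which is where the push-out description of $\LL(M)_E$ is really used. (If instead you prove the two canonical maps directly, you additionally need the surjectivity of $\Gamma(\M)\to\Gamma\LL\Gamma(\M)$, i.e.\ that an edge-compatible tuple $(m_x)$ with $m_x\in e_x\Gamma(\M)$ assembles to a genuine global section of $\M$, using $\alpha_E\M_E=0$.) None of these verifications appear in the proposal, so the proof of the isomorphism statements is missing, not merely compressed.
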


\begin{remark}
Lemma \ref{adj} implies that we get a pair of mutually inverse equivalences between the images of both functors

\[
\begin{xy}
\xymatrix{
   \LL(\ZZ_A-\mathrm{mod}^f )  \ar@<2pt>[r]^{\Gamma}  &  \Gamma(\mathcal{SH}_A(\G)^f) \ar@<2pt>[l]^{\LL}  \\
} 
\end{xy}
\]
\end{remark}

We now follow \cite{10} to give a concrete description of $\LL$. Let $M \in \ZZ_A-\mathrm{mod}^f$ and denote by $Q$ the quotient field of $A$. Since $M$ is torsion free over $A$ we get an inclusion $M \hookrightarrow M \otimes_A Q$.\\
Let $\sum_{x \in \V} {e_x} = 1 \otimes 1 \in \ZZ \otimes_A Q \cong \prod_{x \in \V} Q$ be a decomposition of $1 \in \ZZ \otimes_A Q$ into idempotents. For $x \in \V$ set 
$$\LL(M)_x = e_xM \subset M \otimes_A Q$$
For an edge $E=\{x,y\}$ with $\alpha:= \alpha(E)$ we set
$$M(E) = (e_x + e_y) M + \alpha e_x M \subset e_x(M\otimes_A Q) \oplus e_y (M \otimes_A Q)$$
and form the push-out diagram

\begin{eqnarray*}
	\begin{CD}
   M(E)  @>\pi_x >> \LL(M)_x\\
   @VV\pi_y V @VV\rho_{x,E} V\\
   \LL(M)_y @>\rho_{y,E} >> \LL(M)_E
	\end{CD}
\end{eqnarray*}

where $\pi_x$, $\pi_y$ are defined by $\pi_x(z)=e_x z$ and $\pi_y(z) = e_y z$. 

This gives the sought after stalk $\LL(M)_E$ with restriction maps $\rho_{x,E}$, $\rho_{y,E}$ coming from the push-out diagram.

\subsection{Sheaves on ordered moment graphs}

\begin{definition}
An ordered moment graph $\G=(\V,\E,\alpha, \leq)$ is a moment graph $(\V,\E,\alpha)$ with a partial order $\leq$ on the set $\V$ of vertices, such that for any $E = \{x,y\}$ the vertices $x, y \in \V$ are comparable.
\end{definition}

\begin{definition}
An F-open subgraph $\mathcal{H} = (\V',\E', \alpha',\leq')$ of $\G$ is a subgraph with $\alpha'$ and $\leq'$ the restrictions of $\alpha$ and $\leq$, respectively, such that
\begin{itemize}
\item If $E= \{x,y\}$ and $x,y \in \V'$, then $E \in \E'$
\item If $x \in \V'$ and $y \in \V$ with $y \leq x$, then $y \in \V'$
\end{itemize}
\end{definition}

\begin{definition}
An $A$-sheaf $M$ on $\G$ is called F-flabby if for any F-open subgraph $\mathcal{H}$ of $\G$ the restriction map $\Gamma(\M) \rightarrow \Gamma(\mathcal{H}, \M)$ is surjective, where $\Gamma(\mathcal{H}, \M):= \{(m_x) \in \prod_{x \in \V'} \M_x \, | \, \rho_{x,E} (m_x) = \rho_{y,E}(m_y) \,\, \mathrm{for}\,\, E=\{x,y\}\}$
\end{definition}

\begin{definition}
A moment graph $(\G, \alpha, \leq)$ has the GKM-property if for every pair $E, E' \in \E$ with $E \neq E'$ and $E \cap E' \neq \emptyset$ we have $\alpha(E) \neq \alpha(E')$.
\end{definition}

Recall that we decomposed $1 \in \ZZ \otimes_A Q \cong \prod_{x \in \V} Q$ into idempotents $1= \sum_{x \in \V}e_x$. For an F-open subset $\mathcal{H}$ of $\G$ with vertices $\V'$, we set $e_{\mathcal{H}} := \sum_{x \in \mathcal{V}'} e_x$.

\begin{prop}[\cite{10}, Proposition 3.14] \label{B}
Suppose that $\G$ is a GKM-graph. Let $M$ be a finitely generated $\ZZ$-module that is torsion free over A. Suppose in addition, that $e_{\mathcal{H}}M$ is a reflexive $A$-module for any F-open subgraph $\mathcal{H}$ of $\G$. Then $\LL(M)$ is F-flabby on $\G$ and we have an isomorphism
$$e_{\mathcal{H}}M \stackrel{\sim}{\longrightarrow} \Gamma({\mathcal{H}}, \LL(M))$$
\end{prop}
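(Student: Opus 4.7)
The plan is to prove the isomorphism by induction on $|\V'|$ and to deduce F-flabbiness as a corollary. Applying the isomorphism with $\mathcal{H} = \G$ gives $\Gamma(\LL(M)) = e_\V M = M$ (since $\sum_{x \in \V} e_x = 1$), so the restriction $\Gamma(\LL(M)) \to \Gamma(\mathcal{H}, \LL(M))$ is identified with $m \mapsto e_\mathcal{H} m$, which maps onto $e_\mathcal{H} M$ by construction; this is the F-flabbiness statement.

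First I would set up the natural comparison map $\varphi_\mathcal{H} \colon e_\mathcal{H} M \to \Gamma(\mathcal{H}, \LL(M))$ via the decomposition $e_\mathcal{H} m = \sum_{y \in \V'} e_y m$ inside $M \otimes_A Q$, with each $e_y m \in e_y M = \LL(M)_y$. For every edge $E = \{x,y\}$ of $\mathcal{H}$, the element $(e_x + e_y) m \in M(E)$ witnesses $\rho_{x,E}(e_x m) = \rho_{y,E}(e_y m)$ in the pushout $\LL(M)_E$, so $\varphi_\mathcal{H}(e_\mathcal{H} m)$ really satisfies the compatibility conditions. Injectivity of $\varphi_\mathcal{H}$ is immediate from the orthogonality of the idempotents $e_y$ in $\ZZ \otimes_A Q$. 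After tensoring with $Q$ every $\alpha_E$ becomes invertible, so $\LL(M)_E \otimes_A Q = 0$ and the compatibility conditions become vacuous; hence $\varphi_\mathcal{H} \otimes Q$ is an isomorphism and $\mathrm{coker}\,\varphi_\mathcal{H}$ is $A$-torsion. The entire argument therefore reduces to surjectivity of $\varphi_\mathcal{H}$.

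The base cases $|\V'| \leq 1$ are trivial. For the inductive step I would choose $x$ maximal in $\mathcal{H}$ and let $\mathcal{H}' \subset \mathcal{H}$ be the F-open subgraph on $\V'' := \V' \setminus \{x\}$ (F-open because removing a maximal element preserves down-closure); write $\delta x := \{E \in \E(\mathcal{H}) : x \in E\}$. Given a compatible section $(s_y)_{y \in \V'}$, the inductive hypothesis applied to $\mathcal{H}'$ produces $m' \in M$ with $e_y m' = s_y$ for every $y \in \V''$. Setting $\epsilon := s_x - e_x m' \in e_x M$, the compatibility of $(s_y)$ combined with $\rho_{x,E}(e_x m') = \rho_{y,E}(e_y m')$ forces $\rho_{x,E}(\epsilon) = 0$ in $\LL(M)_E$ for every $E \in \delta x$. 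It then suffices to produce $\delta \in M$ with $e_y \delta = 0$ for all $y \in \V''$ and $e_x \delta = \epsilon$; then $m' + \delta$ lifts the given section.

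The main obstacle is the construction of $\delta$, and this is the only place both the GKM assumption and the reflexivity hypothesis enter. Unpacking the pushout description of $\LL(M)_E$ translates $\rho_{x,E}(\epsilon) = 0$ into a divisibility-by-$\alpha_E$ condition on $\epsilon$ modulo contributions from $e_y M$. The GKM property guarantees that $\{\alpha_E : E \in \delta x\}$ are pairwise non-proportional linear forms in $S$, hence pairwise coprime in the UFD $A$, so a Chinese Remainder Theorem assembles the edge-wise divisibilities into a single element. Since $\mathrm{coker}\,\varphi_\mathcal{H}$ is already known to be $A$-torsion, it suffices to verify surjectivity at each height-one prime $\p \subset A$, and here I would use the reflexivity of $e_\mathcal{H} M$ in the form $e_\mathcal{H} M = \bigcap_{\mathrm{ht}\,\p = 1} (e_\mathcal{H} M)_\p$, valid because $A$ is a regular local ring. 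At each $\p$ the CRT argument succeeds in the DVR $A_\p$: either all $\alpha_E$ are units in $A_\p$ and the compatibility conditions become vacuous, or $\p = (\alpha_{E_0})$ for the unique $E_0 \in \delta x$ with $\alpha_{E_0} \in \p$ (uniqueness by GKM, since the edges of $\delta x$ all meet at $x$), reducing to a one-variable DVR computation. This prime-local verification completes the surjectivity of $\varphi_\mathcal{H}$ and therefore both claims.
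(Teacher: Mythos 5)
Your overall route is the standard one for this proposition (the paper itself only cites \cite{10} for it, and the proof there runs along the same lines): set up the comparison map, observe its cokernel is torsion, reduce to height-one primes via reflexivity in the form $e_{\mathcal{H}}M=\bigcap_{\mathrm{ht}\,\p=1}(e_{\mathcal{H}}M)_\p$, and use the GKM property so that at each such $\p$ at most one edge at a given vertex has label in $\p$. The induction on $|\V'|$ is a harmless extra layer, and the pushout computation you allude to at a singular edge $E_0=\{x,z\}\in\delta x$ is correct: $\rho_{x,E_0}(\epsilon)=0$ unwinds to $\epsilon=e_xv+\alpha_{E_0}\,e_xw$ with $e_zv=0$, and then $\delta=(e_x+e_z)v+\alpha_{E_0}\,e_xw$ lies in $M\otimes_AA_\p$ because both $e_x+e_z$ and $\alpha_{E_0}e_x$ belong to $\ZZ\otimes_AA_\p$.

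There is, however, a hole in your case analysis at a height-one prime, located exactly at the boundary of $\mathcal{H}$. Since $x$ is maximal in $\mathcal{H}$, every edge of $\G$ at $x$ not lying in $\delta x$ joins $x$ to some $z>x$ with $z\notin\V'$. If $\p=\alpha_{E_0}A$ for such an edge $E_0=\{x,z\}$, neither branch of your dichotomy applies as written: $E_0\notin\delta x$, and although the compatibility conditions over $\delta x$ are indeed vacuous in $A_\p$, vacuousness alone does not produce $\delta$, because $e_x\notin\ZZ\otimes_AA_\p$ (the congruence along $E_0$ fails), so you cannot simply cut out the $x$-component of an element of $M\otimes_AA_\p$. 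The repair stays inside your framework: $e_x+e_z\in\ZZ\otimes_AA_\p$, so writing $\epsilon=e_xm_0$ you may take $\delta=(e_x+e_z)m_0$; its $z$-component is unconstrained precisely because $z\notin\V'$, so the requirement $e_y\delta=0$ for $y\in\V''$ is still met. Relatedly, in your first branch the hypothesis you actually need is that no edge of $\G$ at $x$ (not merely no edge of $\mathcal{H}$ at $x$) has label in $\p$, since that is what puts $e_x$ itself into $\ZZ\otimes_AA_\p$; and once you are over the DVR $A_\p$, where at most one label is a non-unit, the Chinese Remainder language is superfluous. With these points added the argument is complete.
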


\subsection{Braden-MacPherson sheaves}

In this section we will repeat the notion of F-projective sheaves on a moment graph and introduce Braden-MacPherson sheaves which form the indecomposable F-projective sheaves.
\begin{definition}
A sheaf $\M$ on $\G$ is \textit{generated by global sections} if the map $\Gamma(\M) \rightarrow \M_x$ is surjective for every $x \in \V$.
\end{definition}

\begin{notation} \label{not1}
For any $x \in \V$ set $D_x := \{E \in \E \,| \, E=\{x,y\}$, $y \in \V$, $y\leq x\}$ and $U_x := \{E \in \E \,|\, E=\{x,y\}$, $y\in \V$, $x \leq y\}$.
\end{notation}

\begin{definition}
An $A$-sheaf $\mathscr{P}$ on $\G$ is called F-projective if
\begin{itemize}
\item $\mathscr{P}$ is F-flabby and generated by global sections 
\item Each $\mathscr{P}_x$ with $x \in \V$ is a free (graded free for $A=S$) $A$-module 
\item Any $\rho_{x,E}$ with $x \in \V$, $E \in U_x$ induces an isomorphism $\mathscr{P}_x / \alpha_E\mathscr{P}_x \rightarrow \mathscr{P}_E$ of (graded) $A$-modules.
\end{itemize}
\end{definition}

Next, we cite some results about Braden-MacPherson (BMP) sheaves from \cite{B} and \cite{10}. For this we take $A=S_{(0)}$ to be the localization of $S$ at the maximal ideal generated by $V$. 

\begin{thm}[\cite{10}, section 3.5 and \cite{B}, Theorem 6.3]
\begin{enumerate}
\item For any $x \in \V$ there is an up to isomorphism unique graded $S$-sheaf $\mathscr{B}(x)$ on $\G$ with the following
properties:
	\begin{itemize}
	\item $\mathscr{B}(x)$ is F-projective
	\item $\mathscr{B}(x)$ is indecomposable (even as a non-graded sheaf)
	\item $\mathscr{B}(x)_x \cong S$ and $\mathscr{B}(x)_y = 0$ unless $ x \leq y$
	\end{itemize}
	\item Let $\mathscr{P}$ be an F-projective $A$-sheaf of finite type on $\G$. Then there exists an isomorphism of $A$-sheaves
	$$\mathscr{P} \cong \mathscr{B}(z_1) \otimes_S A \oplus ... \oplus \mathscr{B}(z_n) \otimes_S A$$
	with suitable vertices $z_1,..., z_n$.
	\item Let $\mathscr{P}$ be a graded F-projective $S$-sheaf of finite type on $\G$. Then there exists an isomorphism of graded $S$-sheaves
	$$\mathscr{P} \cong \mathscr{B}(z_1)[l_1] \oplus ... \oplus \mathscr{B}(z_n)[l_n]$$
	with suitable vertices $z_1,..., z_n$ and suitable shifts $l_1,...,l_n$.
\end{enumerate}	
\end{thm}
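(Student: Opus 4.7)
The plan is to establish part (1) by an inductive construction of $\mathscr{B}(x)$ along the partial order, then deduce parts (2) and (3) from the universal lifting property that falls out of this construction.

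For part (1), I would build $\mathscr{B}(x)$ stalk by stalk, sweeping through the vertices $y \geq x$ in an order refining $\leq$. The base step sets $\mathscr{B}(x)_x := S$ and $\mathscr{B}(x)_y := 0$ for $y \not\geq x$. At the inductive step, having defined $\mathscr{B}(x)_z$ for all $z < y$ in the support together with the restrictions, the F-projective requirement at the lower vertex $z$ with $\{y,z\} \in U_z$ forces
$$\mathscr{B}(x)_E := \mathscr{B}(x)_z / \alpha_E \mathscr{B}(x)_z \quad \text{for every } E \in D_y.$$
Writing $\mathcal{H}_{<y}$ for the F-open subgraph on vertices strictly less than $y$, I form the image
$$\mathscr{B}(x)^{<y} := \mathrm{Im}\!\Bigl(\Gamma(\mathcal{H}_{<y}, \mathscr{B}(x)) \longrightarrow \bigoplus_{E \in D_y} \mathscr{B}(x)_E\Bigr)$$
under restriction, and define $\mathscr{B}(x)_y$ to be a graded projective cover of this submodule, i.e., a minimal graded free $S$-module surjecting onto it; such a cover exists because $S$ is graded local. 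The restrictions $\rho_{y,E}$ are the components of the cover. The resulting sheaf has free stalks, is generated by global sections, and satisfies the F-projective identity on every $U_z$ by construction.

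The point that has to be checked carefully is F-flabbiness, which inductively reduces to lifting a compatible section on $\mathcal{H}_{<y}$ whose boundary image at $y$ lies in $\mathscr{B}(x)^{<y}$ back through the projective cover, which is automatic from the definition. Uniqueness follows because the only choice at each stage is a projective cover, unique up to non-canonical isomorphism. Indecomposability uses $\mathscr{B}(x)_x = S$: evaluation at $x$ yields $\End(\mathscr{B}(x)) \to S$, and any endomorphism that is not a unit at $x$ must, by the minimality built into each projective cover and a Nakayama argument at successively higher vertices, be locally nilpotent, so $\End(\mathscr{B}(x))$ is local.

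For parts (2) and (3), I argue by induction on the size of $\mathrm{supp}(\mathscr{P})$. Choose $z$ minimal in the support and a homogeneous basis $p_1,\dots,p_n$ of the free module $\mathscr{P}_z$ of degrees $l_1,\dots,l_n$, and lift each $p_i$ to a global section $\tilde p_i$ (using that $\mathscr{P}$ is generated by global sections). The universal lifting property built into the construction of $\mathscr{B}(z)$ produces a morphism $\mathscr{B}(z)[l_i] \to \mathscr{P}$ sending the generator at $z$ to $\tilde p_i$: one extends vertex by vertex, using F-flabbiness of $\mathscr{P}$ and freeness of each stalk of $\mathscr{B}(z)$ to lift the assignment on $D_y$ through the projective cover defining $\mathscr{B}(z)_y$. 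The main obstacle throughout is exactly this lifting: at each vertex one has to check that the image of the already constructed partial map lands in $\mathscr{B}(z)^{<y}$, which holds because the obstruction is itself a boundary value of a global section of $\mathscr{P}|_{\mathcal{H}_{<y}}$. The direct sum $\bigoplus_i \mathscr{B}(z)[l_i] \to \mathscr{P}$ is an isomorphism on the stalk at $z$; minimality makes it a split embedding of F-projective sheaves, the quotient has strictly smaller support and remains F-projective, and the induction closes. For part (2) the same argument runs without the degree shifts after base-changing to $A$.
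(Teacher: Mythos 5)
The paper itself offers no proof of this theorem (it is quoted from Jantzen's lecture notes and from Fiebig--Williamson), and your construction is exactly the Braden--MacPherson algorithm used in those sources: stalk $S$ at $x$, edge stalks along $D_y$ forced by the condition at the lower vertex, $\mathscr{B}(x)_y$ a graded projective cover of the boundary image $\mathscr{B}(x)^{<y}$, F-flabbiness by lifting boundary values through the cover, and the decomposition by induction on the support. The skeleton is therefore the right one, but the decisive step in (2)/(3) is missing: asserting that the map $u:\bigoplus_i\mathscr{B}(z)[l_i]\to\mathscr{P}$, an isomorphism on the stalk at the minimal vertex $z$, is a split embedding ``by minimality'' is not an argument --- minimality of the covers inside $\mathscr{B}(z)$ does not produce a retraction. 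What is needed is the dual lifting lemma: since $\mathscr{P}$ is generated by global sections and has free stalks, one builds, again vertex by vertex, a morphism $r:\mathscr{P}\to\bigoplus_i\mathscr{B}(z)[l_i]$ extending $(u_z)^{-1}$; at each vertex $y$ the boundary image of $\mathscr{P}_y$ along $D_y$ lands in $\mathscr{B}^{<y}$ precisely because every element of $\mathscr{P}_y$ is the value of a global section, and then freeness of $\mathscr{P}_y$ lets one lift through the cover $\mathscr{B}_y\twoheadrightarrow\mathscr{B}^{<y}$. Only then is $r\circ u$ an isomorphism at $z$, hence an automorphism by graded Nakayama and locality of the endomorphism rings, and the summand splits off. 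Note the asymmetry your text misses: morphisms \emph{into} $\mathscr{P}$ use its flabbiness, morphisms \emph{out of} $\mathscr{P}$ use generation by global sections; you only record the first.

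Relatedly, your uniqueness claim in (1) (``the only choice at each stage is a projective cover'') presupposes that an arbitrary sheaf with the three listed properties has, at every vertex, a \emph{minimal} cover of its boundary image. A priori the surjection $\mathscr{P}_y\twoheadrightarrow\mathscr{P}^{<y}$ could fail to be minimal, and one cannot simply split off the superfluous free part at $y$ as a subsheaf, because it propagates upward through the edges in $U_y$; ruling this out for an indecomposable sheaf is exactly the content of the decomposition theorem. So uniqueness must be deduced from (2)/(3) (via the two-sided lifting argument above), not read off from the construction. Two smaller omissions: the theorem asserts indecomposability even as a non-graded sheaf, and part (2) takes place over the local ring $A=S_{(0)}$, where the Krull--Schmidt-type conclusion again rests on locality of the ungraded endomorphism ring; both follow from the same Nakayama argument at the minimal vertex of the support, but should be said.
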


\section{Deformed category $\OC$}

In this section we recall results about the deformed category $\OC$ of a semisimple complex Lie algebra $\g$ with Borel $\lb$ and Cartan $\h$, which one can find in \cite{5}, \cite{14} and \cite{11}. Let $S$ denote the universal enveloping algebra of the Cartan $\h$ which is equal to the ring of polynomial functions $\C[\h^{*}]$.
We call a commutative, associative, noetherian, unital, local $S$-algebra $A$ with structure morphism $\tau: S \rightarrow A$ a \textit{local deformation algebra}.\\

Let $A$ be a local deformation algebra with structure morphism $\tau: S\rightarrow A$ and let $M \in \g$-mod-$A$. For $\lam \in \h^{*}$ we set
$$M_\lam = \{m \in M| hm=(\lam + \tau)(h)m \, \, \forall h \in \h\}$$ 
where $(\lam+\tau)(h)$ is meant to be an element of $A$. We call the $A$-submodule $M_\lam$ the \textit{deformed $\lam$-weight space} of $M$.\\

The \textit{deformed category} $\OA$ is the full subcategory of all bimodules $M \in \g$-mod-$A$ with the properties
\begin{itemize}
\item $M= \bigoplus\limits_{\lam \in \h^{*}} M_\lam$,
\item for every $m \in M$ the $\lb$-$A$-sub-bimodule generated by $m$ is finitely generated as an $A$-module,
\item $M$ is finitely generated as a $\g$-$A$-bimodule.
\end{itemize}

Taking $A=S/S\h \cong \C$, $\OA$ is just the usual BGG-category $\OC$.\\
For $\lam \in \h^{*}$ the \textit{deformed Verma module} is defined by
$$\Delta_A(\lam) = U(\g) \otimes_{U(\lb)} A_\lam$$
where $A_\lam$ denotes the $U(\lb)$-$A$-bimodule $A$ with $\lb$-structure given by the composition $U(\lb) \rightarrow S \stackrel{\lam + \tau}{\longrightarrow} A$.\\

The Lie algebra $\g$ possesses an involutive anti-automorphism $\sigma:\g \rightarrow \g$ with $\sigma|_{\h} = -\mathrm{id}$. This gives the $A$-module $\Hom_A(M,A)$ a $\sigma$-twisted $\g$-module structure. Denoting by $dM$ the sum of all deformed weight spaces in $\Hom_A(M,A)$, we get a functor

$$d=d_\sigma : \OA \longrightarrow \OA$$

which is a duality on $\g$-$A$-bimodules which are free over $A$. We now set $\nabla_A(\lam)=d\Delta_A(\lam)$ for $\lam \in \h^{*}$ and call this the \textit{deformed nabla module}.

\begin{prop}[\cite{14}, Proposition 2.12.]\label{Soe1}
\begin{enumerate}
\item For all $\lam$ the restriction to the deformed weight space of $\lam$ together with the two canonical identifications $\Delta_A(\lam)_\lam \stackrel{\sim}{\rightarrow} A$ and $\nabla_A(\lam)_\lam \stackrel{\sim}{\rightarrow} A$ induces an  isomorphism
$$\Hom_{\OA}(\Delta_A(\lam),\nabla_A(\lam)) \stackrel{\sim}{\longrightarrow}A$$

\item For $\lam \neq \mu$ in $\h^{*}$ we have $\Hom_{\OA}(\Delta_A(\lam),\nabla_A(\mu))=0$.

\item For all $\lam,\mu \in \h^{*}$ we have $\Ext^{1}_{\OA}(\Delta_A(\lam),\nabla_A(\mu))=0$.
\end{enumerate}
\end{prop}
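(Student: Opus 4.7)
The plan is to funnel all three statements through Frobenius reciprocity and then analyze the $\mathfrak{n}^+$-cohomology of $\nabla_A(\mu)$, where $\mathfrak{n}^+\subset\lb$ denotes the nilpotent radical. Since $U(\g)$ is free over $U(\lb)$ by PBW, the induction $U(\g)\otimes_{U(\lb)}-$ is exact, and for every $M\in\OA$ one obtains the functorial identification
\[
\Hom_{\OA}(\Delta_A(\lam),M)\;\cong\;\Hom_{U(\lb)\otimes A}(A_\lam,M)\;=\;M_\lam^{\mathfrak{n}^+},
\]
and correspondingly $\Ext^i_{\OA}(\Delta_A(\lam),-)\cong\Ext^i_{U(\lb)\otimes A}(A_\lam,-)$.

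Parts (1) and (2) are then a computation of $\nabla_A(\mu)_\lam^{\mathfrak{n}^+}$. Under the $\sigma$-twisted duality, an element here is an $A$-linear functional $\phi:\Delta_A(\mu)\to A$ supported on the $\lam$-weight space, and $\mathfrak{n}^+$-invariance (using that $\sigma$ swaps $\mathfrak{n}^+$ with $\mathfrak{n}^-$) translates to the vanishing of $\phi$ on $\mathfrak{n}^-_{-\al}\cdot\Delta_A(\mu)_{\lam+\al}$ for every positive root $\al$. If $\lam=\mu$ then no constraint is placed on $\phi|_{Av_\mu}$, giving the $A$ of part (1) via evaluation at $v_\lam$. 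If $\lam\neq\mu$, then either $\Delta_A(\mu)_\lam=0$ (when $\lam\not\leq\mu$) or $\lam<\mu$; in the latter case every element of $\Delta_A(\mu)_\lam=U(\mathfrak{n}^-)_{\lam-\mu}v_\mu$ factors as $y\cdot m$ with $y\in\mathfrak{n}^-$ and $m$ of weight strictly above $\lam$, because $\lam-\mu$ is a nonzero sum of negative roots so every monomial in $U(\mathfrak{n}^-)_{\lam-\mu}$ has positive length. Hence $\phi$ is forced to vanish identically, proving (2).

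For part (3), any extension $0\to\nabla_A(\mu)\to E\to\Delta_A(\lam)\to 0$ in $\OA$ splits iff $v_\lam$ admits a lift $v\in E_\lam$ with $\mathfrak{n}^+v=0$. An $A$-linear lift exists because taking the $\lam$-weight-space is exact and the right-hand term $\Delta_A(\lam)_\lam=A$ is free, so the induced sequence of $A$-modules splits. The resulting map $x\mapsto xv$ is a Lie $1$-cocycle $\mathfrak{n}^+\to\nabla_A(\mu)$ of weight $\lam$, and modifying $v$ by $w\in\nabla_A(\mu)_\lam$ alters it by the coboundary $x\mapsto xw$, so the obstruction is a class in $H^1(\mathfrak{n}^+,\nabla_A(\mu))_\lam$. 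Showing this class vanishes is the main obstacle. My approach is to exploit the duality $d$, which yields the symmetry $\Ext^1_{\OA}(\Delta_A(\lam),\nabla_A(\mu))\cong\Ext^1_{\OA}(\Delta_A(\mu),\nabla_A(\lam))$, to reduce to the case $\lam\leq\mu$, and then to invoke the relative injectivity of $\nabla_A(\mu)$ in the truncated deformed category $\OA^{\leq\mu}$: since $\Delta_A(\lam)$ also lies in $\OA^{\leq\mu}$, the extension must split there, hence in $\OA$. Setting up this truncated highest-weight structure on $\OA$ (truncations, projective covers of Vermas, injectivity of the costandards) within the deformed BGG framework is the most technical ingredient and the real work of the proof.
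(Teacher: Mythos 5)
The paper gives no proof of this proposition at all---it is quoted from Soergel [14, Proposition~2.12]---so your argument has to stand on its own. Parts (1) and (2) do: Frobenius reciprocity identifies $\Hom_{\OA}(\Delta_A(\lam),M)$ with the $\mathfrak{n}^+$-invariants of $M_\lam$, and for $M=\nabla_A(\mu)=d\Delta_A(\mu)$ the invariance condition forces the functional to vanish on $\sum_{\alpha>0}\mathfrak{n}^-_{-\alpha}\,\Delta_A(\mu)_{\lam+\alpha}$; this imposes no condition when $\lam=\mu$ (since $\Delta_A(\mu)_{\mu+\alpha}=0$), exhausts $\Delta_A(\mu)_\lam$ when $\lam<\mu$ (every PBW monomial of nonzero weight has positive length), and the remaining case $\lam\not\leq\mu$ has $\Delta_A(\mu)_\lam=0$. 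Modulo routine bookkeeping with the $\sigma$-twist, that is a correct proof of (1) and (2).

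Part (3), however, is not yet a proof. The whole content is delegated to the unproved assertion that $\nabla_A(\mu)$ is injective in the truncated deformed category $\OA^{\leq\mu}$, which you yourself label ``the real work''; but that assertion is essentially equivalent to the Ext-vanishing being proved (its standard derivation goes through $\Ext^1(\Delta,\nabla)=0$, or dually through projectivity of $\Delta_A(\mu)$ in the truncation), so as written the argument is incomplete and risks circularity. In addition, the reduction ``by duality to $\lam\leq\mu$'' does not cover incomparable $\lam,\mu$: the functor $d$ only interchanges the roles of $\lam$ and $\mu$, so the incomparable case maps to itself. The repair is short and makes the truncated-category machinery unnecessary: if $\lam\not<\mu$, then for every positive root $\alpha$ the weight $\lam+\alpha$ occurs neither in $\nabla_A(\mu)$ nor in $\Delta_A(\lam)$, so the $A$-linear lift $v\in E_\lam$ you construct automatically satisfies $\mathfrak{n}^+v=0$ and splits the extension; the case $\lam<\mu$ then follows by applying $d$ to the extension, which is legitimate because all three terms have finitely generated free weight spaces over the local ring $A$ (each weight-space sequence splits, so $d$ is exact on it and $dd\cong\mathrm{id}$), producing an extension of $\Delta_A(\mu)$ by $\nabla_A(\lam)$ with $\mu\not<\lam$, already shown to split. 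You should also state this freeness argument explicitly, since in the deformed setting $d$ is a duality only on $A$-free objects.
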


\begin{cor}[\cite{14}, Corollary 2.13.]\label{Soe2}
Let $M,N \in \OA$. If $M$ has a $\Delta_A$-flag and $N$ a $\nabla_A$-flag, then the space of homomorphisms $\Hom_{\OA}(M,N)$ is a finitely generated free $A$-module and for any homomorphism $A\rightarrow A'$ of local deformation algebras the obvious map defines an isomorphism
$$\Hom_{\OA}(M,N)\otimes_{A} A' \stackrel{\sim}{\longrightarrow}   \Hom_{\OC_{A'}}(M\otimes_{A} A',N\otimes_{A} A')$$
\end{cor}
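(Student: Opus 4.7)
The plan is to proceed by a double induction on the lengths of the $\Delta_A$-flag of $M$ and the $\nabla_A$-flag of $N$, using Proposition \ref{Soe1} as the base case. The central tool is the vanishing $\Ext^1_{\OA}(\Delta_A(\mu),\nabla_A(\lam))=0$, which converts relevant long exact sequences into short exact ones.

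First I would fix $N=\nabla_A(\lam)$ and induct on the length of a $\Delta_A$-flag $0=M_0\subset M_1\subset\cdots\subset M_n=M$ with subquotients $\Delta_A(\mu_i)$. Applying $\Hom_{\OA}(-,\nabla_A(\lam))$ to the short exact sequence $0\to M_{n-1}\to M\to \Delta_A(\mu_n)\to 0$ produces a long exact sequence whose $\Ext^1_{\OA}(\Delta_A(\mu_n),\nabla_A(\lam))=0$ term collapses it into a short exact sequence of Hom-groups, and continuing the sequence also yields $\Ext^1_{\OA}(M,\nabla_A(\lam))=0$. By induction both outer Hom-groups are finitely generated free $A$-modules, so the short exact sequence of Hom-groups splits (the quotient is free, hence projective over the local ring $A$), and the middle term is finitely generated free. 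I would then fix such an $M$ and induct on the length of a $\nabla_A$-flag of $N$ via a sub-filtration $0\to \nabla_A(\lam_1)\to N\to N/\nabla_A(\lam_1)\to 0$, invoking the $\Ext^1$-vanishing just established to again collapse the long exact sequence, and conclude by the same splitting argument.

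For the base change assertion, I would first identify $\Delta_A(\mu)\otimes_A A'\cong \Delta_{A'}(\mu)$, which follows immediately from the definition and associativity of tensor products, and $\nabla_A(\lam)\otimes_A A'\cong \nabla_{A'}(\lam)$, which I flag as the main technical obstacle (see below). Granting these, I run the same double induction: the base case reduces to $0$ or to the obvious map $A\otimes_A A'\cong A'$. In the inductive step for $M$, tensoring the short exact sequence of Hom-groups with $A'$ preserves exactness because its rightmost term is free over $A$, so $\text{Tor}_1^A$ vanishes; on the other side, $\Delta_A(\mu_n)$ is $A$-flat so $0\to M_{n-1}\otimes_A A'\to M\otimes_A A'\to \Delta_{A'}(\mu_n)\to 0$ is exact, and the $\Ext^1_{\OC_{A'}}(\Delta_{A'}(\mu_n),N\otimes_A A')=0$ from Proposition \ref{Soe1} applied to $A'$ collapses the associated long exact sequence. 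The two short exact sequences fit into a ladder via the natural comparison map, and the $5$-lemma propagates the isomorphism. The induction on the $\nabla_A$-flag of $N$ is entirely analogous.

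The main obstacle is the compatibility $\nabla_A(\lam)\otimes_A A'\cong \nabla_{A'}(\lam)$: the duality $d$ is built from $\Hom_A(-,A)$, which is contravariant and does not commute with arbitrary base change. The fix is that each deformed weight space $\Delta_A(\lam)_\mu$ is a finitely generated free $A$-module of finite rank, so for each $\mu$ one has $\Hom_A(\Delta_A(\lam)_\mu,A)\otimes_A A'\cong \Hom_{A'}(\Delta_A(\lam)_\mu\otimes_A A',A')$; summing these isomorphisms over $\mu$ yields the required identification, which then makes the $5$-lemma argument above go through cleanly.
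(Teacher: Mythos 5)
Your argument is correct and follows the same route as the paper, whose proof is simply the one-line remark that the statement follows from Proposition \ref{Soe1} by induction on the lengths of the $\Delta_A$- and $\nabla_A$-flags; your double induction, the use of the $\Ext^1$-vanishing to collapse the long exact sequences, and the base-change bookkeeping (flatness of the flag subquotients, compatibility $\nabla_A(\lam)\otimes_A A'\cong\nabla_{A'}(\lam)$ via the finitely generated free weight spaces) are exactly the details that proof leaves implicit.
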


\begin{proof}
This follows from Proposition \ref{Soe1} by induction on the length of the $\Delta_A$- and $\nabla_A$-flag.
\end{proof}

If $\mathfrak{m}\subset A$ is the unique maximal ideal in the local deformation algebra $A$ we set $\K=A/\mathfrak{m}A$ for its residue field.

\begin{thm}[\cite{5}, Propositions 2.1 and 2.6] \label{Fie1}

\begin{enumerate}
\item The base change $\cdot \otimes_A \K$ gives a bijection\\
\begin{displaymath}
		\begin{array}{ccc}
		
			\left\{\begin{array}{c}
        \textrm{simple isomorphism}\\
      	\textrm{classes of $\OA$}
    	\end{array}\right\}
		&
		\longleftrightarrow
		&
			\left\{\begin{array}{c}
        \textrm{simple isomorphism}\\
      	\textrm{classes of $\OC_{\K}$}
    	\end{array}\right\}
		\end{array}
	\end{displaymath}

\item The base change $\cdot \otimes_A \K$ gives a bijection\\	
\begin{displaymath}
		\begin{array}{ccc}
		
			\left\{\begin{array}{c}
        \textrm{projective isomorphism}\\
      	\textrm{classes of $\OA$}
    	\end{array}\right\}
		&
		\longleftrightarrow
		&
			\left\{\begin{array}{c}
        \textrm{projective isomorphism}\\
      	\textrm{classes of $\OC_{\K}$}
    	\end{array}\right\}
		\end{array}
	\end{displaymath}
\end{enumerate}
\end{thm}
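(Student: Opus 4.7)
The plan is to attack the two parts by quite different methods: part (1) on simples will reduce to a Nakayama argument, while part (2) on projectives demands an explicit construction in $\OA$ together with an idempotent-lifting step.

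For part (1), I would begin by observing that the three defining conditions of $\OA$, combined with the PBW decomposition $U(\g) = U(\mathfrak{n}^-)U(\lb)$, force each weight space $M_\mu$ of any $M \in \OA$ to be finitely generated over $A$: the finite generators as a $\g$-$A$-bimodule can be assumed to be weight vectors, their $\lb$-$A$-bimodule orbits have finitely many nonzero weight components (each finitely generated over $A$ by local $\lb$-finiteness), and applying elements of $U(\mathfrak{n}^-)$ of fixed weight-drop yields a finite $A$-generating set. Now let $L \in \OA$ be simple. Since $\mathfrak{m} \subset A$ acts centrally in the bimodule structure, $\mathfrak{m}L$ is a sub-bimodule, so $\mathfrak{m}L = 0$ or $\mathfrak{m}L = L$. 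In the second case $\mathfrak{m}L_\mu = L_\mu$ for every $\mu$, and Nakayama's lemma applied to the finitely generated $A$-module $L_\mu$ gives $L_\mu = 0$ for all $\mu$, contradicting $L \neq 0$. Hence $\mathfrak{m}L = 0$, so $L$ factors through $A \twoheadrightarrow \K$ and is already an object of $\OC_{\K}$. Conversely, any simple in $\OC_{\K}$ is trivially a simple in $\OA$ via the quotient $A \twoheadrightarrow \K$, and the base change $\cdot \otimes_A \K$ acts as the identity on all such objects, giving the desired bijection.

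For part (2), I would proceed in three steps. First, I would verify that base change preserves projectivity among modules with $\Delta_A$-flags: using Corollary \ref{Soe2} one has $\Hom_{\OA}(P_A,N) \otimes_A \K \cong \Hom_{\OC_\K}(P_A \otimes_A \K, N \otimes_A \K)$ for $N$ with a $\nabla_A$-flag, and exactness in $\OA$ transfers to $\OC_\K$ via this identification. Second, I would construct a projective generator of a truncated subcategory $\OA^{\Leq \nu}$ (for $\nu$ sufficiently dominant) as a direct summand of an induced module of the form $U(\g) \otimes_{U(\lb)} E$ for an appropriate finite-rank $\lb$-$A$-bimodule $E$, paralleling the classical BGG construction of projectives in $\OC$. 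Third, I would decompose this generator into indecomposables $P_A(\lam)$ labeled by highest weight, matching the labeling of simples from part (1).

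The key obstacle lies in this last step, namely lifting the primitive orthogonal idempotent decomposition of $\End_{\OC_\K}(P_\K)$ to $\End_{\OA}(P_A)$, where $P_A \otimes_A \K = P_\K$. By Corollary \ref{Soe2}, $\End_{\OA}(P_A)$ is a finitely generated $A$-module whose reduction modulo $\mathfrak{m}$ is $\End_{\OC_\K}(P_\K)$, so a careful idempotent-lifting argument along the $\mathfrak{m}$-adic filtration of this finite $A$-algebra yields indecomposable summands $P_A(\lam)$ with $P_A(\lam) \otimes_A \K \cong P_\K(\lam)$. Bijectivity on iso classes then follows: injectivity from the non-triviality of the $P_\K(\lam)$, surjectivity from the construction, and well-definedness up to isomorphism from Nakayama applied to the $A$-free Hom-spaces in Corollary \ref{Soe2}, which allows any isomorphism between base changes to be lifted to $\OA$.
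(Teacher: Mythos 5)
Your part (1) is essentially correct, and it is in substance the argument of Fiebig's paper \cite{5}, from which the present paper quotes this theorem without proof: the weight spaces of any object of $\OA$ are finitely generated over $A$, so for a simple $L$ the sub-bimodule $\mathfrak{m}L$ must be $0$ or $L$, Nakayama rules out the second case, hence $L$ is killed by $\mathfrak{m}$ and is an object of $\OC_{\K}$; conversely simples of $\OC_{\K}$ are simple in $\OA$ and base change is the identity on them.

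Part (2), however, has a genuine gap at the step you yourself flag as the key one. The local deformation algebras in play (e.g.\ $A=S_{(0)}$ or its further localizations $R_{\p}$) are local and noetherian but neither $\mathfrak{m}$-adically complete nor henselian, and for such $A$ idempotents of $B/\mathfrak{m}B$ need \emph{not} lift to a finite $A$-algebra $B$: for instance $B=\Z_{(5)}[i]$ is a domain, hence has no nontrivial idempotents, while $B/5B\cong\mathbb{F}_5\times\mathbb{F}_5$ does. So "lifting along the $\mathfrak{m}$-adic filtration" is simply not available here, and this is precisely why Fiebig's proof takes a different route: he constructs the projective covers $P_A(\lam)$ explicitly (as summands of modules induced from finite rank $\lb$-$A$-bimodules, a construction compatible with base change, so that $P_A(\lam)\otimes_A\K\cong P_\K(\lam)$), which gives surjectivity of the correspondence; injectivity is obtained by lifting an isomorphism $P\otimes_A\K\cong P'\otimes_A\K$ to a morphism $P\to P'$ (using projectivity of $P$ against the surjection $P'\twoheadrightarrow P'/\mathfrak{m}P'$, or the Hom base-change isomorphism of Theorem \ref{Fie2}(3)) and showing it is an isomorphism by Nakayama together with the $A$-freeness of the weight spaces of a Verma-flag object; and the decomposition of an arbitrary projective into the $P_A(\lam)$ is done by splitting off one $P_A(\lam)$ at a time via a lifted surjection onto a simple, never by lifting idempotents. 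A second, smaller gap is your step (1): Corollary \ref{Soe2} only controls Hom into objects with a $\nabla_A$-flag, and vanishing of $\Ext^1(P\otimes_A\K,\nabla_\K(\mu))$ characterizes objects with a Verma flag, not projective objects; to see that $P\otimes_A\K$ is projective one needs the full base-change statement for projectives (Theorem \ref{Fie2}(3)), whose proof again rests on the explicit construction rather than on \ref{Soe2}.
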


The category $\OC_{\K}$ is a direct summand of the category $\OC$ over the Lie algebra $\g \otimes \K$. It consists of all objects whose weights lie in the complex affine subspace $\tau + \h^{*} = \tau + \Hom_\C(\h,\C) \subset \Hom_\K(\h\otimes \K,\K)$ for $\tau$ the restriction to $\h$ of the map that makes $\K$ to an $S$-algebra. Thus the simple objects of $\OA$ are parameterized by their highest weight in $\h^{*}$. Denote by $L_A(\lam)$ the simple object with highest weight $\lam$. We also use the usual partial order on $\h^{*}$ to partially order $\tau + \h^{*}$.

\begin{thm}[\cite{5}, Propositions 2.4 and Theorem 2.7] \label{Fie2}
Let $A$ be a local deformation algebra and $\K$ its residue field. Let $L_A(\lam)$ be a simple object in $\OA$.

\begin{enumerate}
\item There is a projective cover $P_A(\lam)$ of $L_A(\lam)$ in $\OA$ and every projective object in $\OA$ is isomorphic to a direct sum of projective covers.
\item $P_A(\lam)$ has a Verma flag, i.e., a finite filtration with subquotients isomorphic to Verma modules, and for the multiplicities we have the BGG-reciprocity formula 
$$(P_A(\lam):\Delta_A(\mu)) = [\Delta_{\K}(\mu):L_{\K}(\lam)]$$
for all Verma modules $\Delta_A(\mu)$ in $\OA$.
\item Let $A \rightarrow A'$ be a homomorphism of local deformation algebras and $P$ projective in $\OA$. Then $P\otimes_A A'$ is projective in $\OC_{A'}$ and the natural transformation
$$\Hom_{\OA}(P,\cdot) \otimes_A A' \longrightarrow \Hom_{\OC_{A'}}(P \otimes_A A', \cdot \otimes_A A')$$
is an isomorphism of functors from $\OA$ to $A'$-mod.
\end{enumerate}
\end{thm}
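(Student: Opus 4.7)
My approach is to build on the two tools supplied above: the bijection on simple and projective isomorphism classes (Theorem \ref{Fie1}) and the Hom-compatibility for modules carrying Verma or nabla flags (Corollary \ref{Soe2}). The Verma modules $\Delta_A(\lam)$ will play the role of ``relative projectives'' in truncated subcategories, and the strategy is to glue them together by Verma extensions to build genuine projective covers, then transport properties between $\OA$ and $\OC_\K$ via base change.

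For part (1), fix $\lam \in \h^*$, choose a finite order ideal $\Omega \subset \h^*$ containing $\lam$, and work in the truncation $\OA^\Omega$ of modules with weights in $\Omega$. If $\mu \in \Omega$ is maximal then $\Delta_A(\mu)$ is projective in $\OA^\Omega$; by iteratively extending by Vermas of smaller weights so as to kill the remaining $\Ext^1$'s with the simples, one constructs a module $P_A^\Omega(\lam)$ which is projective in $\OA^\Omega$ and has top $L_A(\lam)$. Indecomposability is verified after base change to $\K$: by Corollary \ref{Soe2} one has $\End_{\OA}(P_A^\Omega(\lam)) \otimes_A \K \cong \End_{\OC_\K}(P_\K^\Omega(\lam))$, and the right-hand side is local by the classical theory, hence so is the left-hand side by Nakayama. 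The multiplicity formula proved in the next paragraph shows that the Verma composition factors of $P_A^\Omega(\lam)$ are independent of $\Omega$ once $\Omega$ is large enough in the block of $\lam$, so $P_A^\Omega(\lam)$ stabilizes to a projective $P_A(\lam) \in \OA$; that every projective is a sum of such covers then follows formally, since any projective decomposes via its local endomorphism rings after base change.

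For part (2), the Verma flag is built into the construction. BGG reciprocity follows from the chain
\begin{align*}
(P_A(\lam):\Delta_A(\mu)) &= \mathrm{rk}_A \Hom_{\OA}(P_A(\lam),\nabla_A(\mu)) \\
&= \dim_\K \Hom_{\OC_\K}(P_\K(\lam),\nabla_\K(\mu)) \\
&= [\nabla_\K(\mu):L_\K(\lam)] = [\Delta_\K(\mu):L_\K(\lam)],
\end{align*}
where the first equality is Proposition \ref{Soe1} applied inductively along the Verma flag, the second is Corollary \ref{Soe2} with $A'=\K$, the third uses projectivity of $P_\K(\lam)$ against the Jordan-H\"older filtration of $\nabla_\K(\mu)$, and the last uses the duality $d$.

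For part (3), I would first show by induction along the Verma flag that $P \otimes_A A'$ has a Verma flag in $\OC_{A'}$ with $\Delta_A(\mu) \otimes_A A' \cong \Delta_{A'}(\mu)$. Computing multiplicities via the reciprocity formula of part (2) over $A'$ then identifies $P \otimes_A A'$ as a direct sum of the projective covers constructed for $A'$, so in particular it is projective. The natural transformation in (3) is already known to be an isomorphism on arguments of the form $\nabla_{A'}(\mu)$ by Corollary \ref{Soe2}, and both sides are exact functors of the second argument (by projectivity of $P$ and of $P \otimes_A A'$), so it extends to an isomorphism on all of $\OA$ via a short nabla-resolution. The main obstacle is controlling the iterative extension in part (1): in the deformed setting one cannot check the ``projective cover'' property directly by a radical-type argument, and the detour through $\OC_\K$ via Theorem \ref{Fie1} and Corollary \ref{Soe2} is the essential input that makes both indecomposability and stabilization of $P_A^\Omega(\lam)$ accessible.
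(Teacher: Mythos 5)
The paper itself offers no proof of this theorem: it is quoted from Fiebig \cite{5} (Propositions 2.4, 2.6 and Theorem 2.7), so your proposal can only be measured against Fiebig's argument. Your skeleton is the right one, and your part (2) is essentially the standard proof of BGG reciprocity: compute $\mathrm{rk}_A\Hom_{\OA}(P_A(\lam),\nabla_A(\mu))$ along the Verma flag using Proposition \ref{Soe1}, specialize via Corollary \ref{Soe2}, and count multiplicities over the residue field. Note, though, that this chain needs $P_A(\lam)\otimes_A\K\cong P_\K(\lam)$ as the projective cover in $\OC_\K$, i.e.\ it needs the base-change statement first; the logical order in \cite{5} is (1), (3), (2), and your appeal to Corollary \ref{Soe2} for $\End_{\OA}(P^\Omega_A(\lam))\otimes_A\K$ in part (1) is not licensed either, since the second argument $P^\Omega_A(\lam)$ has a $\Delta_A$-flag but no $\nabla_A$-flag -- that identity is exactly an instance of part (3).

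The genuine gaps are in (1) and above all in (3). In (1), ``extending by Vermas so as to kill the remaining $\Ext^1$'s with the simples'' is not an argument in the deformed setting: $\OA$ is not a finite-length category and projectivity cannot be tested against the $L_A(\mu)$; the actual construction (Rocha-Caridi--Wallach, adapted by Fiebig) produces the truncated projectives as truncations of induced modules $U(\g)\otimes_{U(\lb)}E$ for suitable $\lb$-$A$-bimodules $E$ (equivalently, by universal extensions by Verma modules), for which $\Hom_{\OA}(U(\g)\otimes_{U(\lb)}E,\,\cdot\,)$ is identified with a finite direct sum of deformed weight spaces, whence exactness; your sketch never verifies projectivity in $\OC_A^{\Omega}$, nor does stabilization follow from equality of multiplicities alone (for semisimple $\g$ one can instead take $\Omega$ to contain the whole finite block). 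In (3), equality of Verma multiplicities does not identify $P\otimes_AA'$ with a direct sum of projective covers -- two modules with $\Delta_{A'}$-flags and the same multiplicities need not be isomorphic -- so projectivity of $P\otimes_AA'$ is not established; and the extension of the Hom base-change isomorphism from the $\nabla_A(\mu)$ to all of $\OA$ ``via a short nabla-resolution'' fails: both composite functors are only right exact (since $\otimes_AA'$ is not exact), so one would need every object of $\OA$ to be a quotient of a module with a $\nabla_A$-flag, which is false already for $\mathfrak{sl}_2$ (no $\nabla$-flagged module surjects onto the simple of dominant highest weight). The standard repair, and Fiebig's route, is to prove (3) first for the explicitly constructed induced projectives, where the weight-space description of the Hom functor visibly commutes with $\otimes_AA'$ and makes projectivity of the base change evident, and then pass to direct summands; your part (2) computation then goes through verbatim.
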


\subsection{Block decomposition}

Let $A$ again denote a local deformation algebra and $\K$ its residue field.

\begin{definition}
Let $\sim_A$ be the equivalence relation on $\h^{*}$ generated by $\lam \sim_A \mu$ if $[\Delta_{\K}(\lam):L_{\K}(\mu)] \neq 0$.
\end{definition}

\begin{definition}
Let $\Lambda \in \h^{*}/\sim_{A}$ be an equivalence class. Let $\OC_{A,\Lambda}$ be the full subcategory of $\OA$ consisting of all modules $M$ such that every highest weight of a subquotient of $M$ lies in $\Lambda$.
\end{definition}

\begin{prop}[\cite{5}, Proposition 2.8] (Block decomposition)\label{Fie3}
The functor
\begin{eqnarray*}
		\begin{array}{ccc}
		\bigoplus\limits_{\Lambda \in \h^{*}/\sim_A} \OC_{A,\Lambda}  &\longrightarrow&  \OA\\
		(M_\Lambda)_{\Lambda \in \h^{*}/\sim_A}& \longmapsto & \bigoplus\limits_{\Lambda \in \h^{*}/\sim_A} M_\Lambda
		
		\end{array}
	\end{eqnarray*}
is an equivalence of categories.
\end{prop}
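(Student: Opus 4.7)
The plan is to combine Theorems \ref{Fie1} and \ref{Fie2} with a standard projective-presentation argument. First I would establish that each indecomposable projective $P_A(\lambda)$ lies in $\OC_{A,[\lambda]}$, where $[\lambda]$ denotes the $\sim_A$-class. By Theorem \ref{Fie2}(2), $P_A(\lambda)$ has a Verma flag with subquotients $\Delta_A(\mu)$ satisfying $[\Delta_\K(\mu):L_\K(\lambda)] \neq 0$, and by definition of $\sim_A$ this forces $\mu \sim_A \lambda$. If $L_A(\nu)$ is a simple subquotient of some $\Delta_A(\mu)$, then the projective cover $P_A(\nu)\to L_A(\nu)$ lifts through the surjection exhibiting $L_A(\nu)$ as a quotient of a submodule of $\Delta_A(\mu)$, yielding a nonzero map $P_A(\nu) \to \Delta_A(\mu)$. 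By Corollary \ref{Soe2}, $\Hom_{\OA}(P_A(\nu),\Delta_A(\mu))$ is a free $A$-module, so base change along $A\to\K$ gives a nonzero $\Hom_{\OC_\K}(P_\K(\nu),\Delta_\K(\mu))$, which in turn forces $L_\K(\nu)$ to be a composition factor of $\Delta_\K(\mu)$ and hence $\nu\sim_A\mu\sim_A\lambda$. Since $\OC_{A,\Lambda}$ is clearly closed under subquotients, this gives $P_A(\lambda)\in\OC_{A,[\lambda]}$.

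Next I would verify the Hom-vanishing $\Hom_{\OA}(M,N)=0$ whenever $M\in\OC_{A,\Lambda}$ and $N\in\OC_{A,\Lambda'}$ with $\Lambda\neq\Lambda'$: the image of any $\phi:M\to N$ is simultaneously a quotient of $M$ and a submodule of $N$, so a maximal weight $\mu$ of $\mathrm{Im}(\phi)$ produces a highest weight vector generating a simple subquotient $L_A(\mu)$ of both $M$ and $N$, forcing $\mu\in\Lambda\cap\Lambda'=\emptyset$. For an arbitrary $M\in\OA$ I would then choose a projective presentation $P_1\xrightarrow{d} P_0\to M\to 0$ from Theorem \ref{Fie2}(1), decompose $P_i=\bigoplus_\Lambda P_{i,\Lambda}$ by grouping indecomposable summands $P_A(\lambda)$ according to their $\sim_A$-class (each summand lying in $\OC_{A,\Lambda}$ by the first step), and use Hom-vanishing to split $d=\bigoplus_\Lambda d_\Lambda$. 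Setting $M_\Lambda:=\mathrm{coker}(d_\Lambda)\in\OC_{A,\Lambda}$ gives $M=\bigoplus_\Lambda M_\Lambda$. A further application of Hom-vanishing to morphisms in $\OA$ shows that the direct-sum functor is fully faithful, and together with the essential surjectivity just established it is an equivalence.

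The main obstacle is the first step: the relation $\sim_A$ is defined via composition multiplicities in $\OC_\K$, so establishing $P_A(\lambda)\in\OC_{A,[\lambda]}$ requires lifting simple-subquotient information from the residue field back to $A$. The key inputs are Corollary \ref{Soe2}, which guarantees freeness of $\Hom$-spaces from projectives to Verma-flagged modules and hence their compatibility with base change, together with Theorem \ref{Fie2}(3), which ensures that projective covers of simples behave correctly under $\cdot\otimes_A\K$. Once the composition-factor analysis is in place, the remaining pieces, namely closure of $\OC_{A,\Lambda}$ under subquotients, blockwise splitting of projective presentations, and the functoriality of the decomposition, follow routinely.
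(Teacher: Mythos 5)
Your statement is one the paper does not prove at all: it is quoted from Fiebig \cite{5}, Proposition 2.8, so there is no internal proof to compare against. On its own terms, your architecture (show each $P_A(\lambda)$ lies in a single block, prove Hom-vanishing between distinct classes, then split a projective presentation blockwise) is a legitimate and standard route to such a decomposition, but as written it has three concrete gaps. First, Theorem \ref{Fie2}(1) gives projective covers of the simples and says every projective is a sum of these; it does \emph{not} say $\OA$ has enough projectives, so the existence of a presentation $P_1\to P_0\to M\to 0$ for arbitrary $M\in\OA$ is unjustified as cited. It is true, but needs an argument, e.g.\ choose a surjection $\bigoplus P_\K(\lambda_i)\twoheadrightarrow M\otimes_A\K$ in $\OC_\K$, lift it to $\bigoplus P_A(\lambda_i)\to M$ by projectivity, and deduce surjectivity weight space by weight space from Nakayama; noetherianity then handles the kernel.

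Second, in your key step you invoke Corollary \ref{Soe2} for $\Hom_{\OA}(P_A(\nu),\Delta_A(\mu))$, but that corollary requires the target to carry a $\nabla_A$-flag, which $\Delta_A(\mu)$ does not have in general; and even granted freeness, a nonzero map can die after $\otimes_A\K$. What you actually need is Theorem \ref{Fie2}(3) together with Nakayama: the Hom-space is a nonzero finitely generated $A$-module (it embeds into finitely many weight spaces of $\Delta_A(\mu)$), hence has nonzero fiber, giving $\Hom_{\OC_\K}(P_\K(\nu),\Delta_\K(\mu))\neq 0$ and so $[\Delta_\K(\mu):L_\K(\nu)]\neq 0$. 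Third, $\OC_{A,\Lambda}$ is defined by a condition on the highest weights of \emph{all} subquotients, not just simple ones, so your closing sentence ``since $\OC_{A,\Lambda}$ is closed under subquotients, this gives $P_A(\lambda)\in\OC_{A,[\lambda]}$'' is a non sequitur: you still must show that any highest weight $\nu$ of an arbitrary subquotient is detected by a simple subquotient, e.g.\ a maximal-weight vector $v$ of weight $\nu$ generates a highest weight module $U(\g)v$ whose reduction $U(\g)v/\mathfrak{m}\,U(\g)v$ is a nonzero highest weight module over $\K$ surjecting onto $L_\K(\nu)=L_A(\nu)$, so $L_A(\nu)$ is a simple subquotient with the same highest weight; one also needs the routine refinement argument placing any simple subquotient of $P_A(\lambda)$ inside one of its Verma flag layers before your analysis of $\Delta_A(\mu)$ applies. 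With these repairs the proof goes through.
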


\begin{remark}
For $R=S_{(0)}$ the localization of $S$ at the maximal ideal generated by $\h$,
the block decomposition of $\OC_R$ corresponds to the block decomposition of the BGG-category $\OC$ over $\g$.\\
\end{remark}

Let $\tau:S \rightarrow \K$ be the induced map that makes $\K$ into an $S$-algebra. Restricting to $\h$ and extending with $\K$ yields a $\K$-linear map $\h \otimes \K \rightarrow \K$ which we will also call $\tau$. Let $\mathcal{R} \supset \mathcal{R}^{+}$ be the root system with positive roots according to our data $\g\supset \lb \supset \h$. For $\lam \in \h_{\K}^{*}=\Hom_\K(\h \otimes \K,\K)$ and $\check{\alpha} \in \h$ the dual root of a root $\alpha \in \mathcal{R}$ we set $\left\langle \lam, \check{\alpha}\right\rangle_{\K} = \lam(\check{\alpha}) \in \K$. Let $\mathcal{W}$ be the Weyl group of $(\g,\h)$ and denote by $s_\alpha$ the reflection corresponding to $\alpha \in \mathcal{R}$.

\begin{definition}
For $\mathcal{R}$ the root system of $\g$ and $\Lambda \in \h^{*}/\sim_A$ we define 
$$\mathcal{R}_A(\Lambda)=\{\alpha \in \mathcal{R} | \left\langle \lam + \tau, \check{\alpha}\right\rangle_{\K} \in \Z \subset \K \text{ for some } \lam \in \Lambda\}$$
and call it the integral roots corresponding to $\Lambda$. Let $\mathcal{R}_A^{+}(\Lambda)$ denote the positive roots in $\mathcal{R}_A(\Lambda)$ and set $$\mathcal{W}_A(\Lambda)=\langle \{s_\alpha \in \mathcal{W} | \alpha \in \mathcal{R}_A^{+}(\Lambda)\}\rangle \subset \mathcal{W}$$
We call it the integral Weyl group with respect to $\Lambda$.
\end{definition}
From \cite{5} Corollary 3.3 it follows that
$$\Lambda=\mathcal{W}_A(\Lambda)\cdot \lam \text{  for any  } \lam \in \Lambda$$
where we denote by $\cdot$ the $\rho$-shifted dot-action of the Weyl group.\\
Since most of the following constructions commute with base change, we are particularly interested in the case when $A=R_{\p}$ is a localization of $R$ at a prime ideal $\p$ of height one. The functor $\cdot \otimes_R R_{\p}$ will split the deformed category $\OA$ into generic and subgeneric blocks:

\begin{lemma}[\cite{6}, Lemma 3] \label{Fie4}
Let $\Lambda \in \h^{*}/\sim_{R}$ and let $\p \in R$ be a prime ideal.
\begin{enumerate}
\item If $\check{\alpha} \notin \p$ for all roots $\alpha \in \mathcal{R}_{R}(\Lambda)$, then $\Lambda$ splits under $\sim_{R_\p}$ into generic equivalence classes.
\item If $\p = R\check{\alpha}$ for a root $\alpha \in \mathcal{R}_{R}(\Lambda)$, then $\Lambda$ splits under $\sim_{R_\p}$ into subgeneric equivalence classes of the form $\{\lam,s_{\alpha}\cdot \lam\}$.
\end{enumerate}
\end{lemma}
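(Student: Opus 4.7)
The plan is to determine the integral Weyl group $\mathcal{W}_{R_\p}(\Lambda')$ for each new equivalence class $\Lambda' \subset \Lambda$ under $\sim_{R_\p}$, and then invoke Corollary 3.3 of \cite{5} to deduce $\Lambda' = \mathcal{W}_{R_\p}(\Lambda') \cdot \lam'$. Since $R = S_{(0)}$ is a regular local UFD, the prime $\p$ is principal, generated by an irreducible $f \in S$ lying in the maximal ideal, and the residue field is $\K_\p = R_\p/\p R_\p \cong \mathrm{Frac}(S/(f))$. The key input I will establish is the following dichotomy for any coroot $\check{\beta} \in \h$: either $\p = R\check{\beta}$ (and then $\tau(\check{\beta}) = 0$ in $\K_\p$), or $\tau(\check{\beta})$ is transcendental over $\C \subset \K_\p$. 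Indeed, if $\tau(\check{\beta}) = c \in \C$, then $\check{\beta} - c \in \p$; since $f$ has vanishing constant term while $\check{\beta} - c$ is of degree one, this forces $c = 0$ and $f$ to be a scalar multiple of $\check{\beta}$.

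For case (1), fix $\Lambda' \subset \Lambda$ and test each $\beta \in \mathcal{R}$ for membership in $\mathcal{R}_{R_\p}(\Lambda')$, i.e., whether $\lam'(\check{\beta}) + \tau(\check{\beta}) \in \Z \subset \K_\p$ for some $\lam' \in \Lambda'$. If $\beta \in \mathcal{R}_R(\Lambda)$, then by hypothesis $\p \neq R\check{\beta}$, so $\tau(\check{\beta})$ is transcendental over $\C$ and the sum cannot land in $\Z$. If $\beta \notin \mathcal{R}_R(\Lambda)$, then $\lam'(\check{\beta}) \notin \Z$ for every $\lam' \in \Lambda$, so integrality of the sum would require $\tau(\check{\beta}) \in \C$, hence $\p = R\check{\beta}$ and $\tau(\check{\beta}) = 0$; the sum then equals $\lam'(\check{\beta})$, still not in $\Z$. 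Consequently $\mathcal{R}_{R_\p}(\Lambda') = \emptyset$, $\mathcal{W}_{R_\p}(\Lambda')$ is trivial, and each $\Lambda'$ is a singleton, which is the generic case.

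For case (2), with $\p = R\check{\alpha}$ for some $\alpha \in \mathcal{R}_R(\Lambda)$, I first observe that $\tau(\check{\alpha}) = 0$ and $\lam'(\check{\alpha}) \in \Z$, so $\alpha \in \mathcal{R}_{R_\p}(\Lambda')$. For any $\beta \neq \pm \alpha$, $\check{\beta}$ is not proportional to $\check{\alpha}$, hence $\check{\beta} \notin \p$, $\tau(\check{\beta})$ is transcendental, and the argument of case (1) again rules $\beta$ out of $\mathcal{R}_{R_\p}(\Lambda')$. Therefore $\mathcal{R}_{R_\p}(\Lambda') = \{\pm\alpha\}$, $\mathcal{W}_{R_\p}(\Lambda') = \langle s_\alpha \rangle$, and each equivalence class takes the form $\{\lam', s_\alpha \cdot \lam'\}$, which may collapse to a singleton if $\lam'$ is $\alpha$-dot-fixed. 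The only nontrivial step is the dichotomy for $\tau(\check{\beta})$; once it is in place, both cases follow directly from the definitions of $\mathcal{R}_A(\Lambda)$ and $\mathcal{W}_A(\Lambda)$.
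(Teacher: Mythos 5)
The paper itself offers no proof of this lemma (it is quoted from \cite{6}, Lemma 3), and your overall route --- compute the integral roots of each class over the residue field of $R_\p$ and then invoke \cite{5}, Corollary 3.3 to identify the $\sim_{R_\p}$-classes with orbits of the corresponding integral Weyl group --- is exactly the standard argument behind the cited result. One justification, however, is wrong as stated: in a UFD only height-one primes are principal, so in case (1), where $\p$ is an arbitrary prime of $R=S_{(0)}$ (possibly of height $\geq 2$), the claims ``$\p$ is principal, generated by an irreducible $f$'' and $\K_\p\cong \mathrm{Frac}(S/(f))$ fail, and the dichotomy should be phrased with ``$\check{\beta}\in\p$'' rather than ``$\p=R\check{\beta}$''. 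Fortunately the statement you actually use is true for every prime and has a one-line proof: $\p$ lies in the maximal ideal of the local ring $R$, and for $c\in\C$, $c\neq 0$, the element $\check{\beta}-c$ takes the nonzero value $-c$ at $0$, hence is a unit of $R$ and cannot lie in $\p$; therefore $\tau(\check{\beta})\in\C$ forces $\check{\beta}\in\p$ and $\tau(\check{\beta})=0$, while $\check{\beta}\notin\p$ gives $\tau(\check{\beta})\notin\C$ (equivalently transcendental, $\C$ being algebraically closed), which is all your case analysis needs. With this repair, case (2) also goes through as you wrote it: since $\p=R\check{\alpha}$ and the root system of a semisimple $\g$ is reduced, the only coroots in $\p$ are $\pm\check{\alpha}$.

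A second, smaller point should be made explicit: you test whether $\beta\in\mathcal{R}_{R_\p}(\Lambda')$ against weights $\lam'\in\Lambda$, which tacitly assumes $\Lambda'\subset\Lambda$ --- part of what is being proved. The clean way is to argue weight by weight: for a fixed $\lam\in\Lambda$, integrality is constant on $\Lambda$ (so $\langle\lam+\tau,\check{\beta}\rangle\in\Z$ over $\C$ exactly for $\beta\in\mathcal{R}_R(\Lambda)$), and the dichotomy then shows that the roots integral at $\lam$ over the residue field of $R_\p$ are precisely $\{\beta\in\mathcal{R}_R(\Lambda)\,:\,\check{\beta}\in\p\}$, i.e.\ empty in case (1) and $\{\pm\alpha\}$ in case (2). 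Corollary 3.3 of \cite{5} then identifies the $\sim_{R_\p}$-class of $\lam$ with the orbit of $\lam$ under the group generated by the corresponding reflections, which simultaneously gives the refinement $\Lambda'\subset\Lambda$ and the asserted shape of the classes ($\{\lam\}$, respectively $\{\lam,s_\alpha\cdot\lam\}$, the latter possibly degenerating to a singleton). Once these two points are patched, your proof is correct and coincides in substance with the argument of the cited source.
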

We recall that we denote by $P_A(\lam)$ the projective cover of the simple object $L_A(\lam)$. It is indecomposable and up to isomorphism uniquely determined.
For an equivalence class $\Lambda \in \h^{*}/\sim_A$ which contains $\lam$ and is generic, i.e., $\Lambda=\{\lam\}$, we get $P_A(\lam)=\Delta_A(\lam)$. If $\Lambda=\{\lam,\mu\}$ and $\mu< \lam$, we have $P_A(\lam)=\Delta_A(\lam)$ and there is a non-split short exact sequence in $\OA$
$$0\rightarrow \Delta_A(\lam) \rightarrow P_A(\mu) \rightarrow \Delta_A(\mu)\rightarrow 0$$ 
In this case, every endomorphism $f: P_A(\mu) \rightarrow P_A(\mu)$ maps $\Delta_A(\lam)$ to $\Delta_A(\lam)$ since $\lam>\mu$. So $f$ induces a commutative diagram 
\begin{eqnarray*}
\begin{CD}
   0   @>>> \Delta_{A}(\lam) @>>> P_{A}(\mu) @>>> \Delta_{A}(\mu) @>>> 0\\
   @VVV @V f_\lam VV @VV f V @VV f_\mu V @VVV\\
   0   @>>> \Delta_{A}(\lam) @>>> P_{A}(\mu) @>>> \Delta_{A}(\mu) @>>> 0 
\end{CD}\end{eqnarray*}
Since endomorphisms of Verma modules correspond to elements of $A$, we get a map
\begin{displaymath}
		\begin{array}{ccc}
		\chi: \End_{\OC_{A}}(P_{A}(\mu))& \longrightarrow & A \oplus A \\
		f & \longmapsto &(f_\lam , f_\mu)  
		\end{array}
\end{displaymath}

For $\p=R\check{\alpha}$ we define $R_\alpha :=R_\p$ for the localization of $R$ at the prime ideal $\p$.

\begin{prop}[\cite{5}, Corollary 3.5] \label{Fie5}
Let $\Lambda \in \h^{*}/\sim_{R_\alpha}$.
If $\Lambda=\{\lam,\mu\}$ and $\lam=s_{\alpha}\cdot \mu >\mu$, the map $\chi$ from above induces an isomorphism of $R_\alpha$-modules
$$\End_{\OC_{R_\alpha}}(P_{R_\alpha}(\mu)) \cong \left\{(t_\lam,t_\mu) \in {R_\alpha}\oplus {R_\alpha} \middle| t_\lam \equiv t_\mu \text{ mod } \check{\alpha}\right\}$$
\end{prop}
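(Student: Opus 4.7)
The plan is to establish the isomorphism by embedding $\End_{\OC_{R_\alpha}}(P_{R_\alpha}(\mu))$ into $\End_{\OC_Q}(P_{R_\alpha}(\mu) \otimes_{R_\alpha} Q) \cong Q \oplus Q$ (where $Q := \mathrm{Frac}(R_\alpha)$) and then identifying its image as the congruence subring by analyzing the position of $P_{R_\alpha}(\mu)$ inside $\Delta_Q(\lam) \oplus \Delta_Q(\mu)$.

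First, for injectivity of $\chi$, I would invoke Lemma~\ref{Fie4}: over $Q$ the class $\Lambda$ splits generically into two singletons (the pairing $\langle \mu + \rho + \tau, \check{\alpha} \rangle$ ceases to be integral once $\check{\alpha}$ is inverted, so we fall into case (1) applied to the zero prime), hence $P_{R_\alpha}(\mu) \otimes_{R_\alpha} Q \cong \Delta_Q(\lam) \oplus \Delta_Q(\mu)$ and $\End_{\OC_Q}(P_Q) \cong Q \oplus Q$, under which identification $\chi \otimes_{R_\alpha} Q$ is the identity map. Since $P_{R_\alpha}(\mu)$ is $R_\alpha$-free through its Verma flag, $\End_{\OC_{R_\alpha}}(P_{R_\alpha}(\mu))$ is torsion free over the DVR $R_\alpha$ and embeds into $\End_{\OC_Q}(P_Q) \cong Q \oplus Q$; in particular $\chi$ is injective and $\mathrm{Im}(\chi) \subset R_\alpha \oplus R_\alpha$.

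Second, for the identification of the image I would pick a weight $(\mu + \tau)$ generator $\tilde{v}_\mu$ of $P_{R_\alpha}(\mu)$; in $P_Q \cong \Delta_Q(\lam) \oplus \Delta_Q(\mu)$ it decomposes as $\tilde{v}_\mu = (y, v_\mu)$ for some $y$ in the $(\mu + \tau)$-weight space of $\Delta_Q(\lam)$. The non-splitting of the defining SES forces $y \notin \Delta_{R_\alpha}(\lam)$, while the conditions $e_\beta \tilde{v}_\mu \in \Delta_{R_\alpha}(\lam)$ for every positive root $\beta$ --- most critically at $\beta = \alpha$, where the deformed $\mathfrak{sl}_2$-identity $e_\alpha f_\alpha^n v_\lam = n \tau(\check{\alpha}) f_\alpha^{n-1} v_\lam$ (with $n = -\langle \mu + \rho, \check{\alpha}\rangle$) appears --- pin $y$ down to have a pole of order exactly one at $\check{\alpha}$, namely $y \in \check{\alpha}^{-1} \Delta_{R_\alpha}(\lam) \setminus \Delta_{R_\alpha}(\lam)$. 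An endomorphism $\phi = (a, b) \in \End_{\OC_Q}(P_Q) \cong Q \oplus Q$ then acts on $\tilde{v}_\mu$ as $(ay, bv_\mu) = b \tilde{v}_\mu + (a - b) y$, so it preserves $P_{R_\alpha}(\mu)$ if and only if $a, b \in R_\alpha$ (so that $\phi$ preserves $\Delta_{R_\alpha}(\lam)$ and $\Delta_{R_\alpha}(\mu)$) and $(a - b) y \in \Delta_{R_\alpha}(\lam)$; by the pole structure of $y$ this second condition is precisely $\check{\alpha} \mid (a - b)$. This identifies $\mathrm{Im}(\chi)$ with the congruence subring, and combined with the injectivity from the first step completes the isomorphism.

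The principal technical obstacle is the step showing that $y$ has a simple pole, not a higher-order pole, at $\check{\alpha}$. The non-split condition immediately forces the order to be $\geq 1$, while the integrality $e_\alpha y \in \Delta_{R_\alpha}(\lam)$ combined with $e_\alpha f_\alpha^n v_\lam \in \tau(\check{\alpha}) \cdot \Delta_{R_\alpha}(\lam)$ caps the order at exactly $1$; equivalently this is the statement that the defining SES represents a class of order precisely $\check{\alpha}$ in $\Ext^1_{\OC_{R_\alpha}}(\Delta_{R_\alpha}(\mu), \Delta_{R_\alpha}(\lam)) \cong R_\alpha/\check{\alpha} R_\alpha$. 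This is the single place where the coroot $\check{\alpha}$ enters the analysis, via the deformed commutator $[e_\alpha, f_\alpha] = \check{\alpha}$ acting on the BGG-embedding vector in $\Delta_{R_\alpha}(\lam)$; for non-simple $\alpha$ the argument extends by replacing $f_\alpha^n v_\lam$ with the corresponding $\mathfrak{n}^+$-singular vector, with the same pole-order-one conclusion.
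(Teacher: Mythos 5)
Your reduction is essentially correct and is a genuinely different, more elementary route than the one the paper relies on: the paper gives no argument at all but cites Fiebig's Corollary 3.5 in \cite{5}, where the subgeneric endomorphism ring is obtained via translation functors and the structure of subgeneric blocks. Your steps that do work: injectivity of $\chi$ and the identification $\End_{\OC_{R_\alpha}}(P_{R_\alpha}(\mu))=\{\phi\in\End_{\OC_Q}(P_Q)\,:\,\phi(P_{R_\alpha}(\mu))\subseteq P_{R_\alpha}(\mu)\}$ via the generic splitting over $Q$; the equivalence that $\phi=(a,b)$ preserves $P_{R_\alpha}(\mu)$ if and only if $a,b\in R_\alpha$ and $(a-b)y\in\Delta_{R_\alpha}(\lam)$ (using $P_{R_\alpha}(\mu)\cap\Delta_Q(\lam)=\Delta_{R_\alpha}(\lam)$); and the lower bound $y\notin\Delta_{R_\alpha}(\lam)$ from non-splitness. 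Two small points deserve a line each: that $P_{R_\alpha}(\mu)$ is generated by a single $\mu$-weight vector needs the simple head of $P_\K(\mu)$ plus a weight-space-wise Nakayama argument (the module is not finitely generated over $R_\alpha$), and one should note that checking $\phi$-stability on $\tilde v_\mu$ suffices precisely because $\phi$ commutes with the $\g$-$R_\alpha$-action on this generator.

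The genuine gap is the upper bound on the pole order when $\alpha$ is not simple. Your $\mathfrak{sl}_2$-identity argument is complete only when $\Delta_{R_\alpha}(\lam)_\mu$ has rank one over $R_\alpha$, which happens exactly when $\alpha$ is simple (then $n\alpha$ has a unique partition into positive roots). In general, writing $y=\check{\alpha}^{-k}m$ with $m\notin\check{\alpha}\Delta_{R_\alpha}(\lam)_\mu$, the available conditions are $e_\beta m\in\check{\alpha}^{k}\Delta_{R_\alpha}(\lam)$ for all simple $\beta$; reducing mod $\check{\alpha}$ only tells you that $\bar m$ is proportional to the singular vector $\bar s\in\Delta_\K(\lam)_\mu$, and to exclude $k\geq 2$ you must show that the first-order obstructions $\overline{\check{\alpha}^{-1}e_\beta s}$ cannot all be cancelled by $e_\beta\bar m_1$ for a single correction $\bar m_1\in\Delta_\K(\lam)_\mu$ --- i.e.\ that a certain cocycle is not a coboundary, equivalently that $\Ext^1_{\OC_{R_\alpha}}(\Delta_{R_\alpha}(\mu),\Delta_{R_\alpha}(\lam))$ is killed by $\check{\alpha}$. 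Your final sentence asserts this rather than proves it; in the simple-root case the corrections are harmless only because every vector of the one-dimensional weight space is singular mod $\check{\alpha}$, and that is exactly what fails for non-simple $\alpha$. A clean repair: the deformed Shapovalov determinant of $\Delta_{R_\alpha}(\lam)_\mu$ has $\check{\alpha}$-valuation exactly one (among the factors $\langle\lam+\rho+\tau,\check{\beta}\rangle-k$ only the one with $\beta=\alpha$, $k=n$ is a non-unit in $R_\alpha$, and it occurs with exponent $P(0)=1$); if some $m\notin\check{\alpha}\Delta_{R_\alpha}(\lam)_\mu$ satisfied $U(\mathfrak{n}^+)^{+}m\subseteq\check{\alpha}^2\Delta_{R_\alpha}(\lam)$, then in a basis of $\Delta_{R_\alpha}(\lam)_\mu$ containing $m$ an entire row of the Gram matrix would be divisible by $\check{\alpha}^2$, contradicting valuation one. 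Alternatively one can reduce to the $\mathfrak{sl}_2$-situation by translation functors, but that is essentially the route of the cited source.
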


\subsection{Deformed tilting modules}

In this chapter, $A$ will be a localization of $R=S_{(0)}$ at a prime ideal $\p \subset R$ and $\K$ its residue field. Let $\lam \in \h^{*}$ be such that $\Delta_{\K}(\lam)$ is a simple object in $\OC_{\K}$. Thus, we have $\Delta_{\K}(\lam) \cong \nabla_{\K}(\lam)$ and the canonical inclusion $\Delta_A(\lam) \hookrightarrow \nabla_A(\lam)$ becomes an isomorphism after applying $\cdot \otimes_A \K$. So by Nakayama's lemma, we conclude that this inclusion is bijective.

\begin{definition}
By $\mathcal{K}_A$ we denote the full subcategory of $\OA$ which
\begin{enumerate}
\item includes the self-dual deformed Verma modules,
\item is stable under tensoring with finite dimensional $\g$-modules,
\item is stable under forming direct sums and summands. 
\end{enumerate}
\end{definition}

\begin{prop}(\cite{11}, Proposition 3.2.) \label{tilt1}
The base change $\cdot \otimes_A \K$ gives a bijection\\	
\begin{displaymath}
		\begin{array}{ccc}
		
			\left\{\begin{array}{c}
        \textrm{isomorphism classes}\\
      	\textrm{of $\mathcal{K}_A$}
    	\end{array}\right\}
		&
		\longleftrightarrow
		&
			\left\{\begin{array}{c}
        \textrm{isomorphism classes}\\
      	\textrm{of $\mathcal{K}_\K$}
    	\end{array}\right\}
		\end{array}
	\end{displaymath}
\end{prop}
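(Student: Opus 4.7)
The plan is to mimic Fiebig's proof of the analogous bijection for projectives (Theorem \ref{Fie1}(2)), exploiting that $\mathcal{K}_A$ is controlled by modules that carry both a $\Delta_A$-flag and a $\nabla_A$-flag. The decisive tool will be Corollary \ref{Soe2}, which guarantees that Hom spaces between such modules are free and finitely generated over $A$ and commute with the base change $\cdot\otimes_A\K$.

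First I would verify that every $M\in\mathcal{K}_A$ carries both a $\Delta_A$-flag and a $\nabla_A$-flag. Self-dual Vermas satisfy this by definition; tensoring with a finite-dimensional $\g$-module preserves both types of flag by the usual argument; direct sums preserve them trivially; and direct summands preserve them by the standard Soergel/Ringel argument using $\Ext^1_{\OA}(\Delta_A(\lam),\nabla_A(\mu))=0$ from Proposition \ref{Soe1}(3). Consequently, Corollary \ref{Soe2} applies to any pair $M,N\in\mathcal{K}_A$ and yields
$$\Hom_{\OA}(M,N)\otimes_A \K \stackrel{\sim}{\longrightarrow} \Hom_{\OC_\K}(M\otimes_A\K,N\otimes_A\K).$$

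For injectivity, given $M,N\in\mathcal{K}_A$ with $\bar f: M\otimes_A\K\stackrel{\sim}{\to} N\otimes_A\K$, this isomorphism of Hom spaces lifts $\bar f$ to $f: M\to N$; since every weight space of $M$ and $N$ is finitely generated free over $A$ of the same rank (weight spaces of deformed Vermas are, and the ranks match because $\bar f$ is an isomorphism), Nakayama applied weight-by-weight forces each $f_\nu: M_\nu\to N_\nu$ to be an isomorphism, hence $f$ itself is an isomorphism. For surjectivity, given $\bar M\in\mathcal{K}_\K$ I would construct a lift in $\mathcal{K}_A$ by induction on how $\bar M$ is built from self-dual Vermas: each self-dual Verma $\Delta_\K(\lam)=\nabla_\K(\lam)$ is the base change of $\Delta_A(\lam)$, and tensoring with finite-dimensional $\g$-modules commutes with $\cdot\otimes_A\K$. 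For a summand, having a lift $M\in\mathcal{K}_A$ of $\bar M=\bar N\oplus \bar N'$, I would lift the corresponding idempotent $\bar e\in\End_{\OC_\K}(\bar M)$ to an idempotent $e\in\End_{\OA}(M)$; then $eM\in\mathcal{K}_A$ is the required lift of $\bar N$.

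The main technical point is this last idempotent lifting. Since $\End_{\OA}(M)$ is finitely generated over the local ring $A$ by the first step, $\mathfrak{m}\End_{\OA}(M)$ sits inside the Jacobson radical of $\End_{\OA}(M)$ and the quotient $\End_{\OC_\K}(\bar M)$ is a finite-dimensional $\K$-algebra. This is exactly the setting in which Fiebig carries out idempotent lifting in his proof of Theorem \ref{Fie1}(2), and I would import the same argument verbatim; with that ingredient in place, everything else in the proof is formal.
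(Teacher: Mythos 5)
Your skeleton --- objects of $\mathcal{K}_A$ carry both a $\Delta_A$- and a $\nabla_A$-flag, Corollary \ref{Soe2} then makes $\Hom$-spaces commute with base change, injectivity on isomorphism classes follows by Nakayama, and surjectivity reduces to lifting direct summands --- is the natural one (note the paper itself gives no argument and simply cites \cite{11}). The genuine gap is the final step, which is also the crux of the whole proposition. Lifting idempotents along $\End_{\OA}(M)\twoheadrightarrow \End_{\OA}(M)\otimes_A\K$ is \emph{not} a formal consequence of the two facts you invoke ($\mathfrak{m}\End_{\OA}(M)$ lies in the Jacobson radical and the quotient is finite-dimensional): idempotents lift modulo an ideal contained in the radical only when that ideal is nil or the algebra is complete (more generally Henselian) with respect to it. Here $A$ is a localization of $S_{(0)}$ at a prime ideal, hence not complete, and $\End_{\OA}(M)$ is just an $A$-order in a split semisimple algebra over the quotient field; for such orders lifting can genuinely fail. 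For instance, with $A=\C[t]_{(t)}$ the free rank-$2$ algebra $B=A[x]/(x^2-(1+t))$ is a domain, so it has no nontrivial idempotents, while $B\otimes_A\C\cong\C\times\C$ does, even though $\mathfrak{m}B$ lies in the Jacobson radical of $B$. So the radical containment alone cannot split off the summand $\bar N$ over $A$.

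The appeal to Theorem \ref{Fie1}(2) does not rescue this, because the projective analogue is not proved by idempotent lifting: there one uses that a simple quotient of $P\otimes_A\K$ yields a surjection $P\twoheadrightarrow P_A(\lam)$ which splits since $P_A(\lam)$ is projective (cf.\ Theorem \ref{Fie2}(1)), a mechanism with no counterpart for tilting objects. To close the gap you must actually produce the indecomposable deformed tilting modules $K_A(\lam)$ over the non-complete ring $A$ with $K_A(\lam)\otimes_A\K\cong K_\K(\lam)$ --- for example by transporting $P_A(\lam)$ through the Arkhipov--Soergel tilting equivalence (the route suggested by the title of \cite{11}), by a direct universal-extension construction of a self-dual module with Verma flag and highest weight $\lam$, or by first working over a complete deformation algebra (where idempotents do lift) and then addressing descent to $A$. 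Once the $K_A(\lam)$ are available, surjectivity follows from the decomposition of any object of $\mathcal{K}_\K$ into indecomposable tilting modules, and your injectivity argument completes the bijection. The remaining parts of your write-up are fine, with the minor caveat that ``direct summands inherit Verma flags'' requires the deformed Ext-criterion for the existence of a $\Delta_A$-flag (vanishing of $\Ext^1_{\OA}(M,\nabla_A(\mu))$ for all $\mu$), not merely Proposition \ref{Soe1}(3) itself.
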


\begin{remark}
For $A=S/S\h=\C$ the category $\mathcal{K}_A$ is just the usual subcategory of tilting modules of the category $\OC$ over $\g$.
The definition also implies that deformed tilting modules have a Verma flag and are self-dual. Furthermore, the indecomposable tilting modules are classified by their highest weight and we denote by $K_A(\lam)$ the deformed tilting module with highest weight $\lam \in \h^*$.
\end{remark}

\section{Tilting modules as sheaves on moment graphs}

In this section we repeat the connection between representation theory and sheaves on moment graphs via the structure functor $\VV$ as it is described in \cite{7}. We prove without using the tilting functor that tilting modules of a deformed block in category $\OC$ become certain BMP-sheaves on a certain moment graph associated to this block. As a corollary of this we get character formulas of tilting modules without using the tilting functor.

\subsection{The functor $\VV$}

Again, $A$ denotes a localization of $R$ at a prime ideal. We first want to get a functor from a block of deformed category $\OC$ to sheaves on a certain moment graph. Given an antidominant weight $\lam \in \h^*$ denote by $\Lambda \in \h^* /\sim_A$ its equivalence class. We now set $\mathcal{W}_\lam = \mathcal{W}_A (\Lambda)$ and $\mathcal{W}':= \mathrm{Stab}_{\mathcal{W}_\lam} (\lam)$. We then get a bijection
$$\mathcal{W}_\lam \cdot \lam \cong \mathcal{W}_\lam / \mathcal{W}'$$
Now we define the ordered $\h^*$-moment graph $\G=(\V,\E,\alpha, \leq)$ associated to the block $\mathcal{W}_\lam \cdot \lam$ by letting
 $$\V := \mathcal{W}_\lam / \mathcal{W}'$$
 and two different vertices $x,y \in \V$ are joined by an edge $E=\{x,y\}$ if there is a positive root $\alpha \in \mathcal{R}_A^+(\Lambda)$ with $x=s_\alpha \cdot y$. The labeling of $E$ is $\alpha(E)= \check{\alpha}$. For $w,w' \in \V$ we define the order $\leq$ by
$$w \leq w' \Leftrightarrow w \cdot \lam \leq w'\cdot \lam$$
This ordered moment graph has the GKM-property. Note that this order is not the Bruhat order in general. But for two adjacent vertices both orderings coincide.

\begin{thm}[\cite{5}, Theorem 3.6.]
Let $\ZZ= \ZZ_A (\G)$ be the structure algebra of the above moment graph. Then there is an isomorphism 
$$\ZZ \cong \End_{\OC_A}(P_A(\lam))$$
\end{thm}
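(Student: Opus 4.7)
The plan is to construct a canonical ring homomorphism
$$\chi \colon \End_{\OC_A}(P_A(\lam)) \longrightarrow \prod_{x \in \V} A$$
and to identify its image with $\ZZ$. First I would exploit that over the quotient field $Q$ of $A$ the class $\Lambda$ splits into singletons by Lemma \ref{Fie4}(1), so that Proposition \ref{Fie3} together with Theorem \ref{Fie2}(2) yields
$$P_A(\lam) \otimes_A Q \;\cong\; \bigoplus_{x \in \V} \Delta_Q(x \cdot \lam),$$
each summand being simple and self-dual over $Q$. Proposition \ref{Soe1} gives $\End_{\OC_Q}(P_A(\lam) \otimes_A Q) \cong \prod_{x \in \V} Q$, and Theorem \ref{Fie2}(3) upgrades this to
$$\End_{\OC_A}(P_A(\lam)) \otimes_A Q \;\stackrel{\sim}{\longrightarrow}\; \prod_{x \in \V} Q.$$
I would then define $\chi$ by composing the canonical inclusion $\End_{\OC_A}(P_A(\lam)) \hookrightarrow \End_{\OC_A}(P_A(\lam)) \otimes_A Q$ with this isomorphism. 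Both the injectivity of $\chi$ and the existence of the inclusion rest on the freeness of $\End_{\OC_A}(P_A(\lam))$ over $A$, which I would obtain by filtering $\Hom(P_A(\lam), P_A(\lam))$ along a Verma flag of the target and invoking Proposition \ref{Soe1}(1) together with Theorem \ref{Fie2}(3) applied to the residue field.

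Second, I would reduce identifying the image of $\chi$ with $\ZZ$ to a local check at height-one primes. Since $A$ is a regular local ring, hence a Krull domain, every principal fractional ideal $\alpha A \subset Q$ equals $\bigcap_\p \alpha A_\p$. Consequently both $\End_{\OC_A}(P_A(\lam))$, being free over $A$, and $\ZZ \subset \prod_{x \in \V} A$, being cut out by principal linear congruences, coincide with the intersection of their height-one localizations inside $\prod_{x \in \V} Q$. It therefore suffices to show that the localized map $\chi_{A_\p}$ is an isomorphism onto $\ZZ \otimes_A A_\p = \ZZ_{A_\p}(\G)$ for every height-one prime $\p \subset A$.

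The final step splits into the two cases of Lemma \ref{Fie4}. If $\check{\alpha} \notin \p$ for every $\alpha \in \mathcal{R}_A(\Lambda)$, then $\Lambda$ decomposes into singletons over $A_\p$; Proposition \ref{Fie3} and Theorem \ref{Fie2} give $P_{A_\p}(\lam) \cong \bigoplus_{x \in \V} \Delta_{A_\p}(x \cdot \lam)$, so $\End_{\OC_{A_\p}}(P_{A_\p}(\lam)) \cong \prod_{x \in \V} A_\p$. Simultaneously, every edge label of $\G$ becomes a unit in $A_\p$, hence $\ZZ_{A_\p}(\G) = \prod_{x \in \V} A_\p$, and $\chi_{A_\p}$ is visibly the identity. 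If instead $\p = A \check{\alpha}$ for some $\alpha \in \mathcal{R}_A^+(\Lambda)$, then Lemma \ref{Fie4}(2) partitions $\Lambda$ into two-element blocks $\{\mu, s_\alpha \cdot \mu\}$ with $\mu < s_\alpha \cdot \mu$; by Proposition \ref{Fie3}, $P_{A_\p}(\lam)$ decomposes as a direct sum of the indecomposable projectives $P_{A_\p}(\mu)$ over these orbits, and Proposition \ref{Fie5} identifies the endomorphism ring of each such summand with
$$\bigl\{ (t, t') \in A_\p \oplus A_\p \,\bigm|\, t \equiv t' \pmod{\check{\alpha}} \bigr\}.$$
On the moment-graph side, only the edges labelled $\check{\alpha}$ retain a nontrivial congruence at $\p$, and these edges are precisely the pairs $\{\mu, s_\alpha \cdot \mu\}$, so $\ZZ_{A_\p}(\G)$ is a product of exactly the same pair-congruence algebras. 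The main obstacle is the bookkeeping that matches these two descriptions orbit by orbit: one has to verify that the $\mu$- and $(s_\alpha \cdot \mu)$-coordinates of $\chi_{A_\p}$, defined via the generic fibre decomposition, coincide with the pair $(t, t')$ produced by Proposition \ref{Fie5}. Once this identification is in place, the intersection principle of the previous paragraph promotes the local isomorphisms $\chi_{A_\p}$ into the desired global isomorphism $\End_{\OC_A}(P_A(\lam)) \cong \ZZ$.
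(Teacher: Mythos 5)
The paper offers no proof of this theorem — it is quoted from Fiebig \cite{5}, Theorem 3.6 — and your argument reconstructs essentially that proof, in exactly the form the paper's preliminaries are set up for: generic decomposition over the quotient field $Q$, reduction to height-one primes using freeness/Krull-domain intersections and the base change of Theorem \ref{Fie2}(3), and identification of the subgeneric endomorphism rings with the edge congruences via Lemma \ref{Fie4} and Proposition \ref{Fie5}. The argument is correct as a sketch; the only points to make explicit are that the multiplicity-one decomposition $P_A(\lam)\otimes_A Q\cong\bigoplus_{x\in\V}\Delta_Q(x\cdot\lam)$ (and the freeness of $\Hom_{\OC_A}(P_A(\lam),\Delta_A(\mu))$, for which Proposition \ref{Soe1}(1) alone is not quite the right citation) rests on the antidominance of $\lam$, i.e.\ $[\Delta_\K(x\cdot\lam):L_\K(\lam)]=1$, and that for singular $\lam$ some $s_\alpha$-orbits on $\V$ are singletons, a harmless case your subgeneric bookkeeping should mention.
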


We now get a functor                                                                                                                                                                                                                                                                                                                                                                                                                                                                                                                                                                                                   

$$\VV := \Hom_{\OC_A}(P_A(\lam), \cdot) : \OC_{A, \lam} \longrightarrow \ZZ-mod$$

\begin{definition}
Denote by $\mathscr{V}_A(w)$ the skyscraper sheaf on $\G$ at the vertex $w \in \V$, i.e., the $A$-sheaf with $\mathscr{V}_A(w)_w \cong A$ and whose stalks at every other vertex and at every edge are zero.
\end{definition}

\begin{prop}
\begin{enumerate}
\item Let $w \in \V$. Then $\LL(\VV \Delta_A(w \cdot \lam)) \cong \LL(\VV \nabla_A(w \cdot \lam)) \cong \mathscr{V}_A(w)$.

\item Let $M \in \OC_{A,\lam}$ admit a Verma- or a Nabla-flag. Then $\VV M$ is a free $A$-module of finite rank.
\end{enumerate}
\end{prop}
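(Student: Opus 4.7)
The plan is to analyze both parts by passing to the two canonical base changes of $A$: the residue field $\K$ (where composition factors can be counted) and the quotient field $Q$ (where the category $\OC_Q$ is semisimple, so that Vermas, nablas and simples coincide). The identification $\End_{\OA}(P_A(\lam))\cong\ZZ$ will then let me match the ring-theoretic idempotents $e_x\in\ZZ\otimes_A Q\cong\prod_{x\in\V}Q$ with the projections onto the Verma summands of $P_Q(\lam)$.

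For the $\nabla$-side of part~(1), my first step is to induct on the Verma flag of $P_A(\lam)$ (Theorem~\ref{Fie2}(2)) using Proposition~\ref{Soe1}: parts (1) and (2) of that proposition give $\Hom_{\OA}(\Delta_A(\mu),\nabla_A(w\cdot\lam))\cong A$ for $\mu=w\cdot\lam$ and zero otherwise, and part (3) forces the induced short exact sequences to split. This yields a free $A$-module $\VV\nabla_A(w\cdot\lam)\cong A^{(P_A(\lam):\Delta_A(w\cdot\lam))}$. By BGG reciprocity (Theorem~\ref{Fie2}(2)) this rank equals $[\Delta_\K(w\cdot\lam):L_\K(\lam)]$, and this multiplicity is $1$ because $L_\K(\lam)=\Delta_\K(\lam)$ is antidominant and occurs in $\Delta_\K(w\cdot\lam)$ exactly once, namely as its simple socle. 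For the $\Delta$-side I would instead combine Theorem~\ref{Fie2}(3) with the same multiplicity count to identify $\VV\Delta_A(w\cdot\lam)\otimes_A\K$ with $\Hom_{\OC_\K}(P_\K(\lam),\Delta_\K(w\cdot\lam))\cong\K$; together with the $A$-torsion freeness of the $A$-Hom space (inherited from $\Delta_A(w\cdot\lam)$), Nakayama's lemma then forces $\VV\Delta_A(w\cdot\lam)\cong A$.

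Next I pass to the generic fiber to compute the localization. Since $\OC_Q$ is semisimple we have $P_Q(\lam)\cong\bigoplus_{x\in\V}\Delta_Q(x\cdot\lam)^{n_x}$ with each $n_x\geq 1$, and under $\End_{\OA}(P_A(\lam))\otimes_A Q\cong\ZZ\otimes_A Q\cong\prod_{x\in\V}Q$ the idempotent $e_x$ corresponds to projection onto the $\Delta_Q(x\cdot\lam)$-isotypic summand. Consequently $e_x\VV\Delta_A(w\cdot\lam)=0$ for $x\neq w$ (since $\Hom_{\OC_Q}(\Delta_Q(x\cdot\lam),\Delta_Q(w\cdot\lam))=0$), while $e_w\VV\Delta_A(w\cdot\lam)$ equals all of $\VV\Delta_A(w\cdot\lam)\cong A$. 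Via the stalk formula $\LL(M)_x=e_x M$ this yields the prescribed vertex stalks, and the push-out definition of $\LL(M)_E$ forces every edge stalk to vanish because at least one of the two vertex stalks bordering each edge is already zero. The same argument applies verbatim to $\nabla_A(w\cdot\lam)$.

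For part~(2), if $M$ carries a $\nabla_A$-flag then Corollary~\ref{Soe2} applied to $P_A(\lam)$ (which has a Verma flag by Theorem~\ref{Fie2}(2)) and $M$ yields directly that $\VV M$ is a finitely generated free $A$-module. If $M$ carries a $\Delta_A$-flag I induct on the flag length: projectivity of $P_A(\lam)$ makes $\Hom_{\OA}(P_A(\lam),-)$ exact, so a short exact sequence $0\to\Delta_A(w\cdot\lam)\to M\to M'\to 0$ produces a short exact sequence whose outer terms are free of finite rank by part~(1) and the inductive hypothesis; extensions of finite-rank free modules over the local ring $A$ remain free. The main obstacle will be pinning down the identification of the idempotents $e_x\in\ZZ\otimes_A Q$ with the projections onto the Verma summands of $P_Q(\lam)$; once this correspondence is in hand, the rest of the argument reduces to bookkeeping.
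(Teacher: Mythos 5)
Your overall architecture is sound and is essentially the standard argument that the paper itself defers to (its proof is just the reference to \cite{10}, Proposition 4.13, which runs along exactly these lines: decompose over $Q$, match idempotents with Verma summands of $P_Q(\lam)$, and observe that a module supported on one vertex localizes to a skyscraper). The point you flag as the "main obstacle" is in fact not one: the isomorphism $\ZZ\cong\End_{\OA}(P_A(\lam))$ of \cite{5}, Theorem 3.6 is constructed by letting an endomorphism act on the Verma subquotients of a Verma flag (compare the map $\chi$ displayed in the paper before Proposition \ref{Fie5}), so after tensoring with $Q$ the idempotent $e_x$ is by construction the projection onto the $\Delta_Q(x\cdot\lam)$-isotypic part of $P_Q(\lam)$, and your computation of $e_x\VV\Delta_A(w\cdot\lam)$, $e_x\VV\nabla_A(w\cdot\lam)$ and of the edge stalks via the push-out goes through.

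The one step that is genuinely under-justified is the multiplicity-one claim. "$L_\K(\lam)$ occurs as the simple socle of $\Delta_\K(w\cdot\lam)$" does not by itself give $[\Delta_\K(w\cdot\lam):L_\K(\lam)]=1$: a composition factor can appear in the socle and again higher up, so this inference is invalid as stated, and it is precisely where the rank-one content of the proposition sits. The fact is true, and the cleanest source inside this paper is again the quoted isomorphism $\End_{\OA}(P_A(\lam))\cong\ZZ$: tensoring with $Q$ gives the commutative ring $\prod_{x\in\V}Q$, while $\End_{\OC_Q}(P_Q(\lam))\cong\prod_{x}M_{n_x}(Q)$ with $n_x=(P_A(\lam):\Delta_A(x\cdot\lam))$, so commutativity forces $n_x\leq 1$ and counting the $|\V|$ factors forces $n_x=1$; by BGG reciprocity (Theorem \ref{Fie2}) this also yields $[\Delta_\K(w\cdot\lam):L_\K(\lam)]=1$, which you then feed into your $\nabla$- and $\Delta$-side rank computations. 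Two small housekeeping points: Nakayama requires $\Hom_{\OA}(P_A(\lam),\Delta_A(w\cdot\lam))$ to be finitely generated, which you can get by embedding it into $\Hom_{\OA}(P_A(\lam),\nabla_A(w\cdot\lam))$ (free of finite rank by Corollary \ref{Soe2}) via the inclusion $\Delta_A(w\cdot\lam)\hookrightarrow\nabla_A(w\cdot\lam)$; and in part (2) the split-extension argument is fine since the quotient term in your short exact sequence is free, hence projective. With the multiplicity-one step repaired as above, your proof is correct and coincides with the intended one.
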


\begin{proof}
The proof is analogous to \cite{10} Proposition 4.13.
\end{proof}

\subsection{Deformed tilting modules and BMP-sheaves}\label{BMP}

Following \cite{10} section 4.14., we construct certain submodules and quotients of a module $M \in \OC_{A}$.
Let $D$ be a subset of $\h^*$ with the property:\\
If $\lam \in D$ and $\mu \in \h^*$ with $\mu \leq \lam$, then $\mu \in D$.\\
Set for $M \in \OC_A$
$$ O^D M:= \sum_{\mu \notin D} U(\g_A) M_\mu \, \, and \,\, M[D] := M/O^D M$$

\begin{prop} [\cite{10}, section 4.14.]´
	\begin{enumerate}
	\item If $M$ has a Verma flag, so do $O^D M$ and $M[D]$
	\item 
   \begin{displaymath}
		\begin{array}{ccc}
		
			O^D\Delta_A(\lam) \cong		 
					\left\{ \begin{array}{c}
        \Delta_A (\lam) \, \textrm{if}\, \lam \notin D\\
        0 \,\,\,\,\,\,\,\,\, \textrm{else}
  
    	\end{array}\right.
		&
		and
		&
			\Delta_A(\lam)[D] \cong
	
			\left \{\begin{array}{c}
        0 \,\,\,\,\,\,\,\, \textrm{if}\, \lam \notin D\\
      	\Delta_A(\lam) \,\,\,\textrm{else}
   	\end{array}\right.
		\end{array}
	\end{displaymath}
	\end{enumerate}
\end{prop}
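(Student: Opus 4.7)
The plan is to verify part (2) by inspection and then derive part (1) by induction on the length $n$ of a Verma flag of $M$, splitting on whether the top weight of the flag lies in $D$.

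For part (2), the deformed Verma $\Delta_A(\lam)$ is cyclically generated by a weight-$\lam$ vector $v_\lam$, and every other weight of $\Delta_A(\lam)$ is strictly below $\lam$. When $\lam\in D$, downward closure of $D$ puts every weight of $\Delta_A(\lam)$ into $D$, so no weight vector contributes to $O^D$, forcing $O^D\Delta_A(\lam)=0$ and $\Delta_A(\lam)[D]=\Delta_A(\lam)$. When $\lam\notin D$, the generator $v_\lam$ itself is a weight vector of weight $\notin D$, so $O^D\Delta_A(\lam)\supseteq U(\g_A)v_\lam=\Delta_A(\lam)$, whence $\Delta_A(\lam)[D]=0$.

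For part (1), fix a Verma flag $0=M_0\subset\cdots\subset M_n=M$ with $M_i/M_{i-1}\cong\Delta_A(\mu_i)$ and induct on $n$, using part (2) for the base case. Consider the short exact sequence $0\to M_{n-1}\to M\to\Delta_A(\mu_n)\to 0$. The easy case is $\mu_n\in D$: by downward closure every weight of $\Delta_A(\mu_n)$ lies in $D$, so $M_\mu=(M_{n-1})_\mu$ whenever $\mu\notin D$, and therefore $O^DM=O^DM_{n-1}$. The inductive hypothesis supplies a Verma flag on $O^DM$, and quotienting the above sequence by $O^DM_{n-1}$ yields
\[
0\to M_{n-1}[D]\to M[D]\to\Delta_A(\mu_n)\to 0,
\]
whose outer terms carry Verma flags.

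In the substantive case $\mu_n\notin D$, I would produce the short exact sequence
\[
0\to O^DM_{n-1}\to O^DM\to\Delta_A(\mu_n)\to 0.
\]
Choose a weight-$\mu_n$ lift $\tilde v_n\in M_{\mu_n}$ of the canonical generator $v_n$; since $\mu_n\notin D$, $\tilde v_n\in O^DM$. Any weight vector $m\in M_\mu$ with $\mu\notin D$ can be decomposed as $m=x\tilde v_n+(m-x\tilde v_n)$ with $x\in U(\mathfrak n^-)$ chosen so that $xv_n$ equals the image of $m$ in $\Delta_A(\mu_n)$; the second summand lies in $(M_{n-1})_\mu\subseteq O^DM_{n-1}$. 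This both shows $O^DM=U(\g_A)\tilde v_n+O^DM_{n-1}$ and gives the surjection $O^DM\twoheadrightarrow\Delta_A(\mu_n)$. The content is identifying the kernel with $O^DM_{n-1}$: an element $x\tilde v_n+y$ of the kernel has $y\in O^DM_{n-1}$ and $x\in\mathrm{Ann}(v_n)$, where this annihilator is the left $U(\g_A)$-ideal generated by $\mathfrak n^+$ together with $\{h-(\mu_n+\tau)(h)\}_{h\in\h}$; the Cartan part kills the weight vector $\tilde v_n$, while each $e_\alpha\tilde v_n$ lies in $M_{n-1}$ and carries weight $\mu_n+\alpha\notin D$, so $e_\alpha\tilde v_n\in O^DM_{n-1}$ and consequently $x\tilde v_n\in O^DM_{n-1}$. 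The resulting sequence furnishes a Verma flag on $O^DM$. Finally, since $O^DM$ surjects onto $\Delta_A(\mu_n)=M/M_{n-1}$ we have $O^DM+M_{n-1}=M$, so $M[D]\cong M_{n-1}[D]$ carries a Verma flag by induction.

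The main obstacle is identifying $O^DM\cap M_{n-1}$ with $O^DM_{n-1}$ in the case $\mu_n\notin D$; downward closedness of $D$ is used essentially here, via the observation that $\mu_n+\alpha>\mu_n$ together with $\mu_n\notin D$ forces $\mu_n+\alpha\notin D$ for every positive root $\alpha$.
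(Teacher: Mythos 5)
Your proof is correct. The paper itself gives no argument here (it simply cites [10], section 4.14), and your treatment is essentially the standard one found there: check part (2) directly from the fact that $\Delta_A(\lam)$ is generated by its highest weight vector and all its weights are $\leq\lam$, then induct on the length of the Verma flag, splitting on whether the top subquotient's highest weight $\mu_n$ lies in $D$; your key step — identifying $O^DM\cap M_{n-1}$ with $O^DM_{n-1}$ via the annihilator of $v_n$ and the observation that $\mu_n+\alpha\notin D$ for $\alpha$ a positive root (only a cosmetic remark: the element $x$ should be taken in $U(\mathfrak{n}^-)\otimes_\C A$, i.e.\ with $A$-coefficients) — is sound and uses the downward closedness of $D$ exactly where it is needed.
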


We now change notation: For the moment graph $\G=(\V, \E, \alpha , \leq)$ associated to the block $\OC_{A,\lam}$ we write $\uparrow$-open for an F-open subgraph. For a subgraph which is F-open according to the moment graph with the reversed order, we write $\downarrow$-open. A subgraph $\mathcal{H}$ with set of vertices $\V'$ is $\uparrow$-open if and only if $\mathcal{H}^c$ is $\downarrow$-open, where $\mathcal{H}^c$ is the full subgraph of $\G$ with vertices $\V^c:= \V - \V'$. For $\mathcal{H}$ an $\downarrow$-open sugraph, set $D$ equal to the set of all $\nu \in \h^*$, such that there exists $x \in \V^c$ with $\nu \leq x \cdot \lam$.\\
Set $O^{\mathcal{H}^c} M := O^D M$ and $M[\mathcal{H}^c] =M [D]$.

\begin{lemma} \label{A}
Let $M$ be a tilting module. There is a natural isomorphism
$$e_{\mathcal{H}} \VV M \cong \VV(d(O^{\mathcal{H}^c} M))$$
\end{lemma}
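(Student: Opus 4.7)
The plan is to exhibit both $e_{\mathcal{H}}\VV M$ and $\VV d(O^{\mathcal{H}^c}M)$ as the same quotient of $\VV M$. I would start from the tautological short exact sequence $0\to O^{\mathcal{H}^c}M\to M\to M[\mathcal{H}^c]\to 0$ in $\OC_{A,\lam}$. By the preceding proposition, $O^{\mathcal{H}^c}M$ and $M[\mathcal{H}^c]$ inherit Verma flags, whose Verma subquotients are the $\Delta_A(x\cdot\lam)$ with $x\in\V'$ (the vertex set of $\mathcal{H}$) and the $\Delta_A(y\cdot\lam)$ with $y\in\V^c$, respectively. Since $M$ is tilting, I would fix an isomorphism $M\cong dM$ and dualize to obtain
$$0\to d(M[\mathcal{H}^c])\to M\to d(O^{\mathcal{H}^c}M)\to 0,$$
which is exact because $d$ is exact on $A$-free modules.

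Next, $P_A(\lam)$ is projective, so $\VV$ is exact, and by Corollary \ref{Soe2} the modules in sight are free of finite rank over $A$. Applying $\VV$ yields
$$0\to \VV d(M[\mathcal{H}^c])\to \VV M\to \VV d(O^{\mathcal{H}^c}M)\to 0.$$
The task reduces to showing that $\VV d(M[\mathcal{H}^c])$ coincides with the kernel of the projection $\VV M\to e_{\mathcal{H}}\VV M$, namely with $\VV M\cap (1-e_{\mathcal{H}})(\VV M\otimes_{A}Q)$, where $Q$ is the fraction field of $A$.

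To locate things over $Q$ I would use that each integral equivalence class collapses to a singleton over $Q$, so $P_A(\lam)\otimes_A Q$ decomposes as a direct sum of generic Vermas indexed by $\V$ and the idempotent $e_w\in\ZZ\otimes_{A}Q$ acts on $\VV(-)\otimes_{A}Q$ as the projection onto the $w$-th summand. For any module with a Verma or nabla flag supported on $T\subseteq\V$, the module $\VV(-)\otimes_{A}Q$ is then concentrated in the components indexed by $T$. Applied to $d(M[\mathcal{H}^c])$ (nabla flag on $\V^c$) and $d(O^{\mathcal{H}^c}M)$ (nabla flag on $\V'$), combined with exactness after tensoring with $Q$, this forces $\VV d(M[\mathcal{H}^c])\otimes_{A}Q=(1-e_{\mathcal{H}})(\VV M\otimes_{A}Q)$. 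Hence $\VV d(M[\mathcal{H}^c])\subseteq\ker(\VV M\to e_{\mathcal{H}}\VV M)$ with torsion cokernel; but that cokernel embeds into the free $A$-module $\VV d(O^{\mathcal{H}^c}M)$ and must therefore vanish, yielding the asserted isomorphism $e_{\mathcal{H}}\VV M\cong \VV d(O^{\mathcal{H}^c}M)$.

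The main obstacle is the support computation over $Q$: one needs both the decomposition of $P_A(\lam)\otimes_{A}Q$ into generic Vermas and the identification of each $e_w$ with the corresponding projector on $\VV(-)\otimes_{A}Q$. Once those are in place, the step from \emph{agrees over $Q$} to \emph{agrees over $A$} is a short torsion-freeness argument, and naturality in $M$ holds up to the choice of $M\cong dM$.
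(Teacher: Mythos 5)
Your proposal is correct and follows essentially the same route as the paper: dualize the sequence $O^{\mathcal{H}^c}M\hookrightarrow M\twoheadrightarrow M[\mathcal{H}^c]$ using self-duality of $M$ and $A$-freeness, apply the exact functor $\VV$, and identify $\VV d(M[\mathcal{H}^c])$ with $(1-e_{\mathcal{H}})$-part via the generic decomposition over $Q$. The paper states the last step in one line (support of $d(M[\mathcal{H}^c])\otimes_A Q$ lies in $\V^c$), and your torsion-freeness argument is exactly the detail implicitly needed to pass from agreement over $Q$ to the isomorphism over $A$.
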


\begin{proof}
Dualising the short exact sequence
$$O^{\mathcal{H}^c} M \hookrightarrow M \twoheadrightarrow M[\mathcal{H}^c]$$

we get by self-duality of $M$ and the freeness of all modules over $A$ a short exact sequence

$$d(O^{\mathcal{H}^c} M) \twoheadleftarrow M \hookleftarrow d(M[\mathcal{H}^c])$$

Since $\VV$ is exact we get a short exact sequence

$$\VV(d(O^{\mathcal{H}^c} M)) \twoheadleftarrow \VV(M) \hookleftarrow \VV(d(M[\mathcal{H}^c]))$$

Since $d(M[\mathcal{H}^c])\otimes_A Q$ is the direct sum of Verma modules of the form $\Delta_Q (x \cdot \lam)\cong \nabla_Q (x \cdot \lam)$ with $x \notin \V'$ we get $e_{\mathcal{H}} \VV M \cong \VV(d(O^{\mathcal{H}^c} M))$. 
\end{proof}

\begin{definition}
A sheaf $\M$ on $\G$ is called $\downarrow$-flabby (resp. $\downarrow$-projective) if it is F-flabby (resp. F-projective) according to the moment graph $\G$ with reversed order.
\end{definition}

\begin{prop} \label{D}
Let $M$ be a tilting module. Then $\LL(\VV M)$ is $\downarrow$-flabby.
\end{prop}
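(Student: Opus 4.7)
The plan is to apply Proposition \ref{B} to the $\ZZ$-module $\VV M$, but with the reversed order on the moment graph $\G$. The first step is to observe that the GKM property depends only on the unordered graph and its labeling $\alpha$, not on the partial order, so it is automatic that $\G$ with the reversed order is still a GKM-graph. Consequently Proposition \ref{B} carries over verbatim: it still yields F-flabbiness, but now along $\downarrow$-open subgraphs rather than $\uparrow$-open ones. This is the framework into which $\LL(\VV M)$ must fit.

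Next I would verify the global hypotheses on $\VV M$ required by Proposition \ref{B}. Since $M$ is tilting, it admits a Verma flag, so by the previous proposition $\VV M$ is a free $A$-module of finite rank. In particular it is torsion free over $A$ and, since $A \hookrightarrow \ZZ$ diagonally, it is finitely generated over $\ZZ$ as well.

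The substantive step is the reflexivity hypothesis: for every $\downarrow$-open subgraph $\mathcal{H}$ of $\G$ (equivalently, $\mathcal{H}^c$ is $\uparrow$-open in the original order), I must show that $e_{\mathcal{H}}\VV M$ is a reflexive $A$-module. Here Lemma \ref{A} does the key work, giving the identification
$$e_{\mathcal{H}}\VV M \cong \VV(d(O^{\mathcal{H}^c} M)).$$
By the proposition on truncations, the module $O^{\mathcal{H}^c} M$ inherits a Verma flag from $M$. Applying the duality $d$ turns this Verma flag into a Nabla flag, so $d(O^{\mathcal{H}^c} M)$ admits a $\nabla_A$-flag. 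Invoking again the proposition that $\VV$ sends modules with a Verma or a Nabla flag to free $A$-modules of finite rank, we conclude that $e_{\mathcal{H}}\VV M$ is free over $A$, hence in particular reflexive.

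With all hypotheses of Proposition \ref{B} verified for the reversed-order graph, the conclusion follows directly: $\LL(\VV M)$ is F-flabby on $\G$ with respect to the reversed order, i.e., $\downarrow$-flabby. The only potentially delicate point is making sure that the passage to the reversed order is purely formal in Proposition \ref{B} (so that no hidden use of the $\uparrow$-direction enters), and that Lemma \ref{A} applies for all $\downarrow$-open $\mathcal{H}$; both are true, and everything else reduces to assembling results already in the paper.
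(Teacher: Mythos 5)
Your proof is correct and follows essentially the same route as the paper: the paper's own argument is just a terse version of yours, invoking Lemma \ref{A} to see that $e_{\mathcal{H}}\VV M$ is free (hence reflexive) for every $\downarrow$-open $\mathcal{H}$ and then applying Proposition \ref{B} for the reversed order. Your additional checks (order-independence of the GKM property, the Nabla-flag argument for freeness of $\VV(d(O^{\mathcal{H}^c}M))$) merely spell out what the paper leaves implicit.
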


\begin{proof}
For any $\downarrow$-open subgraph $\mathcal{H}$ we get that $e_{\mathcal{H}}M$ is a free $A$-module by lemma \ref{A}. Now proposition \ref{B} tells us that $\LL(\VV M)$ is $\downarrow$-flabby.
\end{proof}

\begin{notation}
For $x \in \V$ we denote by $\mathscr{B}^{\uparrow}(x)$ the BMP-sheaf $\mathscr{B}(x)$ for our moment graph with the original order. For the moment graph with reversed order we denote the BMP-sheaf at the vertex $x$ by $\mathscr{B}^{\downarrow}(x)$.
\end{notation}

In the following we want to show that the indecomposable deformed tilting module $K_A(x \cdot \lam)$ with highest weight $x\cdot \lam$ corresponds to $\mathscr{B}^{\downarrow}(x)\otimes_S A$ under $\LL \circ \VV$. For this we need some preparation.

\begin{lemma}(\cite{7}, Lemma 7.4.)\label{inj}
Let $M \in \OC$ admit a Verma flag, $\mu \in \h^*$ and $k \in \N$. If $g: (\Delta_\C(\mu))^k \rightarrow M$ is a morphism which induces an injective map $g_\mu : (\Delta_\C(\mu))_\mu ^k \rightarrow M_\mu$ on the $\mu$-weight spaces, then $g$ is injective.
\end{lemma}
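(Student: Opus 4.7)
The plan is induction on the length $n$ of a Verma flag of $M$. The base case $n=0$ is trivial: $M=0$ forces $k=0$, so $g$ is vacuously injective. For the inductive step, I would arrange the Verma flag so that its top subquotient is a Verma $\Delta_\C(\lambda)$ with $\lambda$ maximal among the weights appearing, giving a short exact sequence $0 \to N \to M \stackrel{\pi}{\to} \Delta_\C(\lambda) \to 0$ in which $N$ admits a Verma flag of length $n-1$.

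The key external input is the classical Verma embedding bound $\dim \Hom_\OC(\Delta_\C(\mu), \Delta_\C(\lambda)) \le 1$, equivalently $\dim \Delta_\C(\lambda)_\mu^{\lb^+} \le 1$. Writing $w_i := g(v_i)$ for the images in $M_\mu^{\lb^+}$ of the $k$ standard generators (linearly independent by hypothesis, and automatically highest weight vectors because $g$ is $\g$-linear), the images $\pi(w_1), \ldots, \pi(w_k)$ therefore span a subspace of dimension $0$ or $1$ inside $\Delta_\C(\lambda)_\mu^{\lb^+}$.

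If that dimension is $0$, all $w_i$ already lie in $N_\mu$, so $g$ factors through $N \hookrightarrow M$, the $w_i$ remain linearly independent in $N_\mu$, and the induction hypothesis gives injectivity directly. If the dimension is $1$, a basis change in the source $(\Delta_\C(\mu))^k$ produces a new basis of generators whose images $w_1, \ldots, w_k$ satisfy $\pi(w_1) \ne 0$ while $w_2, \ldots, w_k \in N$. Induction applied to the tail gives injectivity of $g': (\Delta_\C(\mu))^{k-1} \to N$ sending the generators to $w_2, \ldots, w_k$. Meanwhile $\pi \circ g_1 : \Delta_\C(\mu) \to \Delta_\C(\lambda)$, where $g_1$ sends the generator to $w_1$, is a nonzero morphism of Verma modules and hence injective (any nonzero map between Vermas is injective since Vermas are free over $U(\g^-)$).

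To finish: if $\sum u_i v_i \in \ker g$ in this second case, so $\sum u_i w_i = 0$ in $M$, applying $\pi$ gives $u_1 \pi(w_1) = 0$ in $\Delta_\C(\lambda)$; since $\pi(w_1)$ generates a Verma submodule isomorphic to $\Delta_\C(\mu)$, freeness over $U(\g^-)$ forces $u_1 = 0$. The remaining relation $\sum_{i \ge 2} u_i w_i = 0$ lives in $N$ and is in the image of $g'$, so injectivity of $g'$ gives $u_i = 0$ for $i \ge 2$. The main subtlety I anticipate is arranging the Verma flag so that a maximal-weight Verma sits on top and performing the basis change that cleanly isolates its contribution from the rest; after that, the Verma embedding bound plus $U(\g^-)$-freeness of Vermas do the work.
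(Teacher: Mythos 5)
The paper itself gives no argument for this lemma (it simply cites \cite{7}, Lemma 7.4), so I can only assess your proposal on its own terms. The one step that genuinely fails is the very first one: you cannot, in general, rearrange a Verma flag so that a subquotient $\Delta_\C(\lambda)$ with $\lambda$ \emph{maximal} among the occurring highest weights sits on top as a quotient. The standard rearrangement fact goes in the opposite direction: a maximal weight gives a Verma \emph{submodule}, i.e.\ flags can be ordered with the larger weights at the bottom. A concrete counterexample to your arrangement is an indecomposable projective $P_\C(\mu)$ in a subgeneric block, with its non-split flag $0\rightarrow \Delta_\C(\lambda)\rightarrow P_\C(\mu)\rightarrow \Delta_\C(\mu)\rightarrow 0$ and $\lambda>\mu$: every nonzero quotient of $P_\C(\mu)$ has head $L_\C(\mu)$, so $\Delta_\C(\lambda)$ can never appear as the top subquotient of any flag of $P_\C(\mu)$. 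So the step you yourself singled out as the anticipated subtlety is not merely delicate; as stated it is impossible.

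The good news is that maximality of $\lambda$ is never used in the remainder of your argument, so the gap is repairable by deletion rather than by new ideas. Take an arbitrary Verma flag $0=M_0\subset\cdots\subset M_n=M$ and simply set $N=M_{n-1}$ and $\Delta_\C(\lambda)=M/M_{n-1}$: the two inputs you actually invoke, namely $\dim_\C\Hom_{\OC}(\Delta_\C(\mu),\Delta_\C(\lambda))\leq 1$ (equivalently, the space of $\mathfrak{n}^+$-invariants of weight $\mu$ in $\Delta_\C(\lambda)$ is at most one-dimensional) and the injectivity of any nonzero morphism between Verma modules, are classical facts valid for arbitrary $\lambda,\mu$, with no maximality hypothesis. (For the latter, note that freeness over $U(\mathfrak{n}^-)$ alone is not quite the reason; one needs that $U(\mathfrak{n}^-)$ has no zero divisors, so that $x\mapsto xu$ is injective for $u\neq 0$.) With this modification your case distinction on $\dim\,\mathrm{span}\{\pi(w_i)\}\in\{0,1\}$, the scalar base change isolating $w_1$, the application of the induction hypothesis to $g'$ on $N$, and the final computation killing $u_1$ via injectivity of $\pi\circ g_1$ and then $u_2,\dots,u_k$ via $g'$ all go through verbatim, and the induction on the length of the flag is correct.
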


\begin{lemma} \label{C}
Let $K \in \OC_{A,\lam}$ be a tilting module. Then for any $w \in \V$ $\LL(\VV K)_w$ is free over $A$ of rank $r:= (K: \Delta_A (w\cdot \lam))$.
\end{lemma}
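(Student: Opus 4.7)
The plan is to use Lemma~\ref{A} to realise both $e_{\V_{\geq w}} \VV K$ and $e_{\V_{> w}} \VV K$ as $\VV$ applied to $d$ of explicit subquotients of $K$, and to isolate $e_w \VV K$ as the kernel of the natural surjection between them.

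Concretely, I take the two $\downarrow$-open subgraphs $\mathcal{H} := \V_{\geq w}$ and $\mathcal{H}' := \V_{> w}$, and let $D \subset D'$ be the downward closed subsets of $\h^*$ attached to $\mathcal{H}^c \subset \mathcal{H}'^c$ as in the construction preceding Lemma~\ref{A}. Because $\V_{\not\geq w} \subset \V_{\not> w}$, one has $D \subset D'$ and hence $O^{D'} K \subset O^D K$. The first step is a small combinatorial check that, inside the orbit $\W_\lam \cdot \lam$, the difference $D' \setminus D$ is exactly $\{w\cdot \lam\}$: this boils down to $\{x \in \V : x \not> w\} \setminus \{x \in \V : x \not\geq w\} = \{w\}$, and I also have to verify that vertices incomparable with $w$ lie in $D \cap D'$ (so they do not contribute to the difference). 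Granted this, the proposition recalled at the start of Subsection~\ref{BMP} shows that $N := O^D K / O^{D'} K$ inherits a Verma flag consisting of exactly $r = (K:\Delta_A(w\cdot \lam))$ copies of $\Delta_A(w\cdot \lam)$.

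Next I dualise the short exact sequence $0 \to O^{D'} K \to O^D K \to N \to 0$ (all three terms are $A$-free thanks to their Verma flags, so $d$ is exact on them) and apply the exact functor $\VV$. Lemma~\ref{A} identifies the result with
$$0 \longrightarrow \VV(dN) \longrightarrow e_{\V_{\geq w}} \VV K \longrightarrow e_{\V_{> w}} \VV K \longrightarrow 0.$$
Tracing the isomorphism of Lemma~\ref{A} through the surjections $\VV K \twoheadrightarrow \VV d(O^D K)$ and $\VV K \twoheadrightarrow \VV d(O^{D'} K)$, and using $e_{\V_{>w}} = e_{\V_{>w}} e_{\V_{\geq w}}$, the right-hand map is just $z \mapsto e_{\V_{>w}} z$. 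A short elementwise argument (writing any $z \in e_{\V_{\geq w}} \VV K$ as $e_{\V_{\geq w}} \zeta = e_w \zeta + e_{\V_{> w}} \zeta$ with $\zeta \in \VV K$) then shows that the kernel of this map equals $e_w \VV K$ on the nose, giving $\VV(dN) \cong e_w \VV K$ as $A$-modules.

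It remains to show $\VV(dN) \cong A^r$. The module $dN$ carries a nabla flag of $r$ copies of $\nabla_A(w\cdot \lam)$. Using $\LL(\VV \nabla_A(w\cdot\lam)) \cong \mathscr{V}_A(w)$ together with Proposition~\ref{Soe1} applied to the Verma flag of $P_A(\lam)$ (forcing $(P_A(\lam):\Delta_A(w\cdot\lam)) = 1$ from the rank of the skyscraper), one gets $\VV \nabla_A(w\cdot \lam) \cong A$ as an $A$-module. Exactness of $\VV$ then yields a filtration of $\VV(dN)$ by free modules of rank one over the local ring $A$, and since extensions of free modules over a local ring split, $\VV(dN)$ is free of rank $r$. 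Combined with the previous paragraph, $e_w \VV K = \LL(\VV K)_w$ is free over $A$ of rank $r$, as required. The main obstacle will be the second paragraph: correctly matching the map in the $\VV$-image with idempotent multiplication by $e_{\V_{>w}}$, so that the kernel lands exactly on $e_w \VV K$ rather than on some nearby $A$-lattice inside $e_w(\VV K \otimes_A Q)$.
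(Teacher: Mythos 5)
Your reduction to the kernel of $e_{\V_{\geq w}}\VV K \to e_{\V_{>w}}\VV K$ is exactly where the argument breaks, at the point you yourself flagged. Under the identifications of Lemma \ref{A} that map is indeed multiplication by $e_{\V_{>w}}$, but its kernel is $e_{\V_{\geq w}}\VV K\cap e_w(\VV K\otimes_A Q)=\{e_w\zeta \mid \zeta\in\VV K,\ e_{\V_{>w}}\zeta=0\}$, i.e.\ the sections of $\VV K$ supported at $w$ (a costalk-type lattice), not the image $e_w\VV K=\LL(\VV K)_w$. Your elementwise argument only gives the inclusion of the kernel into $e_w\VV K$; the reverse inclusion would require $e_w\VV K\subset e_{\V_{\geq w}}\VV K$, which fails because $e_w$ does not lie in $\ZZ$. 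This already goes wrong for $\g=\mathfrak{sl}_2$ (or after localizing at a subgeneric prime): for the block with two vertices $x>y$ joined by an edge labelled $\check{\alpha}$ and $K$ the indecomposable tilting module of highest weight $x\cdot\lam$, one has $\VV K\cong\ZZ=\{(t_x,t_y)\in A\oplus A \mid t_x\equiv t_y \ \mathrm{mod}\ \check{\alpha}\}$ (Proposition \ref{Fie5}); taking $w=y$, the kernel of multiplication by $e_x$ on $\ZZ$ is $\{(0,t)\mid t\in\check{\alpha}A\}$, a proper sublattice of $e_y\VV K=\{(0,t)\mid t\in A\}$. So your short exact sequence computes $\VV(dN)$, which is indeed free of rank $r$ (that part is fine), but it is not the stalk the lemma is about.

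This matters for the conclusion: knowing that a full-rank sublattice of $e_w\VV K$ is free of rank $r$ does not force $e_w\VV K$ to be free, since $A$ is a regular local ring of dimension $\geq 2$ and finitely generated torsion-free modules of the right rank need not be free there (think of the maximal ideal in dimension two). The paper pins down the correct lattice differently: it shows $\Hom_{\OC_A}(\Delta_A(w\cdot\lam),K)$ is free of rank $r$ via Corollary \ref{Soe2} and a dimension count over $\K$, chooses a basis $f_1,\dots,f_r$, dualizes to $df\colon K\to\nabla_A(w\cdot\lam)^r$, and proves that $\VV df$ factors through a map $(\VV df)^w\colon e_w\VV K\to\VV(\nabla_A(w\cdot\lam)^r)$ which is injective because it is generically an isomorphism and surjective by Nakayama combined with Lemma \ref{inj} applied over $\C$. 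That surjectivity step is precisely what replaces your unjustified lattice identification; if you want to salvage your filtration strategy you would have to work with the quotient constructions $M[D]$ (images) rather than the submodules $O^DM$ (kernels), and you would then run into the same surjectivity question, which is where the Nakayama/Lemma \ref{inj} argument comes in.
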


\begin{proof}
We want to construct an isomorphism $e_w (\VV K) = \LL(\VV K)_w \xrightarrow{\sim} \VV(\nabla_A (w\cdot \lam)^r)$. 
Using corollary \ref{Soe2} and the fact that over the residue field $\K$ we have $\mathrm{dim}_\K \Hom_{\OC_\K}(\Delta_\K(w \cdot \lam) , K \otimes_A \K) = \mathrm{dim}_\K \Hom_{\OC_\K}(K \otimes_A \K, \nabla_\K(w\cdot \lam)) = (K\otimes_A \K : \Delta_\K(w\cdot \lam))$, we can deduce that $\Hom_{\OC_A}(\Delta_A(w\cdot \lam),K)$ is a free $A$- module of rank $r$. Now choose an $A$-basis $f_1,...,f_r$ of $\Hom_{\OC_A}(\Delta_A(w\cdot \lam),K)$. Dualising yields a basis $df_1,..., df_r$ of $\Hom_{\OC_A}(K,\nabla_A(w\cdot \lam))$. Consider now the map
\begin{displaymath}
		\begin{array}{cc}		
		f: \Delta_A(w\cdot \lam)^r \longrightarrow K
		&
		(v_1,...,v_r) \mapsto \sum f_i(v_i)
		\end{array}
\end{displaymath}

Dualising this map yields by self-duality of $K$	

\begin{displaymath}
		\begin{array}{cc}		
		df: K \longrightarrow \nabla_A(w\cdot \lam)^r 
		&
		v \mapsto \sum (df_i(v))
		\end{array}
\end{displaymath}

Applying the functor $\VV$ gives a map
$$\VV df :\VV K \longrightarrow \VV \nabla_A(w\cdot \lam)^r$$
which factors over $e_{w}(\VV K)$, since $\VV \nabla_A (w \cdot \lam)^r$ has support $\{w\}$. So this gives a well-defined map
\begin{displaymath}
		\begin{array}{cc}		
		(\VV df)^w: e_w \VV K \longrightarrow \VV\nabla_A(w\cdot \lam)^r
		
	\end{array}
\end{displaymath}
which is injective, since it becomes an isomorphism after applying $\cdot \otimes_A Q$. We now want to prove that $(\VV df)^w$ is surjective. For this, by Nakayama's lemma and by exactness of $\VV$, it is enough to prove that 
\begin{displaymath}
		\begin{array}{cc}		
		 df\otimes_A \mathrm{id_\C}: K \otimes_A \C \longrightarrow \nabla_A(w\cdot \lam)^r \otimes_A \C
		
	\end{array}
\end{displaymath}
is surjective. But since $f: \Delta_\C (w \cdot \lam)^r \rightarrow K\otimes_A \C$ is injective, by lemma \ref{inj} we get the surjectivity of $df: K\otimes_A \C \rightarrow \nabla_\C(w \cdot \lam)^r$. Since $\VV \nabla_A (w\cdot \lam)^r$ is free of rank $r$ over $A$ we get the result.
\end{proof}

\begin{thm}
Let $K \in \OC_{A,\lam}$ be a tilting module. Then $\LL(\VV K)$ is $\downarrow$-projective as an $A$-sheaf on $\G$.
\end{thm}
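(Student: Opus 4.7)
The plan is to verify the four defining conditions of an F-projective sheaf with respect to the reversed order on $\G$. The $\downarrow$-flabbiness is Proposition \ref{D} and the freeness of each stalk $\LL(\VV K)_w$ over $A$ is Lemma \ref{C}. For generation by global sections, apply the reversed-order version of Proposition \ref{B} with $\mathcal{H} = \G$; its hypothesis that $e_{\mathcal{H}} \VV K$ is reflexive (in fact free) over $A$ for every $\downarrow$-open $\mathcal{H}$ follows from Lemma \ref{A} together with Corollary \ref{Soe2}. This yields $\Gamma(\LL(\VV K)) \cong e_\G \VV K = \VV K$, and since $\LL(\VV K)_w = e_w \VV K$ is by construction the image of $\VV K$ under the idempotent $e_w$, the global sections surject onto every stalk.

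The remaining and most delicate step is the edge isomorphism condition. For an edge $E = \{x, y\}$ with $y < x$ in the original order (an ``up'' edge in the reversed order), one must show that $\rho_{x, E}$ induces an isomorphism $\LL(\VV K)_x / \alpha_E \LL(\VV K)_x \xrightarrow{\sim} \LL(\VV K)_E$. If $\alpha_E = \check{\alpha}$ is invertible in $A$ both sides vanish; otherwise the plan is to base change along the flat ring map $A \to A \otimes_R R_\alpha$. The functor $\VV$ commutes with base change by Theorem \ref{Fie2}(3) and Corollary \ref{Soe2}, and the explicit push-out description of $\LL$ shows that it commutes with flat base change, so checking the edge isomorphism after this base change is sufficient.

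After base change, Lemma \ref{Fie4} splits the block $\W_\lam \cdot \lam$ into subgeneric two-element orbits, and via the block decomposition (Proposition \ref{Fie3}) the base-changed tilting module decomposes accordingly into tilting modules in the subgeneric blocks. Only the summand supported on the orbit $\{y \cdot \lam, x \cdot \lam\}$ contributes at $x$, $y$, and $E$, and in this subgeneric setting the indecomposable tilting module fits into $0 \to \Delta_A(y \cdot \lam) \to K_A(x \cdot \lam) \to \Delta_A(x \cdot \lam) \to 0$. Both relevant stalks are of rank one over $A$ by Lemma \ref{C}, and $\End(P_A(y \cdot \lam))$ is described explicitly by Proposition \ref{Fie5}. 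The edge isomorphism then reduces to a direct inspection: the defining congruence $t_\lam \equiv t_\mu \bmod \check{\alpha}$ from Proposition \ref{Fie5} precisely encodes the identification of $\LL(\VV K)_x / \check{\alpha} \LL(\VV K)_x$ with the push-out defining $\LL(\VV K)_E$.
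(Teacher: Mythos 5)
Your overall strategy coincides with the paper's: the easy conditions via Proposition \ref{D}, Lemma \ref{C} and generation by global sections, and the edge condition by localizing at $\check{\alpha}$, splitting into subgeneric blocks (Lemma \ref{Fie4}) and invoking the explicit description of $\End(P_{A_\p}(y\cdot\lam))$ in Proposition \ref{Fie5}. The genuine gap is in your reduction step: you justify passing to $A\otimes_R R_\alpha\cong A_{\p}$ with $\p=A\check{\alpha}$ only by saying that $\VV$ and $\LL$ commute with flat base change. That gives the wrong direction of implication: flat base change \emph{preserves} isomorphisms, it does not \emph{reflect} them (for instance, every such map becomes an isomorphism after $\otimes_A Q$, which proves nothing). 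Since $\check{\alpha}\,\LL(\VV K)_x\subseteq\ker\rho_{x,E}$ is automatic, what must be shown is $\ker\rho_{x,E}\subseteq\check{\alpha}\,\LL(\VV K)_x$, and to deduce this from the localized statement you need a faithfulness input. The paper supplies it by checking the containment $\ker\rho_{x,E}\subseteq\check{\alpha}\bigl(\LL(\VV K)_x\otimes_A A_\p\bigr)$ for \emph{all} height-one primes $\p$ (trivial unless $\p=A\check{\alpha}$) and using that $\LL(\VV K)_x$ is free, hence reflexive, over $A$ by Lemma \ref{C}; equivalently one can argue that the kernel of the induced surjection $\LL(\VV K)_x/\check{\alpha}\LL(\VV K)_x\to\LL(\VV K)_E$ is a submodule of a free $A/\check{\alpha}A$-module, hence torsion free over the domain $A/\check{\alpha}A$, and its vanishing after localizing at $A\check{\alpha}$ forces it to vanish. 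Some such argument is essential and is missing from your write-up; note that it is a second, independent use of the freeness in Lemma \ref{C}, not a formal consequence of flatness.

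Two smaller points. In the subgeneric block with $y<x$ the indecomposable tilting is $K_{A_\p}(x\cdot\lam)\cong P_{A_\p}(y\cdot\lam)$ and sits in $0\to\Delta_{A_\p}(x\cdot\lam)\to K_{A_\p}(x\cdot\lam)\to\Delta_{A_\p}(y\cdot\lam)\to 0$; your sequence has submodule and quotient interchanged. And your final ``direct inspection'' is where the actual work happens: the paper identifies $(e_x+e_y)(\VV K\otimes_A A_\p)$ with a direct sum of copies of $\Hom(P_{A_\p}(y\cdot\lam),\Delta_{A_\p}(y\cdot\lam))\cong A_\p$ and $\End(P_{A_\p}(y\cdot\lam))\cong A_\p(e_x+e_y)+A_\p\check{\alpha}e_x$ and verifies elementwise that $e_yu=0$ forces $e_xu\in\check{\alpha}e_x(\VV K\otimes_A A_\p)$; your one-line appeal to the congruence of Proposition \ref{Fie5} is the right idea but should be carried out at this level of detail.
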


\begin{proof}
By \cite{10} section 2.12.(A) $\LL(\VV K)$ is generated by global sections and $\downarrow$-flabby by proposition \ref{D}. Furthermore, the lemma above shows that $(\LL (\VV K))_w$ is free over $A$ for every $w \in \V$.\\
So we only have to prove for any edge $E:=\{x,y\}$ $x> y$ and $\alpha(E)= \check{\alpha}$ where $x,y \in \mathcal{W}_\lam / \mathcal{W}'$ and $\check{\alpha}$ the coroot of a root $\alpha \in \mathcal{R}_\lam$, that  the map $\rho_{x,E} : \LL (\VV K)_x \rightarrow \LL (\VV K)_E$ induces an isomorphism 
$$\LL (\VV K)_x / \check{\alpha} \LL (\VV K)_x \stackrel{\sim}\longrightarrow \LL (\VV K)_E$$

Since $\check{\alpha} \LL (\VV K)_x \subset \mathrm{ker} \rho_{x,E}$, we have to show $\mathrm{ker} \rho_{x,E} \subset \check{\alpha} \LL (\VV K)_x$. For this it suffices to show that
\begin{equation}\label{E}
\mathrm{ker} \rho_{x,E} \subset \check{\alpha} (\LL (\VV K)_x \otimes_A A_\p) = \check{\alpha} \cdot e_x(\LL (\VV K) \otimes_A A_\p)
\end{equation}
for every prime ideal $\p \subset A$ of hight 1. For $\check{\alpha} \notin \p$ (\ref{E}) follows since $\check{\alpha}$ is invertible in $A_\p$.\\
So we have to prove (\ref{E}) for $\p=A \check{\alpha}$.

Since $\rho_{x,E}$ is a push-out map, we can identify $\mathrm{ker} \rho_{x,E}$ with the set $\{e_x u | u \in \VV K(E), \, e_yu =0\}$. Since an element $u \in \VV K(E)$ is of the form $u = (e_x +e_y) v + \check{\alpha} e_x w$ for $v,w \in \VV K$, we have to prove
$$\{e_x u| u \in (e_x+e_y) \VV K, \, e_y u =0\} \subset \check{\alpha} e_x (\VV K \otimes_A A_\p)$$
Since $x > y$, we get $K_{A_\p}(y \cdot \lam) \cong \Delta_{A_\p}(y \cdot \lam)$ and $K_{A_\p}(x \cdot \lam)\cong P_{A_\p}( y \cdot \lam)$. Now we can identify $(e_x +e_y) (\VV K \otimes_A A_\p)$ with a direct sum of $A_\p$-modules of the form $M:= \Hom_{\g_{A_\p}}(P_{A_\p}(y\cdot \lam)  ,  \Delta_{A_\p}(y \cdot \lam)) \cong A_\p$ and $N:= \Hom_{\g_{A_\p}}(P_{A_\p}(y\cdot \lam)  , K_{A_\p}(x \cdot\lam)) \cong A_\p (e_x+e_y) + A_\p \check{\alpha} e_x$ by proposition \ref{Fie5}.\\
If $f \in M$, we get $e_x f =0$ which is equal to $\check{\alpha} e_x f$. If $f \in N$, $e_y f =0$ implies $e_xf \in \check{\alpha} A_\p e_x$. But with our identification we get $e_x f \in \check{\alpha} e_x (\VV K \otimes_A A_\p)$.
\end{proof}

\begin{cor}
We have $\LL (\VV K_A (w \cdot \lam)) \cong \mathscr{B}^\downarrow(w) \otimes_S A$ for all $w \in \mathcal{W}_\lam$.
\end{cor}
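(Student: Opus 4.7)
The plan is to combine the preceding theorem with the classification of F-projective sheaves on $\G$ equipped with the reversed order, and then to cut down the resulting decomposition by a mix of support, rank, and indecomposability data. By the preceding theorem $\LL(\VV K_A(w\cdot\lam))$ is $\downarrow$-projective and of finite type, so the classification theorem for F-projective sheaves (applied to $\G$ with reversed order) gives
$$\LL(\VV K_A(w\cdot\lam)) \cong \bigoplus_{i=1}^n \mathscr{B}^\downarrow(z_i) \otimes_S A$$
for finitely many vertices $z_1,\dots, z_n \in \V$.

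Next I would read off the multiplicity of $\mathscr{B}^\downarrow(w)$ by comparing stalks. Lemma \ref{C} identifies $\LL(\VV K_A(w\cdot\lam))_y$ with a free $A$-module of rank $(K_A(w\cdot\lam) : \Delta_A(y\cdot\lam))$, which vanishes unless $y \leq w$ and equals $1$ at $y=w$. On the other hand $\mathscr{B}^\downarrow(z)$ is supported on $\{y : y \leq z\}$ with stalk $S$ at $z$ itself, so vanishing of the total stalk outside $\{y \leq w\}$ forces $z_i \leq w$ for every $i$. Specialising at $y=w$, only a summand with $z_i \geq w$ can contribute, and combined with $z_i \leq w$ this forces $z_i = w$; the rank-one condition then singles out exactly one copy of $\mathscr{B}^\downarrow(w) \otimes_S A$ as a direct summand.

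The main obstacle is to rule out additional summands $\mathscr{B}^\downarrow(z) \otimes_S A$ with $z<w$; for this I would appeal to the indecomposability of $K_A(w\cdot\lam)$. A nontrivial decomposition of $\LL(\VV K_A(w\cdot\lam))$ yields, via $\Gamma$ and the equivalence of Lemma \ref{adj}, a nontrivial decomposition of the $\ZZ$-module $\VV K_A(w\cdot\lam) = \Hom_{\OC_A}(P_A(\lam), K_A(w\cdot\lam))$. Since $P_A(\lam)$ is the antidominant projective of the block and, by Corollary \ref{Soe2} combined with base change to the residue field $\K$ via Proposition \ref{tilt1}, $\VV$ is sufficiently faithful on tilting modules, any such splitting lifts to a nontrivial idempotent in the local ring $\End_{\OC_A}(K_A(w\cdot\lam))$, which is impossible. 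Hence $\LL(\VV K_A(w\cdot\lam))$ is indecomposable and must coincide with $\mathscr{B}^\downarrow(w) \otimes_S A$.
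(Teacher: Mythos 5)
Your argument is correct and follows essentially the paper's own route: $\downarrow$-projectivity of $\LL(\VV K_A(w\cdot\lam))$ from the preceding theorem, the classification of F-projective sheaves (for the reversed order) into BMP-summands, identification of the vertex $w$ via the stalk ranks of Lemma \ref{C}, and indecomposability of $K_A(w\cdot\lam)$ to exclude further summands. The only imprecision is your justification of the idempotent-lifting step: what is actually needed is that $\End_{\OC_A}(K_A(w\cdot\lam))\rightarrow\End_{\ZZ}(\VV K_A(w\cdot\lam))$ is surjective, which comes from Fiebig's equivalence $\VV:\mathcal{M}_{A,\lam}\rightarrow\mathcal{C}_A(\G)$ (\cite{7}, Theorem 7.1) quoted later in the paper, rather than from Corollary \ref{Soe2} together with Proposition \ref{tilt1}.
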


\begin{proof}
The proof is essentially the same as in \cite{10} Theorem 4.22. and relies on the facts that $\LL (\VV K_A (w \cdot \lam))$ is $\downarrow$-projective and indecomposable. Now the description of the indecomposable $\downarrow$-projective sheaves by BMP-sheaves gives the claim.
\end{proof}

\begin{cor}
We have $(K(w \cdot \lam):\Delta(x \cdot \lam))= \mathrm{rk}_S \mathscr{B}^\downarrow(w)_x$.
\end{cor}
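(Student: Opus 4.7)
The plan is to read off the multiplicity directly from the rank of the stalk, using the two previous results as a black box. Concretely, by the preceding corollary there is an isomorphism
$$\LL(\VV K_A(w\cdot\lam))\cong \mathscr{B}^\downarrow(w)\otimes_S A$$
of $A$-sheaves on $\G$. Taking the stalk at $x\in\V$ gives an isomorphism of $A$-modules
$$\LL(\VV K_A(w\cdot\lam))_x\cong \mathscr{B}^\downarrow(w)_x\otimes_S A.$$
Both sides are free over $A$: the right-hand side because $\mathscr{B}^\downarrow(w)_x$ is graded free over $S$ by the definition of F-projectivity (applied to the reversed-order moment graph), and the left-hand side by Lemma \ref{C}. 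Comparing ranks yields the equality, provided I can identify each rank with the appropriate invariant.

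First I would identify the rank of the right-hand stalk. Since $\mathscr{B}^\downarrow(w)_x$ is a graded free $S$-module, tensoring with the local $S$-algebra $A$ preserves the rank, so
$$\mathrm{rk}_A\bigl(\mathscr{B}^\downarrow(w)_x\otimes_S A\bigr)=\mathrm{rk}_S \mathscr{B}^\downarrow(w)_x.$$
Second, I would invoke Lemma \ref{C} which says that for a tilting module $K$ the stalk $\LL(\VV K)_w$ is free over $A$ of rank equal to the Verma multiplicity $(K:\Delta_A(w\cdot\lam))$; applied here (with the roles of $w$ and $x$ interchanged to match the statement) this gives
$$\mathrm{rk}_A\LL(\VV K_A(w\cdot\lam))_x=(K_A(w\cdot\lam):\Delta_A(x\cdot\lam)).$$

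Finally, I would remark that Verma multiplicities are invariant under base change of the local deformation algebra, so the multiplicity $(K_A(w\cdot\lam):\Delta_A(x\cdot\lam))$ equals the non-deformed multiplicity $(K(w\cdot\lam):\Delta(x\cdot\lam))$ which appears in the statement; this follows from Theorem \ref{tilt1} (deformed tilting modules are classified by their highest weight and commute with base change) together with the base-change compatibility of Verma flags used throughout Section 3. Combining the three displays gives the claim. I do not expect a serious obstacle: the whole content is the previous corollary together with Lemma \ref{C}, and the only point that requires care is the bookkeeping identifying rank over $S$, rank over $A$, and multiplicity in $\OC$, which is straightforward once one notes that $\mathscr{B}^\downarrow(w)_x$ is free by the very definition of an F-projective sheaf on the reversed moment graph.
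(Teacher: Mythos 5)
Your proposal is correct and follows the paper's own argument: the paper likewise combines Lemma \ref{C} (rank of the stalk equals the deformed Verma multiplicity, which equals the non-deformed one) with the preceding corollary identifying $\LL(\VV K_A(w\cdot\lam))$ with $\mathscr{B}^\downarrow(w)\otimes_S A$. Your extra bookkeeping about freeness and base change only makes explicit what the paper leaves implicit.
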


\begin{proof}
Lemma \ref{C} shows that $\textrm{rk}_A (\LL (\VV K_A (w \cdot \lam)_x) = (K_A(w \cdot \lam):\Delta_A(x \cdot \lam))= (K(w \cdot \lam):\Delta(x \cdot \lam))$. Now apply the above corollary.
\end{proof}

Let $w_{\circ} \in \mathcal{W}_\lam$ be the longest element according to the Bruhat order. Then the multiplication from the left
$$w_{\circ}: \G \longrightarrow \G^{\circ}, \,\,\, x\mathcal{W}' \mapsto w_{\circ} x \mathcal{W}'$$
is an isomorphism of moment graphs in the sense of \cite{12}. Thus, it induces a pull-back functor (\cite{12} definition 3.3.)

$$w_{\circ}^*: \mathcal{SH}_A(\G^{\circ})^f \longrightarrow \mathcal{SH}_A(\G)^f$$

\begin{prop}
$w_{\circ} ^*(\mathscr{B}^\downarrow (x)) = \mathscr{B}^\uparrow (w_{\circ}x)$
\end{prop}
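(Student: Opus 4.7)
The plan is to use the uniqueness characterization of Braden--MacPherson sheaves: by the theorem in Section 2.5, $\mathscr{B}^\uparrow(w_\circ x)$ is characterized up to isomorphism as the graded $S$-sheaf on $\G$ that is $\uparrow$-projective, indecomposable, and satisfies $\mathscr{B}^\uparrow(w_\circ x)_{w_\circ x} \cong S$ together with $\mathscr{B}^\uparrow(w_\circ x)_z = 0$ unless $w_\circ x \leq z$. It therefore suffices to verify these three properties for $w_\circ^*(\mathscr{B}^\downarrow(x))$.

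Since $\G^\circ$ carries the opposite order of $\G$, the hypothesis that $w_\circ : \G \to \G^\circ$ is an isomorphism of ordered moment graphs means precisely that multiplication by $w_\circ$ reverses the order on the vertex set $\V$, so that the $\uparrow$-open subgraphs of $\G$ correspond under $w_\circ$ to the F-open subgraphs of $\G^\circ$. Appealing to the formal properties of the pullback functor from \cite{12}, $w_\circ^*$ is an equivalence of categories and it sends F-projective sheaves on $\G^\circ$ to $\uparrow$-projective sheaves on $\G$. Since $\mathscr{B}^\downarrow(x)$ is by construction the indecomposable F-projective sheaf on $\G^\circ$ at the vertex $x$, its pullback $w_\circ^*(\mathscr{B}^\downarrow(x))$ is automatically $\uparrow$-projective and indecomposable on $\G$.

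The stalk and support conditions follow by direct calculation. The pullback satisfies $(w_\circ^*\M)_z = \M_{w_\circ z}$, so in particular $(w_\circ^*\mathscr{B}^\downarrow(x))_{w_\circ x} = \mathscr{B}^\downarrow(x)_{x} \cong S$; more generally the stalk $(w_\circ^*\mathscr{B}^\downarrow(x))_z = \mathscr{B}^\downarrow(x)_{w_\circ z}$ vanishes unless $w_\circ z$ lies above $x$ in the order of $\G^\circ$, equivalently unless $w_\circ z \leq x$ in the order of $\G$. By the order-reversing property of $w_\circ$, this last condition is equivalent to $w_\circ x \leq z$, which is exactly the support condition characterizing $\mathscr{B}^\uparrow(w_\circ x)$. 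Uniqueness of the BMP sheaf then yields the claimed isomorphism.

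The main obstacle I anticipate lies in the second step, namely the assertion that pullback along the isomorphism $w_\circ$ properly intertwines the order-theoretic notions (F-open, F-flabby, F-projective) with the reversal of order, and that the labels of edges transport compatibly with the structure sheaf. Both points are built into the notion of isomorphism of moment graphs and the definition of the pullback functor in \cite{12}, so once those references are invoked the remaining verification is formal.
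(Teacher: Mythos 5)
Your proposal is correct and takes essentially the same route as the paper: the paper disposes of this proposition by citing \cite{12}, Lemma 5.1, and your verification --- pulling back along the moment-graph isomorphism $w_\circ$, checking $\uparrow$-projectivity and indecomposability, computing the stalk $\cong S$ at $w_\circ x$ and the support condition via the order-reversing property, then invoking the uniqueness characterization of the BMP sheaf --- is precisely the content of that cited lemma. The points you defer to \cite{12} (that $z \mapsto w_\circ z$ is an isomorphism of ordered moment graphs in Lanini's sense, with edge labels transported by $-w_\circ$ rather than preserved on the nose, and that the pullback along it is an equivalence preserving F-projectivity) are exactly what the paper itself takes from that reference, so your argument is no less complete than the paper's.
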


\begin{proof}
\cite{12} Lemma 5.1.
\end{proof}

\begin{cor}
$(K(w \cdot \lam):\Delta(x \cdot \lam))= P_{x,w}(1)$ where $P_{x,w}$ denotes the Kazhdan-Lusztig polynomial.
\end{cor}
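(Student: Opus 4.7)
My plan is to chain together the two preceding results with Fiebig's realisation of Kazhdan--Lusztig polynomials via stalks of Braden--MacPherson sheaves on $\G$ with the Bruhat order. From the preceding corollary, $(K(w\cdot\lam):\Delta(x\cdot\lam)) = \mathrm{rk}_S \mathscr{B}^\downarrow(w)_x$. Because $w_\circ : \G \to \G^\circ$ is an isomorphism of moment graphs with $w_\circ^{-1}=w_\circ$, the associated pullback $w_\circ^*$ identifies the stalk of a sheaf at $y \in \G$ with the stalk of the original sheaf at $w_\circ y \in \G^\circ$; combined with the preceding proposition $w_\circ^*(\mathscr{B}^\downarrow(w)) = \mathscr{B}^\uparrow(w_\circ w)$, this yields the canonical identification $\mathscr{B}^\downarrow(w)_x \cong \mathscr{B}^\uparrow(w_\circ w)_{w_\circ x}$ of graded $S$-modules, and in particular the $S$-ranks agree.

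Next, Fiebig's realisation of the indecomposable deformed projective $P_A(y \cdot \lam)$ as the Braden--MacPherson sheaf $\mathscr{B}^\uparrow(y) \otimes_S A$, together with BGG reciprocity and the Kazhdan--Lusztig theorem for a regular integral block of $\OC$ with antidominant parameter, yields
\[
\mathrm{rk}_S \mathscr{B}^\uparrow(y)_v \;=\; (P(y\cdot\lam):\Delta(v\cdot\lam)) \;=\; [\Delta(v\cdot\lam):L(y\cdot\lam)] \;=\; P_{y,v}(1)
\]
for all $y \leq v$. Taking $y=w_\circ w$ and $v=w_\circ x$ (the hypothesis $y \leq v$ is equivalent to $x \leq w$, which is exactly the support condition for the tilting multiplicity), we conclude
\[
(K(w\cdot\lam):\Delta(x\cdot\lam)) \;=\; P_{w_\circ w,\, w_\circ x}(1).
\]

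The final step, and the place where the main obstacle sits, is a classical Kazhdan--Lusztig identity
\[
P_{w_\circ w,\, w_\circ x}(q) \;=\; P_{x,w}(q)
\]
for finite Weyl groups. The previous two steps are essentially a bookkeeping that assembles the moment-graph machinery already developed in the paper; the closing KL-identity, which expresses a Poincar\'e-type self-duality of KL-polynomials under the order-reversing left-multiplication by $w_\circ$ and may be traced back to the symmetries of the Hecke algebra or to the Kazhdan--Lusztig inversion formula, is where external combinatorial input is needed.
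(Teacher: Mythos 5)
Your opening step is fine and is exactly what the paper does: the preceding corollary gives $(K(w\cdot\lam):\Delta(x\cdot\lam))=\mathrm{rk}_S\,\mathscr{B}^\downarrow(w)_x$, and $w_\circ^*(\mathscr{B}^\downarrow(w))=\mathscr{B}^\uparrow(w_\circ w)$ identifies this with $\mathrm{rk}_S\,\mathscr{B}^\uparrow(w_\circ w)_{w_\circ x}$. After that, however, your argument contains two genuine errors which happen to cancel, so although you land on the correct statement, the proof as written is not valid.

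First, the Kazhdan--Lusztig multiplicity formula is misquoted for the normalization in force here ($\lam$ antidominant, so $y\mapsto y\cdot\lam$ is compatible with the Bruhat order). In this normalization the correct statement is $[\Delta(v\cdot\lam):L(y\cdot\lam)]=P_{w_\circ v,\,w_\circ y}(1)$, not $P_{y,v}(1)$. To see that your version cannot hold, take $y=e$: it would give $[\Delta(v\cdot\lam):L(\lam)]=P_{e,v}(1)$, which equals $2$ for $v=3412$ in type $A_3$; but the antidominant simple occurs exactly once in every Verma module of the block, since $[\Delta(v\cdot\lam):L(\lam)]=\dim_\C\Hom_{\OC}(P(\lam),\Delta(v\cdot\lam))=\dim_\C\bigl(\VV\Delta_A(v\cdot\lam)\otimes_A\C\bigr)=1$, the sheaf $\LL(\VV\Delta_A(v\cdot\lam))$ being a rank-one skyscraper by the proposition in Section 4. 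Second, and more seriously, the ``classical identity'' $P_{w_\circ w,\,w_\circ x}=P_{x,w}$ that carries your final step is false for finite Weyl groups. In $S_4$ take $x=e$ and $w=3412$: then $P_{e,3412}=1+q$, while $P_{w_\circ w,\,w_\circ x}=P_{2143,\,4321}=1$ because the full flag variety $X_{w_\circ}$ is smooth. The true symmetries are $P_{x,w}=P_{x^{-1},w^{-1}}$ and $P_{x,w}=P_{w_\circ x w_\circ,\,w_\circ w w_\circ}$; left multiplication by $w_\circ$ relates $P$ to the \emph{inverse} Kazhdan--Lusztig polynomials via the inversion formula, not to $P$ itself. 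Your two mistakes exactly compensate, which is why the final formula is nevertheless correct.

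Your route (Fiebig's theorem for projectives, BGG reciprocity, and the category-$\OC$ Kazhdan--Lusztig theorem) can be repaired: with the correct formula one gets directly $\mathrm{rk}_S\,\mathscr{B}^\uparrow(w_\circ w)_{w_\circ x}=(P(w_\circ w\cdot\lam):\Delta(w_\circ x\cdot\lam))=[\Delta(w_\circ x\cdot\lam):L(w_\circ w\cdot\lam)]=P_{x,w}(1)$, and no auxiliary Hecke-algebra identity is needed at all. Note also that this is genuinely different from the paper's own proof, which after the $w_\circ^*$ step does not invoke the Kazhdan--Lusztig theorem for category $\OC$: it quotes Theorem 1.6 of \cite{3}, identifying the stalks of the Braden--MacPherson sheaf with the local equivariant intersection cohomology of the corresponding Schubert variety, whose graded ranks are the polynomials $P_{x,w}$.
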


\begin{proof}
Since $w_{\circ}^* \mathscr{B}^\downarrow(w) =\mathscr{B}^\uparrow(w_{\circ}w)$ we get $\mathrm{rk}_S \mathscr{B}^\downarrow(w)_x =\mathrm{rk}_S \mathscr{B}^\uparrow(w_{\circ} w)_{w_\circ x}$ and the result follows since the stalks of BMP-sheaves describe the local equivariant intersection cohomology of the according Schubert variety by \cite{3} Theorem 1.6.
\end{proof}

\begin{remark}
Character formulae for tilting modules were already discovered in \cite{A} for Kac-Moody algebras by using a tilting equivalence on category $\OC$ which interchanges projective with tilting modules. Our approach, however, uses sheaves on moment graphs but only works for the finite dimensional case.
\end{remark}

\section{The Jantzen and Andersen filtrations}

We fix a deformed tilting module $K \in \mathcal{K}_A$ and let $\lam \in \h^{*}$. The composition of homomorphisms induces an $A$-bilinear pairing
\begin{eqnarray*}
	\begin{array}{ccc}
	\Hom_{\OA}(\Delta_A(\lam),K)\times \Hom_{\OA}(K,\nabla_A(\lam)) & \longrightarrow & \Hom_{\OA}(\Delta_A(\lam),\nabla_A(\lam))\cong A\\
	\\
	(\varphi,\psi) & \longmapsto & \psi \circ \varphi
	\end{array}
\end{eqnarray*}

For any $A$-module $H$ we denote by $H^{*}$ the $A$-module $\Hom_A(H,A)$. As in \cite{14} Section 4 one shows that for $A$ a localization of $S$ at a prime ideal $\p$ our pairing is non-degenerated and induces an injective map
$$E=E_A^{\lam}(K): \Hom_{\OA}(\Delta_A(\lam),K) \longrightarrow \left(\Hom_{\OA}(K,\nabla_A(\lam))\right)^{*}$$
of finitely generated free $A$-modules.\\
If we take $A=\C \llbracket v \rrbracket$ the ring of formal power series around the origin on a line $\C\delta \subset \h^{*}$ not contained in any hyper plane corresponding to a reflection of the Weyl group (e.g. $\C \rho$, where $\rho$ is the half sum of positive roots), we get a filtration on $\Hom_{\OA}(\Delta_A(\lam),K)$ by taking the preimages of $\left (\Hom_{\OA}(K,\nabla_A(\lam)) \right )^{*}\cdot v^{i}$ for $i=0,1,2,...$ under $E$.

\begin{definition}[\cite{14}, Definition 4.2.]
Given $K_\C \in \mathcal{K}_\C$ a tilting module of $\OC$ and $K \in \mathcal{K}_{\C \llbracket v \rrbracket}$ a preimage of $K_\C$ under the functor $\cdot \otimes_{\C \llbracket v \rrbracket} \C$, which is possible by Proposition \ref{tilt1} with $S\rightarrow \C \llbracket v \rrbracket$ the restriction to a formal neighborhood of the origin in the line $\C \rho$. Then the image of the filtration defined above under specialization $\cdot \otimes_{\C \llbracket v \rrbracket} \C$ is called the \textit{Andersen filtration} on $\Hom_\g (\Delta(\lam), K_\C)$.
\end{definition}

The Jantzen filtration on a Verma module $\Delta(\lam)$ induces a filtration on the vector space $\Hom_\g(P , \Delta(\lam))$, where $P$ is a projective object in $\OC$. Now consider the embedding $\Delta_{\C \llbracket v \rrbracket}(\lam) \hookrightarrow \nabla_{\C \llbracket v \rrbracket}(\lam)$. Let $P_{\C \llbracket v \rrbracket}$ denote the up to isomorphism unique projective object in $\OC_{\C \llbracket v \rrbracket}$ that maps to $P$ under $\cdot \otimes_{\C \llbracket v \rrbracket}\C$, which is possible by theorem \ref{Fie1}. We then get the same filtration if we first take the preimages of $\Hom_{\OC_{\C \llbracket v \rrbracket}}(P_{\C \llbracket v \rrbracket},\nabla_{\C \llbracket v \rrbracket}(\lam)) \cdot v^{i}$, $i=0,1,2,...$, under the induced inclusion
$$J=J_A^{\lam}(P):\Hom_{\OA}(P_A,\Delta_A(\lam)) \longrightarrow \Hom_{\OA}(P_A,\nabla_A(\lam))$$
for $A=\C \llbracket v \rrbracket$ and then the image of this filtration under the map\\
 $\Hom_{\OA}(P_A,\Delta_A(\lam))\twoheadrightarrow \Hom_{\g}(P,\Delta(\lam))$ induced by $\cdot \otimes_A \C$.

\subsection{Sheaves with a Verma flag}

For $M \in \ZZ_A-\mathrm{mod}^f$ and $\mathcal{I} \subset \V$, we define
$$M_\mathcal{I} := M \cap \bigoplus_{x\in \mathcal{I}} e_x (M \otimes_A Q)$$
and 
$$M^{\mathcal{I}}:= \mathrm{Im} \left( M \rightarrow M\otimes_A Q \rightarrow \bigoplus_{x\in \mathcal{I}} e_x (M \otimes_A Q) \right)$$

\begin{definition}
We say that $M \in \ZZ_A -\mathrm{mod}^f$ admits a Verma flag if the module $M^{\mathcal{I}}$ is free (graded free in case $A=S$) for each F-open subset $\mathcal{I}$.
\end{definition}
Denote the full subcategory of $\ZZ_A(\G)-\mathrm{mod}^f$ consisting of all modules admitting a Verma flag by $\ZZ_A(\G)-\mathrm{mod}^{VF}$. Now for any vertex $x\in \V$ and an $A$-sheaf $\M \in \mathcal{SH}_A(\G)^f$ define
$$\M^{[x]}:= \mathrm{ker}(\M_x \rightarrow \bigoplus_{E \in U_x} \M_E)$$
where $U_x$ is from Notation \ref{not1}. Furthermore, denote by 
$$\M^{x}:= \bigcap_E \mathrm{ker}(\rho_{x,E})$$
the \textit{costalk} of $\M$ at the vertex $x$. Here the intersection runs over all edges $E \in \E$ with $x \in E$.\\
Denote the image of $\ZZ_A(\G)-\mathrm{mod}^{VF}$ under $\LL$ by $\mathcal{C}_A(\G)$. The next proposition gives an explicit description of $\mathcal{C}_A(\G)$

\begin{prop}(\cite{C}, Proposition 2.9)
For $M \in \ZZ_A(\G)-\mathrm{mod}^f$, set $\M= \LL(M)$. Then $M$ admits a Verma flag if and only if $\M$ is flabby and $\M^{[x]}$ is (graded) free for all $x \in \V$.
\end{prop}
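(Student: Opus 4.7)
The plan is to compare both sides via the identification $\Gamma(\mathcal{I},\M) \cong M^{\mathcal{I}}$ valid for any F-open $\mathcal{I}$ (which follows from Proposition~\ref{B} once reflexivity of $M^{\mathcal{I}}$ is available). The local workhorse is the following assertion: whenever $\mathcal{I}_1 \subsetneq \mathcal{I}_2$ are F-open subsets with $\mathcal{I}_2 = \mathcal{I}_1 \cup \{x\}$ and $x$ is chosen so that every edge $E=\{x,y\} \subset \mathcal{I}_2$ lies in $U_x$, the kernel of the restriction $\Gamma(\mathcal{I}_2,\M) \to \Gamma(\mathcal{I}_1,\M)$ consists exactly of tuples supported at $x$ subject to $\rho_{x,E}(s_x) = 0$ for $E \in U_x$, that is $\M^{[x]}$. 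Combined with F-flabbiness this yields a short exact sequence
\begin{equation*}
0 \longrightarrow \M^{[x]} \longrightarrow M^{\mathcal{I}_2} \longrightarrow M^{\mathcal{I}_1} \longrightarrow 0.
\end{equation*}

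For the forward implication, assume $M$ admits a Verma flag, so every $M^{\mathcal{I}}$ is free, hence reflexive; Proposition~\ref{B} then supplies both F-flabbiness of $\M$ and the identification $M^{\mathcal{I}} \cong \Gamma(\mathcal{I},\M)$. To obtain freeness of $\M^{[x]}$, apply the short exact sequence above to a suitably chosen pair $\mathcal{I}_1 \subsetneq \mathcal{I}_2 = \mathcal{I}_1 \cup \{x\}$ of F-open subsets for which only $U_x$-edges appear as constraints. Then $\M^{[x]}$ sits as the kernel of a surjection of free $A$-modules with free cokernel, so the sequence splits and $\M^{[x]}$ is a direct summand of $M^{\mathcal{I}_2}$, hence free.

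For the reverse implication, fix an F-open subset $\mathcal{I}$ and enumerate its vertices $x_1,\dots,x_n$ along a linear extension of the partial order such that each $\mathcal{I}_k := \{x_1,\dots,x_k\}$ is F-open and $x_{k+1}$ satisfies the local property above relative to $\mathcal{I}_k$ (possible by adding, at each stage, an appropriately extremal vertex of $\mathcal{I} \setminus \mathcal{I}_k$). Induct on $k$, simultaneously establishing freeness of $M^{\mathcal{I}_k}$ and the identification $M^{\mathcal{I}_k} \cong \Gamma(\mathcal{I}_k,\M)$. The base case $\mathcal{I}_0 = \emptyset$ is trivial; at the inductive step, flabbiness of $\M$ produces the displayed short exact sequence with quotient $M^{\mathcal{I}_k}$ free by induction and kernel $\M^{[x_{k+1}]}$ free by hypothesis, so the sequence splits and $M^{\mathcal{I}_{k+1}}$ is free.

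The main obstacle is the reverse direction's identification $M^{\mathcal{I}} \cong \Gamma(\mathcal{I},\M)$, since a priori the reflexivity of $M^{\mathcal{I}}$ required by Proposition~\ref{B} is precisely what one is trying to prove. I would handle this by bootstrapping inside the induction: the freeness of $M^{\mathcal{I}_k}$ obtained at each stage supplies the reflexivity needed at the next stage, thereby validating the identification used there. A secondary care-point is arranging the filtration so that only $U_x$-edges appear as constraints in the kernel at each step; this is what pins the kernel to $\M^{[x]}$ rather than to some larger costalk-type object, and it requires a careful choice of linear extension compatible with the F-open condition.
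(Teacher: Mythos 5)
The paper gives no argument of its own for this statement---it is quoted from Abe \cite{C}, Proposition 2.9---so I judge your plan on its own terms. Your overall skeleton (induct over one-vertex extensions of F-open subsets, use flabbiness to obtain a short exact sequence on sections, split it because kernel and quotient are free, bootstrap the reflexivity needed for $M^{\mathcal{I}} \cong \Gamma(\mathcal{I},\M)$) is the standard route. The problem is that your ``local workhorse'' is incompatible with the definitions actually in force. F-open vertex sets are \emph{downward} closed, so if $\mathcal{I}_1 \subset \mathcal{I}_2 = \mathcal{I}_1 \cup \{x\}$ are both F-open, then $x$ is necessarily a maximal vertex of $\mathcal{I}_2$: every neighbour $y$ of $x$ with $\{x,y\} \in D_x$ lies in $\mathcal{I}_2$ and hence in $\mathcal{I}_1$, while no neighbour $y$ with $\{x,y\} \in U_x$ can lie in $\mathcal{I}_1$ (otherwise $x \in \mathcal{I}_1$). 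Consequently the kernel of $\Gamma(\mathcal{I}_2,\M) \rightarrow \Gamma(\mathcal{I}_1,\M)$ is $\bigcap_{E \in D_x} \mathrm{ker}\, \rho_{x,E}$, never $\M^{[x]}$. Your hypothesis that every edge of $x$ inside $\mathcal{I}_2$ lies in $U_x$ can only hold when $D_x = \emptyset$, and even then the kernel is all of $\M_x$ rather than $\M^{[x]}$, because edges leading out of an open subgraph impose no condition at all on its sections; vanishing conditions along $U_x$-edges simply never arise in this restriction map. So the displayed short exact sequence with kernel $\M^{[x]}$ is not available, the ``careful choice of linear extension'' you invoke does not exist, and both directions of your induction rest on this.

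What this conceals is precisely the content of the proposition. With the paper's conventions, the one-vertex induction naturally proves a statement in which the $D_x$-costalks $\bigcap_{E \in D_x} \mathrm{ker}\, \rho_{x,E}$ appear in place of $\M^{[x]}$ (and that part of your argument---splitting, simultaneous freeness of the $M^{\mathcal{I}_k}$---would go through). The bridge from those kernels to the modules $\M^{[x]}$ defined via $U_x$ is the step you never address: it requires either working in the order/openness convention for which the added vertex is \emph{minimal} in the larger open set (so that the constraining edges are exactly $U_x$) and then reconciling that with the paper's definition of a Verma flag via downward closed $\mathcal{I}$, or a separate duality/order-reversal argument showing the two costalk conditions can be interchanged in the presence of flabbiness. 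As written, your proposal proves at best a differently decorated statement. A secondary, repairable point: Proposition \ref{B} is stated under reflexivity of $e_{\mathcal{H}}M$ for \emph{all} F-open $\mathcal{H}$, so the stepwise identification $M^{\mathcal{I}_k} \cong \Gamma(\mathcal{I}_k,\M)$ in your converse direction needs a genuinely local version of that proposition, not only the bootstrap you sketch.
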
 

In \cite{8} section 2.6 Fiebig introduces a duality $D$ on $\ZZ_S-\mathrm{mod}^f$. For $M \in \ZZ_S-\mathrm{mod}^f$ we set 
$$D(M)= \bigoplus_{i\in \Z} \Hom_S^i(M,S)$$
where $\Hom_S^i(M,S) = \Hom_S(M,S[i])$ ($[\cdot]$ grading shift). This induces an equivalence $D : \mathcal{C}_S(\G) \stackrel{\sim}{\rightarrow} \mathcal{C}_S^{op}(\G^{\circ})$, where $\G^{\circ}$ denotes the moment graph $\G$ with reversed order.\\
For $A$ a localization of $S$ at a prime ideal we define $D_A:= \Hom_A(\cdot, A): \mathcal{C}_A(\G) \stackrel{\sim}{\rightarrow} \mathcal{C}_A^{op}(\G^{\circ})$. 

\begin{thm}(\cite{8}, Theorem 6.1 and \cite {D}, Proposition 3.20)
The BMP-sheaves are self-dual: $D \mathscr{B}^\uparrow (x) \cong \mathscr{B}^\uparrow (x)$ for all $x\in \V$.
\end{thm}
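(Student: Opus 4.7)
The plan is to check that $D\mathscr{B}^\uparrow(x)$ satisfies the three defining properties of a BMP sheaf---indecomposability, F-projectivity, and correct support---on the reverse-ordered graph $\G^\circ$, and then invoke the uniqueness part of the BMP theorem from Section 2. Since $D$ is an equivalence with the opposite category, it preserves local endomorphism rings, so indecomposability of $\mathscr{B}^\uparrow(x)$ passes immediately to $D\mathscr{B}^\uparrow(x)$.

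The main technical work is the analysis of $D$ on stalks and costalks. What I would establish is that, for any $\M \in \mathcal{C}_S(\G)$ and vertex $y$, there is a canonical graded isomorphism $(D\M)_y \cong \Hom_S(\M^y,S)$ between the stalk of the dual sheaf and the $S$-dual of the costalk of $\M$, together with a dual statement exchanging the roles of stalks and costalks. Applied to $\M = \mathscr{B}^\uparrow(x)$, the fact that the costalks of a BMP sheaf satisfy $(\mathscr{B}^\uparrow(x))^y = 0$ unless $y \geq x$ and $(\mathscr{B}^\uparrow(x))^x \cong S$ (up to shift) translates into exactly the support and stalk-at-$x$ conditions required for the BMP sheaf on $\G^\circ$ supported at $x$. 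At the same time, the graded freeness of stalks of $D\mathscr{B}^\uparrow(x)$ follows from the corresponding freeness of costalks.

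For F-projectivity of $D\mathscr{B}^\uparrow(x)$ on $\G^\circ$ one needs F-flabbiness, generation by global sections, and the edge-isomorphism condition on edges running upwards in $\G^\circ$. Flabbiness on $\G^\circ$ comes from the Verma-flag description recalled before the theorem, using that $D$ exchanges $\mathcal{C}_S(\G)$ with $\mathcal{C}_S(\G^\circ)$ (modulo opposite direction of morphisms); generation by global sections is preserved by $\LL\circ\Gamma$; and the edge condition is obtained by transporting the edge condition for $\mathscr{B}^\uparrow(x)$ across the stalk/costalk exchange of the previous step. The principal obstacle is the setup of the stalk/costalk isomorphism: one has to work with an explicit sheaf-theoretic description of $D$, either imported from \cite{8} or reconstructed via $\LL$ and $\Gamma$, and verify compatibility with the restriction maps on both sides. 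Once that is done, the three BMP conditions follow by a direct check and uniqueness concludes the argument.
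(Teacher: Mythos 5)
Your strategy aims at the wrong target, and its key intermediate claim is false. If you could verify that $D\mathscr{B}^\uparrow(x)$ is an indecomposable F-projective sheaf on the reverse-ordered graph $\G^{\circ}$ attached to the vertex $x$, the uniqueness part of the BMP theorem would give $D\mathscr{B}^\uparrow(x)\cong\mathscr{B}^\downarrow(x)$, which is not the assertion $D\mathscr{B}^\uparrow(x)\cong\mathscr{B}^\uparrow(x)$ and is in fact false except in degenerate cases: a BMP sheaf on $\G^{\circ}$ attached to $x$ is supported on $\{y\mid y\leq x\}$, while your own stalk/costalk exchange places the stalks of $D\mathscr{B}^\uparrow(x)$ on $\{y\mid y\geq x\}$, since the costalks of $\mathscr{B}^\uparrow(x)$ live there. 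So your support computation contradicts the identification you want to make; at best it could point to $\mathscr{B}^\downarrow(z)$ for $z$ a maximal vertex of the support, not to the vertex $x$. Concretely, for a subgeneric graph with two vertices $x<y$ one has $\Gamma(\mathscr{B}^\uparrow(x))=\ZZ$ and $D\ZZ\cong\ZZ$ up to shift, so $D\mathscr{B}^\uparrow(x)$ has full support, whereas $\mathscr{B}^\downarrow(x)$ is the skyscraper at $x$.

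The deeper problem is that $D\mathscr{B}^\uparrow(x)$ is not F-projective on $\G^{\circ}$ in general, so the whole verification cannot go through. The duality only gives $D:\mathcal{C}_S(\G)\rightarrow\mathcal{C}_S^{op}(\G^{\circ})$, i.e.\ the dual of a Verma-flag object is $\downarrow$-flabby with free modules $\M^{[x]}$; $\downarrow$-projectivity is strictly stronger. Under $\VV$, the indecomposable $\downarrow$-projective sheaves correspond to tilting modules, whereas $\mathscr{B}^\uparrow(x)$ corresponds to the projective $P_A(x\cdot\lam)$, which is not tilting unless $x\cdot\lam$ is antidominant. For instance, in an $S_3$-block take $x=s$ a simple reflection: then $\mathscr{B}^\uparrow(s)_e=0$, hence $\mathscr{B}^\uparrow(s)_E=0$ for $E=\{e,s\}$, and the edge-isomorphism $\mathscr{B}^\uparrow(s)_s/\alpha_E\mathscr{B}^\uparrow(s)_s\rightarrow\mathscr{B}^\uparrow(s)_E$ required for F-projectivity on $\G^{\circ}$ fails; since the theorem asserts $D\mathscr{B}^\uparrow(s)\cong\mathscr{B}^\uparrow(s)$, the dual cannot be $\downarrow$-projective either. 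Note also that the paper gives no proof of this theorem: it is quoted from Fiebig (Theorem 6.1 of the Coxeter-categories paper) and Shan--Varagnolo--Vasserot, where self-duality is obtained not by checking BMP axioms for the dual but by showing that translation (Bott--Samelson) objects commute with $D$ up to shift and then splitting off indecomposable summands. A direct argument along your lines would have to show that $D\mathscr{B}^\uparrow(x)$ is F-projective for the \emph{original} order with stalk $S$ at $x$ and support above $x$; that is exactly the nontrivial content, and it does not follow from the formal properties of $D$ you invoke.
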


Recall the pull-back functor of section \ref{BMP}
$$w_{\circ}^*: \mathcal{SH}_A(\G^{\circ})^f \longrightarrow \mathcal{SH}_A(\G)^f$$

\begin{lemma}
We have $w_{\circ}^*(\mathcal{C}_A(\G^{\circ}))=\mathcal{C}_A(\G)$.
\end{lemma}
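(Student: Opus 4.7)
The plan is to check that the structural properties defining $\mathcal{C}_A$ --- lying in the essential image of $\LL$, F-flabbiness, and graded freeness of all $\M^{[x]}$, which together characterize Verma-flag objects by the proposition just cited --- are each preserved under the pull-back $w_{\circ}^*$, using only that $w_{\circ}: \G \to \G^{\circ}$ is an isomorphism of ordered moment graphs.

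The main preliminary step is a careful bookkeeping of order-reversals. By definition of an isomorphism of ordered moment graphs, $w_{\circ}$ preserves edges and labels and transports the order of $\G$ to the reversed order on $\G^{\circ}$; equivalently, it is a self-anti-automorphism of the poset $(\V, \leq)$. Two order-reversals therefore appear when we compare $\G$ and $\G^{\circ}$ via $w_{\circ}$ --- one from $\G \leadsto \G^{\circ}$ and one from $w_{\circ}$ itself --- and they cancel. In particular, $w_{\circ}$ induces a bijection between $\uparrow$-open subgraphs of $\G$ and F-open subgraphs of $\G^{\circ}$, and for every $x \in \V$ it identifies $U_x$ in $\G$ with $U_{w_{\circ}x}$ in $\G^{\circ}$. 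This is the one place where the argument asks the reader to keep their orientation straight; beyond that it is formal.

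Once the bookkeeping is in place, each condition transfers directly. F-flabbiness is preserved because the restriction map $\Gamma(w_{\circ}^*\mathcal{N}) \to \Gamma(\mathcal{H}, w_{\circ}^*\mathcal{N})$ for an $\uparrow$-open $\mathcal{H} \subset \G$ is tautologically the restriction map of $\mathcal{N}$ along the F-open subgraph $w_{\circ}(\mathcal{H}) \subset \G^{\circ}$; graded freeness of $\M^{[x]}$ is preserved because $(w_{\circ}^*\mathcal{N})^{[x]}$ on $\G$ is canonically isomorphic to $\mathcal{N}^{[w_{\circ}x]}$ on $\G^{\circ}$ under the bijection of $U$-sets above; and the essential image of $\LL$ is preserved because $w_{\circ}^*$ intertwines the adjoint pair $(\LL, \Gamma)$ on the two sides. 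This yields $w_{\circ}^*(\mathcal{C}_A(\G^{\circ})) \subset \mathcal{C}_A(\G)$, and applying the same argument to $w_{\circ}^{-1} = w_{\circ}$ gives the reverse inclusion. The only real obstacle, to the extent there is one, is convincing oneself that the two order-reversals really do cancel; the rest is diagram unfolding.
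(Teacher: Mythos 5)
Your argument is correct and follows essentially the same route as the paper: transfer F-flabbiness via the identification $\Gamma(\mathcal{I}, w_{\circ}^*\M) \cong \Gamma(w_{\circ}\mathcal{I}, \M)$ for the order-exchanged open subgraphs, and use $(w_{\circ}^*\M)^{[x]} = \M^{[w_{\circ}x]}$ for the freeness condition, then invoke the characterization of $\mathcal{C}_A$. Your extra remarks (that $w_{\circ}^*$ is compatible with $\LL$, so the image condition is preserved, and that the reverse inclusion follows by applying the same argument to $w_{\circ}^{-1}=w_{\circ}$) only make explicit what the paper leaves implicit.
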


\begin{proof}
Let $\M \in \mathcal{C}_A(\G^{\circ})$. We have to show that $w_{\circ}^*(\M)$ is $\uparrow$-flabby and $(w_{\circ}^*(\M))^{[x]}$ is free over $A$ for $x\in \V$.\\
Let $\mathcal{I}$ be $\downarrow$-open. Then $w_{\circ}\mathcal{I}$ is $\uparrow$-open and we get that
$$\Gamma(\M)\cong \Gamma(w_{\circ}^*(\M)) \rightarrow \Gamma(\mathcal{I}, w_{\circ}^*(\M)) \cong \Gamma(w_{\circ} \mathcal{I}, \M)$$
is surjective since $\M$ is flabby.\\
As $(w_{\circ}^*(\M))^{[x]}=\M^{[w_{\circ}x]}$ the claim follows.
\end{proof}

We get an equivalence of categories
$$w_\circ ^*:\mathcal{C}_A(\G^{\circ}) \longrightarrow \mathcal{C}_A(\G)$$
with the properties: $w_\circ ^*(\mathscr{B}^\uparrow (x)\otimes_S A) \cong \mathscr{B}^\downarrow (w_\circ x)\otimes_S A$ and $w_\circ ^*(\mathscr{V}_A (x)) \cong \mathscr{V}_A (w_\circ x)$. Thus the composition 
$$F_A= (w_\circ ^*)^{op} \circ D_A : \mathcal{C}_A(\G) \longrightarrow \mathcal{C}_A(\G)^{op}$$
is an equivalence with $F_A(\mathscr{B}^\uparrow (x)\otimes_S A) \cong \mathscr{B}^\downarrow (w_\circ x)\otimes_S A$ and $F(\mathscr{V}_A (x)) \cong \mathscr{V}_A (w_\circ x)$ which are isomorphisms of graded sheaves if $A=S$.\\

\begin{thm}(\cite{7}, Theorem 7.1.)
The functor $\VV : \mathcal{M}_{A,\lam}   \rightarrow \mathcal{C}_A(\G)$ is an equivalence of categories for $A=S_{(0)}$.
\end{thm}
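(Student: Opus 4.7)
I interpret the statement through the equivalence $\LL : \ZZ_A(\G)\text{-mod}^{VF} \rightleftarrows \mathcal{C}_A(\G) : \Gamma$ implicit in Lemma \ref{adj}, so it is enough to show that $\VV : \mathcal{M}_{A,\lam} \to \ZZ_A(\G)\text{-mod}^{VF}$ is an equivalence of categories. The plan is to verify this in three stages: target identification, full faithfulness on $\mathcal{M}_{A,\lam}$, and essential surjectivity.

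For the target, I would perform a dévissage along a Verma flag $0 = M_0 \subset \cdots \subset M_n = M$ of $M \in \mathcal{M}_{A,\lam}$. Since $\Ext^1_{\OA}(\Delta_A(\mu), \nabla_A(\nu)) = 0$ by Proposition \ref{Soe1}, and since projectivity of $P_A(\lam)$ makes $\VV = \Hom_{\OA}(P_A(\lam), \cdot)$ exact, $\VV M$ inherits a filtration with subquotients $\VV \Delta_A(w_i \cdot \lam)$, whose localizations are the skyscrapers $\mathscr{V}_A(w_i)$ by the earlier proposition. Together with the exact description of $M^\mathcal{I}$ available from the block decomposition, this forces $(\VV M)^\mathcal{I}$ to be free over $A$ of rank $\#\{i \mid w_i \in \mathcal{I}\}$ for every F-open $\mathcal{I}$, which is precisely the Verma-flag condition for $\ZZ$-modules.

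Full faithfulness begins from the tautological identification $\End_{\OA}(P_A(\lam)) \cong \ZZ$ from the theorem of Fiebig cited earlier: this gives a functorial isomorphism $\Hom_{\OA}(P_A(\lam), N) \xrightarrow{\sim} \Hom_\ZZ(\ZZ, \VV N) = \VV N$. Every indecomposable projective in $\OC_{A,\lam}$ is a summand of $P_A(\lam)^{\oplus k}$, and by Theorem \ref{Fie2} every $M \in \mathcal{M}_{A,\lam}$ admits a presentation $P_1 \to P_0 \to M \to 0$ with $P_0, P_1$ projective and the kernel again having a Verma flag (since the class of Verma-flag modules is closed under kernels of surjections from projectives whose image has a Verma flag). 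A five-lemma argument combined with exactness of $\VV$ on $\mathcal{M}_{A,\lam}$ propagates full faithfulness from $P_A(\lam)$ to its summands, and then to all of $\mathcal{M}_{A,\lam}$.

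The essential surjectivity is the main obstacle. Given $N \in \ZZ_A(\G)\text{-mod}^{VF}$, I would induct on the cardinality of the support of $\LL(N)$. If the support is a single vertex $w$ then $\LL(N) \cong \mathscr{V}_A(w)^{\oplus r} \cong \LL \VV \Delta_A(w \cdot \lam)^{\oplus r}$, and we are done. In general, choose $w$ maximal in the support; the Verma-flag condition on $\LL(N)$ produces a short exact sequence $0 \to \mathscr{V}_A(w)^{\oplus r} \to \LL(N) \to \mathcal{N}' \to 0$ in $\mathcal{C}_A(\G)$ with $\mathcal{N}'$ of strictly smaller support, and by induction $\mathcal{N}' \cong \LL \VV M'$ for some $M' \in \mathcal{M}_{A,\lam}$. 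The delicate step is to lift the extension class from $\Ext^1_\ZZ(\Gamma(\mathcal{N}'), \Gamma(\mathscr{V}_A(w))^{\oplus r})$ to $\Ext^1_{\OA}(M', \Delta_A(w \cdot \lam)^{\oplus r})$, producing an $M \in \mathcal{M}_{A,\lam}$ with $\VV M \cong N$. The key technical point, which is the heart of the proof, is that these two $\Ext^1$ groups coincide under $\VV$; this amounts to saying that $\VV$ not only matches Hom-spaces between Verma-flag modules but also matches their first extensions, and it must be established by comparing the respective long exact sequences through the projective resolution of $M'$ used in the full faithfulness step.
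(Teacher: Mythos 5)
This statement is not proved in the paper at all: it is quoted verbatim from Fiebig \cite{7}, Theorem 7.1, so the only meaningful comparison is with Fiebig's proof, and your route differs from it in a way that exposes two genuine gaps. The first is in your full faithfulness step. You claim that every indecomposable projective of $\OC_{A,\lam}$ is a direct summand of $P_A(\lam)^{\oplus k}$; this is false. Since $\End_{\OC_A}(P_A(\lam))\cong\ZZ$ is a local ring (for $A=S_{(0)}$ and a connected block), $P_A(\lam)$ is indecomposable and Krull--Schmidt shows the only indecomposable summand of $P_A(\lam)^{\oplus k}$ is $P_A(\lam)$ itself; already for $\mathfrak{sl}_2$ the other indecomposable projective $P_A(s\cdot\lam)=\Delta_A(s\cdot\lam)$ is not obtained this way. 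Consequently full faithfulness of $\VV$ on $\mathcal{M}_{A,\lam}$ is not a formal consequence of $\End_{\OC_A}(P_A(\lam))\cong\ZZ$ plus a five-lemma argument: it is a deformed Struktursatz-type statement and is exactly the hard content of the theorem you are citing it to prove.

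The second gap is the ``key technical point'' in your essential surjectivity step, namely that $\VV$ identifies $\Ext^1_{\OA}(M',\Delta_A(w\cdot\lam)^{\oplus r})$ with the relevant $\Ext^1$ on the combinatorial side. This is asserted, not proved, and the mechanism you propose cannot deliver it: $\VV$ does not send projectives of $\OC_{A,\lam}$ to projective $\ZZ$-modules (e.g.\ $\VV P_A(x\cdot\lam)$ is the module of global sections of a Braden--MacPherson sheaf), so applying $\VV$ to a projective resolution and comparing long exact sequences does not compute $\Ext^1_\ZZ$; moreover the comparison would in any case have to be made for the exact structure on $\ZZ_A(\G)\text{-mod}^{VF}$, not for arbitrary $\ZZ$-module extensions. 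Fiebig's actual argument avoids both issues: using Lemma \ref{Fie4} he localizes at height-one primes $\p\subset A$, where the block splits into generic and subgeneric pieces that can be computed explicitly (cf.\ Proposition \ref{Fie5}), and he recovers a module with Verma flag, respectively the corresponding $\ZZ$-module, as the intersection $\bigcap_{\p}M\otimes_A A_\p$ inside $M\otimes_A Q$; full faithfulness and essential surjectivity are then checked after such localizations rather than by lifting extension classes. If you want a self-contained proof you would need to either reproduce that localization argument or supply an independent proof of the deformed Struktursatz and of the $\Ext^1$-comparison, neither of which your outline currently contains.
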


Now we can lift the functor $F_A$ via Fiebig's equivalence to a functor $T_A$ on the representation theoretic side such that the following diagram of functors commutes:

\begin{eqnarray*}
	\begin{CD}
   \mathcal{M}_{A,\lam}   @> \VV >> \mathcal{C}_A(\G)\\
   @VT_A VV @VVF_A  V\\
    \mathcal{M}^{op}_{A,\lam} @> \VV ^{op} >> \mathcal{C}_A(\G)^{op}
	\end{CD}
\end{eqnarray*}

\begin{thm}
Let $\lam\in \h^{*}$ be antidominant, $x,y \in \W$ and $w_\circ \in \W$ the longest element. Denote by $A=S_{(0)}$ the localization of $S$ at $0$. There exists an isomorphism $L=L_A(x,y)$ which makes the diagram

{\small
\begin{eqnarray*}\label{eqn:DiaS}
	\begin{CD}
   \Hom_{\OC_A}(P_A(x \cdot \lam),\Delta_A(y \cdot \lam))   @>J>> \Hom_{\OC_A}(P_A(x \cdot \lam),\nabla_A(y \cdot \lam))\\
   @VVT V @VVL V\\
   \Hom_{\OC_A}(\Delta_A(w_\circ y \cdot \lam),K_A(w_\circ x \cdot \lam)) @>E >> \left(\Hom_{\OC_A}(K_A(w_\circ x \cdot \lam),\nabla_A(w_\circ y \cdot \lam))\right)^{*}
	\end{CD}
\end{eqnarray*}}
commutative. Here $J=J_A^{y\cdot \lam}(P_A(x \cdot\lam))$ and $E=E_A^{w_\circ y \cdot \lam}(K_A(w_\circ x \cdot \lam))$ denote the inclusions defined above and $T=T_A$ denotes the isomorphism induced by the functor $T_A$ from above.
\end{thm}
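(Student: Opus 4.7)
The plan is to transfer the whole square to the moment-graph side via $\VV$, construct $L$ using the sheaf-theoretic duality $D_A$ composed with $w_\circ^*$, and verify commutativity by the naturality of $F_A$ with respect to the canonical inclusion $\Delta\hookrightarrow\nabla$.

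First I would reformulate $J$ and $E$ in a uniform way. Both maps are built from the canonical morphism $\iota_\mu\colon\Delta_A(\mu)\hookrightarrow\nabla_A(\mu)$, which by Proposition~\ref{Soe1} corresponds to $1\in A$ under the identification $\Hom_{\OA}(\Delta_A(\mu),\nabla_A(\mu))\cong A$. Explicitly, $J(h)=\iota_{y\cdot\lam}\circ h$, whereas $E(f)$ is the $A$-linear functional $g\mapsto g\circ f\in\Hom_{\OA}(\Delta_A(w_\circ y\cdot\lam),\nabla_A(w_\circ y\cdot\lam))\cong A$. In this formulation the commutativity of the square reduces to a single bilinear identity involving $\iota$.

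Second, I would apply $\VV$ and use Fiebig's equivalence $\VV\colon\mathcal{M}_{A,\lam}\to\mathcal{C}_A(\G)$. The key observation is that $\VV\Delta_A(y\cdot\lam)$ and $\VV\nabla_A(y\cdot\lam)$ both localise to the skyscraper sheaf $\mathscr{V}_A(y)$ but realise distinct $\ZZ$-lattices in the generic fibre $e_y(\ZZ\otimes_A Q)$; the map $\VV\iota_{y\cdot\lam}$ is the inclusion of the Verma lattice into the nabla lattice. Under $D_A$ these two lattices are interchanged, so that $w_\circ^* D_A \VV\Delta_A(y\cdot\lam)\cong\VV\nabla_A(w_\circ y\cdot\lam)$ and symmetrically $w_\circ^* D_A \VV\nabla_A(y\cdot\lam)\cong\VV\Delta_A(w_\circ y\cdot\lam)$.

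Third, I would define $L$ as the composition
\begin{align*}
\Hom_{\OA}(P_A(x\cdot\lam),\nabla_A(y\cdot\lam))
&\stackrel{\VV}{\cong}\Hom_{\ZZ}(\VV P_A(x\cdot\lam),\VV\nabla_A(y\cdot\lam))\\
&\stackrel{F_A}{\longrightarrow}\Hom_{\ZZ}(F_A\VV\nabla_A(y\cdot\lam),F_A\VV P_A(x\cdot\lam))\\
&\stackrel{\sim}{\longrightarrow}\Hom_{\OA}(K_A(w_\circ x\cdot\lam),\nabla_A(w_\circ y\cdot\lam))^*,
\end{align*}
where the dualisation $(\cdot)^*$ in the target records the contravariant nature of $D_A=\Hom_A(\cdot,A)$: at the generic point both Hom spaces become free $A$-modules of the same finite rank and their integral structures are exchanged by $D_A$. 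Commutativity of the diagram is then a naturality statement: for $h\in\Hom_{\OA}(P_A(x\cdot\lam),\Delta_A(y\cdot\lam))$ and $g\in\Hom_{\OA}(K_A(w_\circ x\cdot\lam),\nabla_A(w_\circ y\cdot\lam))$, the scalars $(L\circ J(h))(g)$ and $(E\circ T(h))(g)$ both equal the composition $g\circ T(h)\in A$, which follows from the naturality of $F_A$ with respect to $\VV\iota_{y\cdot\lam}$.

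The main obstacle I anticipate is establishing the identification $w_\circ^*D_A\VV\Delta\cong\VV\nabla$, i.e., that the sheaf-level duality $D_A$ intertwines the module-theoretic duality $d$ on the nabla-lattice side. Since the category $\mathcal{C}_A(\G)$ is defined via Verma flags, this requires either extending $D_A$ to an appropriate larger category containing the nabla lattices, or verifying the isomorphism at the generic point and then descending integrally using reflexivity arguments as in Proposition~\ref{B} and Corollary~\ref{Soe2}. Once this identification is in place, the rest of the commutativity check is a formal naturality argument.
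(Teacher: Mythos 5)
Your overall strategy---transporting the square through $\VV$ and defining $L$ by means of $F_A=(w_\circ^*)^{op}\circ D_A$---is in the spirit of what the paper intends (its own proof is only a reference to Theorem 4.2 of \cite{11}, with the tilting functor replaced by the functor $T_A$ lifted from $F_A$). But as written your construction has two genuine gaps. First, the isomorphism $\Hom_{\OC_A}(P_A(x\cdot\lam),\nabla_A(y\cdot\lam))\cong\Hom_{\ZZ}(\VV P_A(x\cdot\lam),\VV\nabla_A(y\cdot\lam))$ is not licensed by Fiebig's equivalence $\VV\colon\mathcal{M}_{A,\lam}\to\mathcal{C}_A(\G)$, because $\nabla_A(y\cdot\lam)$ (and likewise $\nabla_A(w_\circ y\cdot\lam)$ on the tilting side) does not admit a Verma flag and hence is not an object of $\mathcal{M}_{A,\lam}$; you need a separate Strukturfunktor-type statement for homomorphisms from a module with Verma flag into a module with nabla flag, which is where Proposition \ref{Soe1}, Corollary \ref{Soe2} and reduction to generic and subgeneric localizations (or Soergel's arguments in \cite{14}) must enter. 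Second, the last arrow in your definition of $L$, namely the identification of $\Hom_{\ZZ}\bigl(F_A\VV\nabla_A(y\cdot\lam),F_A\VV P_A(x\cdot\lam)\bigr)$ with $\left(\Hom_{\OC_A}(K_A(w_\circ x\cdot\lam),\nabla_A(w_\circ y\cdot\lam))\right)^{*}$ \emph{compatibly with the composition pairing that defines} $E$, is precisely the nontrivial content of the theorem; you assert it ("records the contravariant nature of $D_A$") rather than construct it, and your final check that $(L\circ J(h))(g)=g\circ T(h)$ presupposes exactly this compatibility.

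A smaller but telling point: what you call the key observation---that $D_A$ interchanges the Verma and nabla "lattices"---carries no content at the level of objects, since $\VV\Delta_A(y\cdot\lam)$ and $\VV\nabla_A(y\cdot\lam)$ are abstractly isomorphic rank-one modules supported at the single vertex $y$ (both localize to $\mathscr{V}_A(y)$); the information sits entirely in the morphism $\VV(\iota_{y\cdot\lam})$, i.e.\ in the scalar $c\in A$ (well defined up to a unit) by which it acts after choosing bases. What commutativity really requires is (i) functoriality of $F_A$ applied to $\iota_{y\cdot\lam}\circ h$ (this is functoriality, not "naturality"---$F_A$ is a functor, not a natural transformation), (ii) the observation that $F_A(\VV\iota_{y\cdot\lam})$ is again multiplication by $c$ up to a unit, which does hold since $D_A=\Hom_A(\cdot,A)$ is $A$-linear on free modules and $w_\circ^*$ only relabels vertices, the unit being absorbable into the choice of $L$, and (iii) the pairing compatibility above together with the Hom-space identifications of the first gap. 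Points (i) and (ii) are fine, but without (iii) and the Strukturfunktor input the argument does not close; these are exactly the steps carried out in \cite{11} (there for the tilting functor) to which the paper's proof appeals.
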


\begin{proof}
The proof is essentially the same as the one for Theorem 4.2. in \cite{11}, where we prove a similar result for $T_A$ the tilting functor.
\end{proof}
Denote by $T_\C:\Hom_\g(P(x \cdot \lam),\Delta(y \cdot \lam)) \stackrel{\sim}{\rightarrow} \Hom_\g(\Delta(w_\circ y \cdot \lam),K(w_\circ x \cdot \lam))$ the isomorphism we get from $T_A \otimes_A \mathrm{id}_\C$ after base change. The next corollary now follows in the same way as Corollary 4.3. in \cite{11}.
\begin{cor}
The isomorphism 
$$T_\C:\Hom_\g(P(x \cdot \lam),\Delta(y \cdot \lam)) \stackrel{\sim}{\rightarrow} \Hom_\g(\Delta(w_\circ y \cdot \lam),K(w_\circ x \cdot \lam))$$
identifies the filtration induced by the Jantzen filtration with the Andersen filtration.
\end{cor}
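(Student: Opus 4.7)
The plan is to mirror the argument of \cite[Theorem 4.2]{11}, substituting the abstract equivalence $T_A$ (obtained by lifting the sheaf duality $F_A$ through Fiebig's equivalence $\VV$) for the tilting functor used there. After base change to the fraction field $Q$ of $A$, the modules $\Delta_Q(y\cdot\lam)$ and $\nabla_Q(y\cdot\lam)$ coincide (the generic situation renders both simple), so $J\otimes_A Q$ and $E\otimes_A Q$ are isomorphisms; combined with the isomorphism $T\otimes_A Q$, this determines a unique candidate $L_Q$ that makes the diagram commute after base change. It therefore suffices to show that $L_Q$ restricts to an $A$-linear isomorphism between the prescribed $A$-lattices.

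The next step is to encode both the Jantzen and the Andersen inclusions sheaf-theoretically. Post-composition with the canonical embedding $\iota:\Delta_A(y\cdot\lam)\hookrightarrow\nabla_A(y\cdot\lam)$ realizes $J$, via $\VV$, as a specific morphism $\VV\iota:\VV\Delta_A(y\cdot\lam)\to\VV\nabla_A(y\cdot\lam)$ of $\ZZ$-modules (both rank-one free over $A$, both localizing to $\mathscr{V}_A(y)$). The Andersen map $E$ arises from the composition pairing $\Hom(\Delta,K)\times\Hom(K,\nabla)\to\Hom(\Delta,\nabla)\cong A$, which is again dictated by $\VV\iota$ once the canonical trivialization of $\Hom_{\OC_A}(\Delta_A(\mu),\nabla_A(\mu))\cong A$ is fixed. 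This places both inclusions on the same sheaf-theoretic footing, and commutativity of the diagram reduces to showing that $F_A=(w_\circ^*)^{\mathrm{op}}\circ D_A$ carries one to the other.

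The final step applies $D_A=\Hom_A(\cdot,A)$ to post-composition with $\VV\iota$, obtaining pre-composition with $D_A\VV\iota$ on the $A$-duals of the relevant Hom-spaces, and then transports the result through $w_\circ^*$. Using the self-duality $D_A\mathscr{B}^\uparrow(x)\cong\mathscr{B}^\uparrow(x)$, the identification $w_\circ^*\mathscr{B}^\uparrow(x)\cong\mathscr{B}^\downarrow(w_\circ x)$, and $F_A\mathscr{V}_A(y)\cong\mathscr{V}_A(w_\circ y)$, one reads off a canonical $L$ on the right-hand column of the diagram. Commutativity is then a formal consequence of the functoriality of $F_A$ applied to $\VV\iota$, lifted back through $\VV$.

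The main obstacle will be a careful normalization: one must verify that the sheaf-theoretic dualization $D_A(\VV\iota)$ realizes the Andersen pairing \emph{on the nose}, not merely up to an unspecified unit in $A$. This reduces to choosing compatible generators of the rank-one $A$-modules $\Hom_{\OC_A}(\Delta_A(\mu),\nabla_A(\mu))$ and $\Hom_{\ZZ}(\VV\Delta_A(\mu),\VV\nabla_A(\mu))$ so that the canonical embedding $\iota$ corresponds to the canonical generator of the sheaf-side Hom. Once this normalization is in place, the diagram commutes by functoriality, and $L$ is automatically an isomorphism of $A$-modules because $F_A$ is an equivalence of categories and both source and target are finitely generated free $A$-modules of the same rank.
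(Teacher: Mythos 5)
There is a genuine gap: what you have sketched is a proof of the \emph{preceding theorem} (the construction of $L$ and the commutativity of the square relating $J$, $E$, $T$, $L$ over the deformation ring), not of the corollary itself. The corollary's actual content is the passage from that commutative diagram to the statement about filtrations, and this step is entirely absent from your argument: nowhere do you mention the ring $\C\llbracket v\rrbracket$, the preimage filtrations by $v^i$, or the specialization $\cdot\otimes_{\C\llbracket v\rrbracket}\C$ that defines the Jantzen-induced and Andersen filtrations on $\Hom_\g(P(x\cdot\lam),\Delta(y\cdot\lam))$ and $\Hom_\g(\Delta(w_\circ y\cdot\lam),K(w_\circ x\cdot\lam))$. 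The paper disposes of the corollary by the argument of Corollary 4.3 in \cite{11}: one base-changes the commutative diagram from $A=S_{(0)}$ to $\C\llbracket v\rrbracket$ (this uses the base-change isomorphisms for the relevant Hom-spaces, i.e.\ Corollary \ref{Soe2}, Theorem \ref{Fie2}(3) and Proposition \ref{tilt1} to lift $P$ and $K$), observes that since $L$ is an isomorphism of finitely generated free $\C\llbracket v\rrbracket$-modules it carries $v^i\cdot\Hom(P_A,\nabla_A(y\cdot\lam))$ onto $v^i\cdot\bigl(\Hom(K_A,\nabla_A(w_\circ y\cdot\lam))\bigr)^{*}$ for every $i$, so that $T$ matches the preimage filtrations under $J$ and $E$ step by step, and finally applies $\cdot\otimes_{\C\llbracket v\rrbracket}\C$ to identify the images of these filtrations, which are by definition the Jantzen-induced and the Andersen filtration. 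None of this bookkeeping appears in your proposal, so the claimed statement is never actually derived.

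A secondary remark: the normalization issue you single out as the main obstacle (getting the dualized pairing ``on the nose'' rather than up to a unit of $A$) is irrelevant for the corollary. Rescaling $L$ by a unit does not change the preimages of the submodules $v^i\cdot(\cdot)$, so the filtration comparison is insensitive to it; what the corollary needs is only that $L$ is an isomorphism compatible with base change, together with the $v$-adic and specialization arguments described above. Conversely, the part you do develop (constructing $L_Q$ over the quotient field and checking it preserves the $A$-lattices, via $F_A=(w_\circ^*)^{\mathrm{op}}\circ D_A$ and the self-duality of BMP-sheaves) is in the spirit of the proof of the theorem, which the paper attributes to \cite{11}; if you intend it as such, it should be presented there, and the corollary still requires its own (short but necessary) argument.
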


Now we consider $\C \cong S/S\h$ as a simple graded $S$-module living in degree $0$. The map 
$$T_\C:\Hom_\g(P(x \cdot \lam),\Delta(y \cdot \lam)) \stackrel{\sim}{\rightarrow} \Hom_\g(\Delta(w_\circ y \cdot \lam),K(w_\circ x \cdot \lam))$$
can then be identified with 
$$F_S \otimes \mathrm{id}_\C: \Hom_{\mathcal{C}_S(\G)}(\mathscr{B}^\uparrow (x), \mathscr{V}_S (y))\otimes_S \C \stackrel{\sim}{\rightarrow} \Hom_{\mathcal{C}_S(\G)} (\mathscr{V}_S (w_{\circ} y),\mathscr{B}^\downarrow (w_{\circ} x))\otimes_S \C$$
which is now an isomorphism of graded $\C$-vector spaces. But using the proof of proposition 7.1. (3) in \cite{6} this isomorphism becomes a graded isomorphism between certain costalks of the Braden-MacPherson sheaves, namely an isomorphism
$$\varphi: \mathscr{B}^\uparrow (x)^y \otimes_S \C \stackrel{\sim}{\rightarrow} \mathscr{B}^\downarrow (w_{\circ} x)^{w_{\circ}y}\otimes_S \C$$

In \cite{14} Soergel shows that the filtration on $\mathscr{B}^\downarrow (w_{\circ} x)^{w_{\circ}y}\otimes_S \C$ induced by the Andersen filtration coincides with the grading filtration we get from the grading on the Braden-MacPherson sheaf $\mathscr{B}^\downarrow (w_{\circ} x)$. Since the graded isomorphism $\varphi$ interchanges the filtration on $\mathscr{B}^\uparrow (x)^y \otimes_S \C$ induced by the Jantzen filtration with the filtration on $\mathscr{B}^\downarrow (w_{\circ} x)^{w_{\circ}y}\otimes_S \C$ induced by the Andersen filtration, we get that the Jantzen filtration coincides with the grading filtration coming from the grading on the Braden-MacPherson sheaf $\mathscr{B}^\uparrow (x)$.

\section*{Acknowledgements}

I would like to thank my supervisor Peter Fiebig for many helpful discussions and for sharing his deep insight in the theory of sheaves on moment graphs.

\bibliographystyle{amsplain}

\end{document}